\newtheorem{theorem}{Theorem}
\theoremstyle{plain}
\newtheorem{acknowledgement}{Acknowledgement}
\newtheorem{corollary}{Corollary}
\newtheorem{definition}{Definition}
\newtheorem{lemma}{Lemma}
\newtheorem{proposition}{Proposition}
\newtheorem{remark}{Remark}
\DeclareMathOperator{\Div}{div}
 \numberwithin{equation}{section}
 \numberwithin{theorem}{section}
 \numberwithin{proposition}{section}
 \numberwithin{remark}{section}
 \numberwithin{definition}{section}
 \numberwithin{lemma}{section}
 \numberwithin{corollary}{section}
 \numberwithin{example}{section}
 \numberwithin{claim}{section}
\begin{document}
\title[Acoustics of almost periodic porous media ]{Asymptotic behaviour of
viscoelastic composites with almost periodic microstructures}
\author{Jean Louis Woukeng}
\address{Department of Mathematics and Computer Science, University of
Dschang, P.O. Box 67, Dschang, Cameroon}
\email{jwoukeng@yahoo.fr}
\date{}
\subjclass[2000]{35B27, 35B40, 46J10, 74D05, 74Q10}
\keywords{Almost periodic homogenization, sigma convergence, Fluid-solid
interaction, Convolution}

\begin{abstract}
In this paper we study the acoustic properties of porous media saturated by
an incompressible viscoelastic fluid. The model considered here consists of
a linear deformable porous skeleton having memory that is surrounded by a
viscoelastic Oldroyd fluid. Assuming the microstructures to be almost
periodically distributed and under the almost periodicity hypothesis on the
coefficients of the governing equations, we determine the macroscopic
equivalent medium. To achieve our goal, we use some very recent tools about
the sigma convergence of convolution sequences.
\end{abstract}

\maketitle

\section{Introduction and the model}

A number of important engineering materials such as polymers store and
dissipate simultaneously mechanical energy when subjected to external
forces. This form of response as a combination of both liquid-like and
solid-like features is termed viscoelasticity. The constitutive equations
for the viscoelastic behaviour include the elastic deformation and viscous
flow as special cases, and at the same time provide responses which are
characteristic of the combined behaviour. In order to deal with viscoelastic
fluid motion, researchers have focused on the use of constitutive equations
that capture important qualitative features describing the relationships
between the kinematic, mechanical and thermal fields equations and hence
permit the formulation of well-posed problems that are simplistic relative
the formulations that are supposed to characterize real materials. In that
direction, many models have been proposed and developed. We refer e.g. to 
\cite{Agra, AAGM, BA, Dasser, DGHL1, GM, Joseph, Pani, Sobo, Wang}, just to
cite a few. It is worth noting that in all the previous works, the models
involved did not deal with skeletons with memory.

In this work, we mean to study the acoustic properties of porous saturated
media. In this respect we consider the problem of viscoelastic Oldroyd fluid
motion through linear deformable porous media with memory. The novelty is
threefold. First, the deformable skeleton has memory. Second, instead of
considering usual Stokes equations flow, we rather consider the Oldroyd
class of incompressible viscoelastic fluids. Third, the pores are
distributed in an almost periodic fashion, generalizing the usual
equidistribution (also known as periodic distribution) always considered in
the earlier works. To be precise, as stated before, the problem will be
discussed in terms of two particular types of ideal materials: the elastic
skeleton (with memory) and the viscoelastic fluid, and the constitutive
equations will therefore be described on the microscale as follows.

\subsection{Description of the domain}

Let $Y=(0,1)^{N}$ ($N=2$ or $3$ being fixed once for all here and in the
sequel) be the reference cell, and let $Y_{1}$ and $Y_{2}$ be two open
disjoint subsets of $Y$ representing the local structure of the skeleton
(which is a linear elastic material) and the local structure of the fluid,
respectively. We assume that $\overline{Y}_{1}\subset Y$, $Y=\overline{Y}%
_{1}\cup Y_{2}$, $Y_{2}$ is connected and that the boundary $\partial Y_{1}$
of $Y_{1}$ is Lipschitz continuous. Moreover we assume that $Y_{1}$ and $%
Y_{2}$ have positive Lebesgue measure. Let $S\subset \mathbb{Z}^{N}$ be an
infinite subset of $\mathbb{Z}^{N}$ and let 
\begin{equation*}
G_{1}=\cup _{k\in S}(k+Y_{1}),\ \ \ \ G_{2}=\mathbb{R}^{N}\backslash 
\overline{G}_{1}.
\end{equation*}%
Then $G_{j}$ ($j=1,2$) is an open subset of $\mathbb{R}^{N}$ and moreover $%
G_{2}$ is connected. Since the cells $(k+Y)_{k\in S}$ are pairwise disjoint,
the characteristic function $\chi _{1}$ of the set $G_{1}$ in $\mathbb{R}%
^{N} $ satisfies $\chi _{1}=\sum_{k\in S}\chi _{k+Y_{1}}$ or more precisely, 
\begin{equation*}
\chi _{1}=\sum_{k\in \mathbb{Z}^{N}}\theta (k)\chi _{k+Y_{1}}
\end{equation*}%
where $\theta $ is the characteristic function of $S$ in $\mathbb{Z}^{N}$.
The function $\theta $ is the \emph{distribution function of the pores}
(that is, it characterizes the porous cells; remind that not all the cells
are porous). The special case $\theta (k)=1$ for all $k\in \mathbb{Z}^{N}$
(i.e. $S=\mathbb{Z}^{N}$) leads to the equidistribution of fissured cells,
commonly known in the literature as the \emph{periodic porous medium}. Here
we assume the function $\theta $ to be almost periodic (which corresponds to
the almost periodic distribution of the pores, that is, of the porous
cells), i.e., the translates $\theta (\cdot +h)$ ($h\in \mathbb{Z}^{N}$)
form a relatively compact set in $\ell ^{\infty }(\mathbb{Z}^{N})=L^{\infty
}(\mathbb{Z}^{N},\lambda )$, $\lambda $ being the counting measure. Here the
norm in $\ell ^{\infty }(\mathbb{Z}^{N})$ is given by $\left\Vert
u\right\Vert _{\ell ^{\infty }(\mathbb{Z}^{N})}=\sup_{k\in \mathbb{Z}%
^{N}}\left\vert u(k)\right\vert $ for $u\in \ell ^{\infty }(\mathbb{Z}^{N})$%
. As seen in the remark below, the almost periodicity assumption encompasses
all kinds of periodicity.

\begin{remark}
\label{r1}\emph{1) We recall that the space }$\ell _{AP}^{\infty }(\mathbb{Z}%
^{N})$\emph{\ in which the function }$\theta $\emph{\ lies, is the smallest
closed subspace of }$\ell ^{\infty }(\mathbb{Z}^{N})$\emph{\ that contains
all sequences of the form }%
\begin{equation*}
\mathbb{Z}^{N}\rightarrow \mathbb{C}:k\mapsto \exp (i\mu \cdot k),\ \ \mu
\in \mathbb{R}^{N}.
\end{equation*}%
\emph{Therefore it contains in particular all the periodic sequences defined
on the lattice }$\mathbb{Z}^{N}$\emph{. 2) The function }$\theta $\emph{\
being the characteristic function of a subset of }$\mathbb{Z}^{N}$\emph{,
assuming }$\theta $\emph{\ to be almost periodic in the above sense yields }$%
\theta $\emph{\ is actually periodic. Indeed we find that there is a }$p\in 
\mathbb{Z}^{N}$\emph{\ such that }%
\begin{equation*}
\sup_{k\in \mathbb{Z}^{N}}\left\vert \theta (k+p)-\theta (k)\right\vert <%
\frac{1}{2}.
\end{equation*}%
\emph{Since }$\theta $\emph{\ assumes only }$0$\emph{\ or }$1$\emph{, it
follows that }$\left\vert \theta (k+p)-\theta (k)\right\vert $\emph{\ takes
on either }$0$\emph{\ or }$1$\emph{. We deduce at once that }$\theta
(k+p)-\theta (k)=0$\emph{\ for all }$k\in \mathbb{Z}^{N}$\emph{, so that }$%
\theta $\emph{\ is periodic. Here, what we gain is that }$\theta $\emph{\
may assume any kind of periodicity.}
\end{remark}

\bigskip

Bearing this in mind, let $\Omega $ be an open bounded Lipschitz domain in $%
\mathbb{R}^{N}$ and let $\varepsilon >0$ be a small parameter. We define the
almost periodic porous saturated medium as follows: $\Omega
_{j}^{\varepsilon }=\Omega \cap \varepsilon G_{j}$ ($j=1,2$). The volumes $%
\Omega _{1}^{\varepsilon }$ and $\Omega _{2}^{\varepsilon }$ are
respectively filled by the solid and the fluid, and $\Omega
_{2}^{\varepsilon }$ is connected. We have $\Omega =\Omega _{1}^{\varepsilon
}\cup \Gamma _{12}^{\varepsilon }\cup \Omega _{2}^{\varepsilon }$ (disjoint
union) where $\Gamma _{12}^{\varepsilon }=\partial \Omega _{1}^{\varepsilon
}\cap \partial \Omega _{2}^{\varepsilon }\cap \Omega $ is the part of the
interface of $\Omega _{1}^{\varepsilon }$ with $\Omega _{2}^{\varepsilon }$
lying in $\Omega $. Let $\nu _{j}$ denote the unit outward normal on $%
\partial \Omega _{j}^{\varepsilon }$; then $\nu _{1}=-\nu _{2}$ on $\Gamma
_{12}^{\varepsilon }$.

\subsection{The constitutive equations}

\textit{In the solid}. The elastic constitutive law coupled with the
momentum balance give rise to 
\begin{equation}
\rho _{1}^{\varepsilon }\frac{\partial ^{2}\boldsymbol{u}_{\varepsilon }}{%
\partial t^{2}}-\Div\left[ A_{0}^{\varepsilon }\nabla \boldsymbol{u}%
_{\varepsilon }+\int_{0}^{t}A_{1}^{\varepsilon }(x,t-\tau )\nabla 
\boldsymbol{u}_{\varepsilon }(x,\tau )d\tau \right] =\rho _{1}^{\varepsilon }%
\boldsymbol{f}\text{ in }\Omega _{1}^{\varepsilon }\times (0,T)  \label{1.1}
\end{equation}%
where $\boldsymbol{u}_{\varepsilon }$ is the solid displacement, $%
A_{j}^{\varepsilon }$ the elastic tensors of the material constituting the
skeleton and $\rho _{1}^{\varepsilon }$ the density of that material.

\textit{In the fluid}. The viscoelastic constitutive law associated to the
momentum balance and the incompressibility give rise to 
\begin{equation}
\rho _{2}^{\varepsilon }\frac{\partial \boldsymbol{v}_{\varepsilon }}{%
\partial t}-\Div\left[ B_{0}^{\varepsilon }\nabla \boldsymbol{v}%
_{\varepsilon }+\int_{0}^{t}B_{1}^{\varepsilon }(x,t-\tau )\nabla 
\boldsymbol{v}_{\varepsilon }(x,\tau )d\tau \right] +\nabla p_{\varepsilon
}=\rho _{2}^{\varepsilon }\boldsymbol{g}\text{ in }\Omega _{2}^{\varepsilon
}\times (0,T)  \label{1.2}
\end{equation}%
\begin{equation}
\Div\boldsymbol{v}_{\varepsilon }=0\text{ in }\Omega _{2}^{\varepsilon
}\times (0,T)  \label{1.3}
\end{equation}%
where $\boldsymbol{v}_{\varepsilon }$ denotes the velocity of the fluid, $%
B_{j}^{\varepsilon }$ the dynamical viscosity tensors of the fluid, $\rho
_{2}^{\varepsilon }$ the density of the fluid and $p_{\varepsilon }$ the
pressure of the fluid.

\textit{At the fluid-solid interface}. The adherence condition 
\begin{equation}
\boldsymbol{v}_{\varepsilon }=\frac{\partial \boldsymbol{u}_{\varepsilon }}{%
\partial t}\text{ on }\Gamma _{12}^{\varepsilon }\times (0,T)  \label{1.4}
\end{equation}%
and the normal stress continuity 
\begin{eqnarray}
&&\left( A_{0}^{\varepsilon }\nabla \boldsymbol{u}_{\varepsilon
}+\int_{0}^{t}A_{1}^{\varepsilon }(x,t-\tau )\nabla \boldsymbol{u}%
_{\varepsilon }d\tau \right) \cdot \nu _{1}  \label{1.5} \\
&=&\left( -p_{\varepsilon }I+B_{0}^{\varepsilon }\nabla \boldsymbol{v}%
_{\varepsilon }+\int_{0}^{t}B_{1}^{\varepsilon }(x,t-\tau )\nabla 
\boldsymbol{v}_{\varepsilon }d\tau \right) \cdot \nu _{1}\text{ on }\Gamma
_{12}^{\varepsilon }\times (0,T)  \notag
\end{eqnarray}%
hold.

Finally we supplement Eq. (\ref{1.1})-(\ref{1.5}) by the boundary conditions 
\begin{equation}
\boldsymbol{u}_{\varepsilon }=0\text{ on }\left( \partial \Omega
_{1}^{\varepsilon }\cap \partial \Omega \right) \times (0,T),\ \boldsymbol{v}%
_{\varepsilon }=0\text{ on }\left( \partial \Omega _{2}^{\varepsilon }\cap
\partial \Omega \right) \times (0,T)  \label{1.6}
\end{equation}%
and the initial homogeneous conditions 
\begin{equation}
\boldsymbol{u}_{\varepsilon }(x,0)=\frac{\partial \boldsymbol{u}%
_{\varepsilon }}{\partial t}(x,0)=0\text{ in }\Omega _{1}^{\varepsilon }%
\text{ and }\boldsymbol{v}_{\varepsilon }(x,0)=0\text{ in }\Omega
_{2}^{\varepsilon }.  \label{1.7}
\end{equation}%
Here, $A_{0}^{\varepsilon }(x)=A_{0}\left( x/\varepsilon \right) $ and $%
A_{1}^{\varepsilon }(x,t)=A_{1}\left( x/\varepsilon ,t/\varepsilon \right) $%
, $B_{0}^{\varepsilon }(x)=B_{0}\left( x/\varepsilon \right) $ and $%
B_{1}^{\varepsilon }(x,t)=B_{1}\left( x/\varepsilon ,t/\varepsilon \right) $
($(x,t)\in Q=\Omega \times (0,T)$, $T>0$ a given real number), $\rho
_{j}^{\varepsilon }(x)=\rho _{j}(x/\varepsilon )$ ($j=1,2$) with:

\begin{itemize}
\item[(A1)] $A_{0},B_{0}\in L^{\infty }(\mathbb{R}_{y}^{N})^{N^{2}}$ and $%
A_{1},B_{1}\in L^{\infty }(\mathbb{R}_{y,\tau }^{N+1})^{N^{2}}$ being
symmetric matrices satisfying the following assumption: there exists $\alpha
>0$ such that 
\begin{equation*}
A_{0}\xi \cdot \xi \geq \alpha \left\vert \xi \right\vert ^{2}\text{ and }%
B_{0}\xi \cdot \xi \geq \alpha \left\vert \xi \right\vert ^{2}\text{ a.e. in 
}\Omega \text{ and for all }\xi \in \mathbb{R}^{N}.
\end{equation*}

\item[(A2)] The density function $\rho _{j}$ ($j=1,2$) lies in $L^{\infty }(%
\mathbb{R}_{y}^{N})$ and satisfy $\Lambda ^{-1}\leq \rho _{j}(y)\leq \Lambda 
$ a.e. $y\in \mathbb{R}^{N}$, for some positive $\Lambda $.

\item[(A3)] The functions $\boldsymbol{f}$ and $\boldsymbol{g}$ belong to $%
H^{1}(0,T;L^{2}(\Omega )^{N})$.
\end{itemize}

In view of the assumptions on $A_{l},B_{l}$ and $\rho _{j}$ ($l=0,1$, $j=1,2$%
), the functions $A_{0}^{\varepsilon }$ (resp. $B_{0}^{\varepsilon }$), $%
A_{1}^{\varepsilon }$ (resp. $B_{1}^{\varepsilon }$) and $\rho
_{j}^{\varepsilon }$ are well defined and belong to $L^{\infty }(\Omega
)^{N^{2}}$ (resp. $L^{\infty }(\Omega )^{N^{2}}$), $L^{\infty }(Q)^{N^{2}}$
(resp. $L^{\infty }(Q)^{N^{2}}$) and $L^{\infty }(\Omega )$.

We stress that we may view our model as a \emph{physically correct} one in
the sense that if in Equations (\ref{1.1}) and (\ref{1.3}) (and in the
interface conditions (\ref{1.4}) and (\ref{1.5})) we replace the gradients $%
\nabla \boldsymbol{u}_{\varepsilon }$ and $\nabla \boldsymbol{v}%
_{\varepsilon }$ by the strains $e(\boldsymbol{u}_{\varepsilon })=\frac{1}{2}%
\left( \nabla \boldsymbol{u}_{\varepsilon }+\nabla ^{T}\boldsymbol{u}%
_{\varepsilon }\right) $ and $e(\boldsymbol{v}_{\varepsilon })=\frac{1}{2}%
\left( \nabla \boldsymbol{u}_{\varepsilon }+\nabla ^{T}\boldsymbol{u}%
_{\varepsilon }\right) $ respectively, then thanks to Korn's inequality \cite%
[p. 31]{Pana}, the mathematical analysis does not change, and the model
becomes therefore physically correct. Thus the use of the gradient instead
of the strain is just to simplify the exposition of the results. Also, the
fact that the coefficients $A_{0}$ and $B_{0}$ do not depend on the fast
time variable is just to avoid tricky and useless complications. Indeed
assuming for example $A_{0}$ dependent on $t/\varepsilon $, we will be led
to an estimation containing a term of the order $\varepsilon ^{-1}$.

The model in (\ref{1.1})-(\ref{1.5}) is a generalization of those in \cite%
{AAGM, BA, Dasser}. It seems to be more accurate to study the acoustic
properties of microstructured materials such as glass wool, foam, etc. Let
us precise that here, our considerations differ from those performed in all
the previous work. Indeed, first, in the framework of deterministic
homogenization theory, all the problems considered so far have been studied
in periodic porous media. Here we consider an environment that is almost
periodically perforated, generalizing the results of the earlier works.
Second, the problems solved so far did not involve any memory effect at the
microscale level; see e.g., \cite{MO, Orlik1, Yi}. Here we consider a
problem modeling a phenomenon with a memory effect at the fast
time-variable. From the physical point of view, it means that the memory
effects arising by meeting an obstacle decay in the surrounding of the next
obstacle. So considered, the problem is new and the obtained results are of
physical interest. Finally, though the problem considered here is linear, we
do not use the Laplace transform to achieve our goal. We rather use a direct
method, the \emph{two-scale convolution} (in the general framework of $%
\Sigma $\emph{-convergence}), leaning on some recent results about the
convergence of sequences defined by the convolution; see e.g., Theorem \ref%
{t2.6'}. Let us precise that the periodic counterpart of Theorem \ref{t2.6'}
is already known and has been proved by Visintin \cite[Proposition 2.13]%
{Visintin} by using the periodic unfolding method. A more general version of
that result in the framework of $\Sigma $-convergence has been for the first
time stated and proved in \cite{SW}, but with the restricted assumption that
the open set $\Omega \subset \mathbb{R}^{N}$ is bounded. Still in the $%
\Sigma $-convergence framework, a more general proof (in any open set $%
\Omega \subset \mathbb{R}^{N}$) has been given in the almost periodic
setting in a very recent work \cite{M2AS}. The use of this method instead of
the Laplace transform method is motivated by at least two facts: 1) It
enables one to tackle both linear and nonlinear homogenization problems
involving convolution both in fast space and time variables (see \cite{SW,
Introverted}), which is not the case for the latter method; 2) As observed
in numerical experiments by the authors of \cite{Yi} (see the Section 5
therein), "\emph{the memory effects make numerical inverse Laplace transform
very complicated}".

Our aim is to pass to the limit as the size $\varepsilon $ of the
microstructures shrinks to $0$ in the above system, and of course when the
elasticity and viscosity tensors, the density of both fluid and skeleton,
are subjected to be almost periodic, together with the assumption that the
medium is almost periodically perforated. Based on the above considerations,
we may therefore say that our results are new.

The work is framed as follows. Section 2 deals with the statement and the
proof of the existence result for problem (\ref{1.1})-(\ref{1.7}). We also
derive therein some a priori estimates and a compactness result. In section
3, we give some fundamental facts about almost periodicity. The sigma
convergence concept with some related convolution results are discussed in
Section 4. In Section 5, we derive the homogenization result leading to the
properties of the equivalent medium as stated earlier.

Throughout the work, vector spaces are assumed to be real vector spaces, and
scalar functions are assumed to take real values. We shall always assume
that the numerical space $\mathbb{R}^{N}$ (integer $N\geq 1$) and its open
sets are each equipped with the Lebesgue measure $dx=dx_{1}...dx_{N}$.

\section{Existence and compactness results}

We begin by giving the functional setting of our problem. We define the
function spaces 
\begin{equation*}
V_{\varepsilon }^{j}=\{\left. \boldsymbol{v}\right\vert _{\Omega
_{j}^{\varepsilon }}:v\in H_{0}^{1}(\Omega )^{N}\}\text{ with the norm }%
\left\Vert \boldsymbol{v}\right\Vert _{V_{\varepsilon }^{j}}=\left\Vert
\nabla v\right\Vert _{L^{2}(\Omega _{j}^{\varepsilon })^{N^{2}}}\text{, }%
j=1,2\text{;}
\end{equation*}%
\begin{equation*}
W=\{\boldsymbol{v}\in H_{0}^{1}(\Omega )^{N}:\Div\boldsymbol{v}=0\text{ in }%
\Omega _{2}^{\varepsilon }\}\text{ with usual }H_{0}^{1}(\Omega )^{N}\text{%
-norm;}
\end{equation*}%
\begin{equation*}
H=\text{ the closure of }W\text{ for the norm induced by the inner product }%
\left( \left( \cdot ,\cdot \right) \right) ,
\end{equation*}%
where $\left( \left( \cdot ,\cdot \right) \right) $ denotes the weighted $%
L^{2}$ inner product 
\begin{equation*}
\left( \left( \boldsymbol{u},\boldsymbol{v}\right) \right) =\int_{\Omega
_{1}^{\varepsilon }}\rho _{1}^{\varepsilon }\boldsymbol{u}\cdot \boldsymbol{v%
}dx+\int_{\Omega _{2}^{\varepsilon }}\rho _{2}^{\varepsilon }\boldsymbol{u}%
\cdot \boldsymbol{v}dx\text{ for }\boldsymbol{u},\boldsymbol{v}\in
L^{2}(\Omega )^{N}.
\end{equation*}%
The induced norm on $H$ is equivalent to the $L^{2}$-norm and $H$ preserves
the divergence free property in $\Omega _{2}^{\varepsilon }$. We denote by $%
\left\langle \left\langle \cdot ,\cdot \right\rangle \right\rangle $ the
duality pairing between $W^{\prime }$ and $W$ generated from the above inner
product $\left( \left( \cdot ,\cdot \right) \right) $. This being so, we
have the evolution triple $W\hookrightarrow H\hookrightarrow W^{\prime }$
where we identify $H$ with its topological dual $H^{\prime }$.

In order to solve (\ref{1.1})-(\ref{1.7}), we need an auxiliary equivalent
formulation of the said-problem. Prior to that, let us introduce the
auxiliary functions 
\begin{equation*}
\boldsymbol{w}_{\varepsilon }(t)=\int_{0}^{t}\boldsymbol{v}_{\varepsilon
}(s)ds,\ \boldsymbol{u}^{\varepsilon }=\chi _{1}^{\varepsilon }\boldsymbol{u}%
_{\varepsilon }+\chi _{2}^{\varepsilon }\boldsymbol{w}_{\varepsilon }\text{, 
}\rho ^{\varepsilon }=\chi _{1}^{\varepsilon }\rho _{1}^{\varepsilon }+\chi
_{2}^{\varepsilon }\rho _{2}^{\varepsilon }\text{ and }F_{\varepsilon }=\chi
_{1}^{\varepsilon }\boldsymbol{f}+\chi _{2}^{\varepsilon }\boldsymbol{g}
\end{equation*}%
where $\chi _{j}^{\varepsilon }$ ($j=1,2$) stands for the characteristic
function of the set $\Omega _{j}^{\varepsilon }$ in $\Omega $. It is easy to
see that $\chi _{j}^{\varepsilon }(x)=\chi _{j}(x/\varepsilon )$ ($x\in
\Omega _{j}^{\varepsilon }$), where $\chi _{j}$ is the characteristic
function of the set $G_{j}$ in $\mathbb{R}^{N}$. With the above notation, we
may reformulate system (\ref{1.1})-(\ref{1.7}) as a variational problem,
that is, find $\boldsymbol{u}^{\varepsilon }\in \mathcal{C}%
^{1}([0,T];L^{2}(\Omega )^{N})\cap \mathcal{C}([0,T];H_{0}^{1}(\Omega )^{N})$
and $p_{\varepsilon }\in L^{2}(0,T;L^{2}(\Omega _{2}^{\varepsilon }))$ such
that 
\begin{eqnarray}
&&\frac{d^{2}}{dt^{2}}\left( \left( \boldsymbol{u}^{\varepsilon }(t),\varphi
\right) \right) \text{+}\int_{\Omega }\left( \chi _{1}^{\varepsilon
}A_{0}^{\varepsilon }\nabla \boldsymbol{u}^{\varepsilon }(t)\text{+}\chi
_{2}^{\varepsilon }B_{0}^{\varepsilon }\nabla \frac{\partial \boldsymbol{u}%
^{\varepsilon }}{\partial t}(t)\right) \cdot \nabla \varphi dx\text{-}%
\int_{\Omega }\chi _{2}^{\varepsilon }p_{\varepsilon }\Div\varphi dx
\label{1.8'} \\
&&+\int_{0}^{t}\left( \int_{\Omega }\left( \chi _{1}^{\varepsilon
}A_{1}^{\varepsilon }(t-\tau )\nabla \boldsymbol{u}^{\varepsilon }(\tau
)+\chi _{2}^{\varepsilon }B_{1}^{\varepsilon }(t-\tau )\nabla \frac{\partial 
\boldsymbol{u}^{\varepsilon }}{\partial t}(\tau )\right) \cdot \nabla
\varphi dx\right) d\tau  \notag \\
&=&\left( \left( F_{\varepsilon },\varphi \right) \right) \text{ for all }%
\varphi \in H_{0}^{1}(\Omega )^{N}.  \notag
\end{eqnarray}%
The choice of a global test function $\varphi \in H_{0}^{1}(\Omega )^{N}$
instead of two independent test functions on the two subdomains allows us to
incorporate the interface conditions (\ref{1.4})-(\ref{1.5}) into the weak
formulation. Equation (\ref{1.8'}) requires $\frac{\partial \boldsymbol{u}%
^{\varepsilon }}{\partial t}\in L^{2}(0,T;L^{2}(\Omega )^{N})$, may be
written under the equivalent divergence-free weak form 
\begin{eqnarray*}
&&\frac{d^{2}}{dt^{2}}\left( \left( \boldsymbol{u}^{\varepsilon }(t),\varphi
\right) \right) +\int_{\Omega }\left( \chi _{1}^{\varepsilon
}A_{0}^{\varepsilon }\nabla \boldsymbol{u}^{\varepsilon }(t)+\chi
_{2}^{\varepsilon }B_{0}^{\varepsilon }\nabla \frac{\partial \boldsymbol{u}%
^{\varepsilon }}{\partial t}(t)\right) \cdot \nabla \varphi dx \\
&&+\int_{0}^{t}\left( \int_{\Omega }\left( \chi _{1}^{\varepsilon
}A_{1}^{\varepsilon }(t-\tau )\nabla \boldsymbol{u}^{\varepsilon }(\tau
)+\chi _{2}^{\varepsilon }B_{1}^{\varepsilon }(t-\tau )\nabla \frac{\partial 
\boldsymbol{u}^{\varepsilon }}{\partial t}(\tau )\right) \cdot \nabla
\varphi dx\right) d\tau \\
&=&\left( \left( F_{\varepsilon },\varphi \right) \right) \text{, all }%
\varphi \in W,
\end{eqnarray*}%
or equivalently 
\begin{equation}
\frac{d^{2}}{dt^{2}}\left( \left( \boldsymbol{u}^{\varepsilon }(t),\varphi
\right) \right) +\left\langle \left\langle \mathcal{A}_{\varepsilon }(t)%
\boldsymbol{u}^{\varepsilon }(t),\varphi \right\rangle \right\rangle
+\int_{0}^{t}\left\langle \left\langle \mathcal{B}_{\varepsilon }(t-\tau )%
\boldsymbol{u}^{\varepsilon }(\tau ),\varphi \right\rangle \right\rangle
d\tau =\left( \left( F_{\varepsilon },\varphi \right) \right) \text{ }%
\forall \varphi \in W,  \label{1.10}
\end{equation}%
$\mathcal{A}_{\varepsilon }(t)$ and $\mathcal{B}_{\varepsilon }(t-\tau )$
(for $0\leq \tau \leq t\leq T$) being bounded operators defined from $W$
into $W^{\prime }$ by 
\begin{equation*}
\left\langle \left\langle \mathcal{A}_{\varepsilon }(t)\boldsymbol{u}%
^{\varepsilon }(t),\varphi \right\rangle \right\rangle =\int_{\Omega }\left(
\chi _{1}^{\varepsilon }A_{0}^{\varepsilon }\nabla \boldsymbol{u}%
^{\varepsilon }(t)+\chi _{2}^{\varepsilon }B_{0}^{\varepsilon }\nabla \frac{%
\partial \boldsymbol{u}^{\varepsilon }}{\partial t}(t)\right) \cdot \nabla
\varphi dx
\end{equation*}%
\begin{equation*}
\left\langle \left\langle \mathcal{B}_{\varepsilon }(t-\tau )\boldsymbol{u}%
^{\varepsilon }(\tau ),\varphi \right\rangle \right\rangle =\int_{\Omega
}\left( \chi _{1}^{\varepsilon }A_{1}^{\varepsilon }(t-\tau )\nabla 
\boldsymbol{u}^{\varepsilon }(\tau )+\chi _{2}^{\varepsilon
}B_{1}^{\varepsilon }(t-\tau )\nabla \frac{\partial \boldsymbol{u}%
^{\varepsilon }}{\partial t}(\tau )\right) \cdot \nabla \varphi dx.
\end{equation*}

In order to ensure the existence and uniqueness of $\boldsymbol{u}%
^{\varepsilon }$ we use respectively \cite[Theorem 3.4]{DGHL2} and \cite[%
Theorem 3.2]{Orlik} as follows. First, in view of \cite[Theorem 3.4]{DGHL2},
the operator $\frac{d^{2}}{dt^{2}}+\mathcal{A}_{\varepsilon }$ is one-to-one
from $L^{2}(0,T;W)$ onto $L^{2}(0,T;W^{\prime })$. Next since $\mathcal{B}%
_{\varepsilon }(t-\tau )$ is bounded from $W$ into $W^{\prime }$, it readily
follows from \cite[Theorem 3.2]{Orlik} that (\ref{1.10}) possesses a unique
solution $\boldsymbol{u}^{\varepsilon }\in L^{2}(0,T;W)$ with $\frac{d^{2}%
\boldsymbol{u}^{\varepsilon }}{dt^{2}}\in L^{2}(0,T;W^{\prime })$. Therefore
by setting $\boldsymbol{u}_{\varepsilon }=\chi _{1}^{\varepsilon }%
\boldsymbol{u}^{\varepsilon }$ and $\boldsymbol{v}_{\varepsilon }=\chi
_{2}^{\varepsilon }\frac{\partial \boldsymbol{u}^{\varepsilon }}{\partial t}$%
, we readily obtain a solution $(\boldsymbol{u}_{\varepsilon },\boldsymbol{v}%
_{\varepsilon })\in L^{2}(0,T;V_{\varepsilon }^{1})\times
L^{2}(0,T;V_{\varepsilon }^{2})$ for the divergence-free formulation of (\ref%
{1.1})-(\ref{1.7}), and owing to the regularity of the data (see assumption
(A3)), the following result is in order.

\begin{theorem}
\label{t2.1}For each $\varepsilon >0$ and under Assumptions \emph{(A1)-(A3)}%
, there exists a unique triplet $(\boldsymbol{u}_{\varepsilon },\boldsymbol{v%
}_{\varepsilon },p_{\varepsilon })$ which possesses the regularity $%
\boldsymbol{u}_{\varepsilon }\in W^{1,\infty }(0,T;V_{\varepsilon }^{1}),\ 
\frac{\partial ^{2}\boldsymbol{u}_{\varepsilon }}{\partial t^{2}}\in
L^{\infty }(0,T;L^{2}(\Omega _{1}^{\varepsilon })^{N}),\ \boldsymbol{v}%
_{\varepsilon }\in W^{1,\infty }(0,T;H_{\varepsilon })\cap
L^{2}(0,T;V_{\varepsilon }^{2})$, $p_{\varepsilon }\in L^{\infty
}(0,T;L^{2}(\Omega _{2}^{\varepsilon }))$, and satisfies equations \emph{(%
\ref{1.1})-(\ref{1.7})}. Moreover, $\boldsymbol{u}_{\varepsilon }\in 
\mathcal{C}^{1}([0,T];L^{2}(\Omega _{1}^{\varepsilon })^{N})$, $\boldsymbol{v%
}_{\varepsilon }\in \mathcal{C}([0,T];H_{\varepsilon })$ and the following
estimates hold: 
\begin{equation}
\sup_{0\leq t\leq T}\left( \left\Vert \boldsymbol{u}_{\varepsilon
}(t)\right\Vert _{L^{2}(\Omega _{1}^{\varepsilon })}^{2}+\left\Vert \nabla 
\boldsymbol{u}_{\varepsilon }(t)\right\Vert _{L^{2}(\Omega _{1}^{\varepsilon
})}^{2}+\left\Vert \boldsymbol{v}_{\varepsilon }(t)\right\Vert
_{L^{2}(\Omega _{2}^{\varepsilon })}^{2}\right) \leq C,  \label{2.1}
\end{equation}%
\begin{equation}
\underset{0\leq t\leq T}{ess\sup }\left( \left\Vert \frac{\partial 
\boldsymbol{v}_{\varepsilon }}{\partial t}(t)\right\Vert _{L^{2}(\Omega
_{2}^{\varepsilon })}^{2}+\left\Vert \frac{\partial ^{2}\boldsymbol{u}%
_{\varepsilon }}{\partial t^{2}}(t)\right\Vert _{L^{2}(\Omega
_{1}^{\varepsilon })}^{2}\right) \leq C,  \label{2.2}
\end{equation}%
\begin{equation}
\sup_{0\leq t\leq T}\left( \left\Vert \frac{\partial \boldsymbol{u}%
_{\varepsilon }}{\partial t}(t)\right\Vert _{L^{2}(\Omega _{1}^{\varepsilon
})}^{2}\right) \leq C\text{ and }\int_{0}^{T}\left\Vert \nabla \boldsymbol{v}%
_{\varepsilon }(t)\right\Vert _{L^{2}(\Omega _{2}^{\varepsilon })}^{2}dt\leq
C,  \label{2.31}
\end{equation}%
\begin{equation}
\left\Vert p_{\varepsilon }\right\Vert _{L^{2}(\Omega _{2}^{\varepsilon
}\times (0,T))}\leq C  \label{2.3}
\end{equation}%
where $C$ is a positive constant not depending upon $\varepsilon $.
\end{theorem}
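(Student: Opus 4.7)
The plan is to take the variational formulation \eqref{1.10} (for which existence and uniqueness have already been established by invoking \cite[Theorem 3.4]{DGHL2} and \cite[Theorem 3.2]{Orlik} in the paragraph preceding the statement) and derive the four $\varepsilon$-independent estimates by two successive energy identities, followed by a De Rham--Bogovskii argument to recover the pressure.

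For \eqref{2.1}, I would test \eqref{1.10} with $\varphi=\partial_t \boldsymbol{u}^{\varepsilon}(t)\in W$ -- admissible because $\partial_t \boldsymbol{u}^{\varepsilon}$ restricts to the divergence-free $\boldsymbol{v}_{\varepsilon}$ on $\Omega_2^{\varepsilon}$ -- and integrate over $[0,s]$. By the symmetry of $A_0^{\varepsilon}, B_0^{\varepsilon}$ assumed in (A1), the inertial and elastic contributions rewrite as time derivatives of squared norms, while the viscous contribution is coercive on $\Omega_2^{\varepsilon}$ by the same hypothesis. The two memory contributions are controlled by Cauchy--Schwarz in $\tau$ together with Young's inequality, and Gr\"onwall's lemma then delivers the $L^\infty$-in-time bounds on $\|\boldsymbol{u}_{\varepsilon}\|_{H^1(\Omega_1^\varepsilon)}$ and $\|\boldsymbol{v}_{\varepsilon}\|_{L^2(\Omega_2^\varepsilon)}$, together with $\int_0^T\|\nabla\boldsymbol{v}_{\varepsilon}\|_{L^2}^{2}\,dt\le C$. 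This gives \eqref{2.1} and the second inequality of \eqref{2.31}.

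For \eqref{2.2} and the first inequality of \eqref{2.31}, I would differentiate \eqref{1.10} in $t$ and test the resulting equation with $\partial_t^{2}\boldsymbol{u}^{\varepsilon}$. The delicate point is the convolution term: differentiating $\int_0^t K^{\varepsilon}(t-\tau)\nabla g(\tau)\,d\tau$ in $t$ directly produces a boundary contribution $K^{\varepsilon}(t)\nabla g(0)$ plus $\frac{1}{\varepsilon}\int_0^t(\partial_\tau K)(\cdot,(t-\tau)/\varepsilon)\nabla g(\tau)\,d\tau$, and the latter would wreck the estimate. To avoid this, I would integrate by parts in $\tau$, exploiting the zero initial data \eqref{1.7} to kill the boundary term and transfer the time derivative onto $g$, obtaining the benign $\int_0^t K^{\varepsilon}(t-\tau)\nabla\partial_\tau g(\tau)\,d\tau$ with no $\varepsilon^{-1}$ factor -- which is precisely where the assumption that $A_0,B_0$ are independent of $t/\varepsilon$ is used. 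The initial value $\partial_t^{2}\boldsymbol{u}_{\varepsilon}(0)$ is extracted from \eqref{1.1}: because of \eqref{1.7} the memory and elastic contributions vanish at $t=0$, leaving only $\boldsymbol{f}(0)$, controlled by (A3). A second Gr\"onwall argument then yields \eqref{2.2} together with $\sup_t\|\partial_t\boldsymbol{u}_{\varepsilon}(t)\|_{L^2(\Omega_1^\varepsilon)}\le C$. To justify the formal differentiation rigorously, one may either use difference quotients in $t$ or perform the argument on a Galerkin scheme in $W$ and pass to the limit.

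For the pressure estimate \eqref{2.3}, I would read $\nabla p_{\varepsilon}$ from the fluid equation \eqref{1.2}: on $\Omega_2^{\varepsilon}$,
\begin{equation*}
\nabla p_{\varepsilon}=\rho_2^{\varepsilon}\boldsymbol{g}-\rho_2^{\varepsilon}\partial_t\boldsymbol{v}_{\varepsilon}+\Div\!\left[B_0^{\varepsilon}\nabla\boldsymbol{v}_{\varepsilon}+\int_0^t B_1^{\varepsilon}(t-\tau)\nabla\boldsymbol{v}_{\varepsilon}(\tau)\,d\tau\right],
\end{equation*}
so that $\nabla p_{\varepsilon}$ lies in $L^{2}(0,T;H^{-1}(\Omega_2^{\varepsilon})^{N})$ with norm bounded by the quantities already estimated. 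I would then invoke an $\varepsilon$-uniform Ne\v{c}as (equivalently Bogovski\u\i) inequality on the perforated fluid domain $\Omega_2^{\varepsilon}$ to upgrade this to $\|p_{\varepsilon}\|_{L^{2}(\Omega_2^\varepsilon\times(0,T))}\le C$, after selecting the zero-mean representative. The availability of such a uniform inequality rests on the fact, established in Remark \ref{r1}, that the almost periodic distribution of pores is actually periodic, with $Y_2$ connected, so that the classical constructions apply. I expect the principal obstacle to be the clean orchestration of the convolution integration-by-parts in the second energy step, since any surviving boundary term would immediately re-introduce the $\varepsilon^{-1}$ loss and destroy the $\varepsilon$-independence of the constants.
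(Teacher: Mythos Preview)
Your proposal is correct and follows essentially the same approach as the paper for the first energy identity: the paper multiplies \eqref{1.1} by $\partial_t\boldsymbol{u}_\varepsilon$ and \eqref{1.2} by $\boldsymbol{v}_\varepsilon$, which is exactly your test $\varphi=\partial_t\boldsymbol{u}^\varepsilon$ in the coupled formulation \eqref{1.10}, and then uses Young's inequality on the convolution terms followed by Gronwall, exactly as you describe.

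Where you are in fact more thorough than the paper: the paper's proof writes out only the first energy identity and then asserts that ``all the estimates \eqref{2.1}--\eqref{2.31} follow thereby,'' without displaying a separate argument for \eqref{2.2}. Your second step (differentiate in $t$, test with $\partial_t^2\boldsymbol{u}^\varepsilon$, handle the convolution via $\partial_t(K^\varepsilon\ast g)=K^\varepsilon\ast\partial_t g$ thanks to $g(0)=0$) is the standard way to obtain \eqref{2.2} and is implicitly what is needed. One small imprecision: the clause ``which is precisely where the assumption that $A_0,B_0$ are independent of $t/\varepsilon$ is used'' is attached to the convolution manipulation, but that manipulation concerns $A_1,B_1$; the time-independence of $A_0,B_0$ is needed rather in the non-convolution terms $\partial_t(A_0^\varepsilon\nabla\boldsymbol{u}_\varepsilon)=A_0^\varepsilon\nabla\partial_t\boldsymbol{u}_\varepsilon$, where a dependence on $t/\varepsilon$ would produce the $\varepsilon^{-1}$ loss the paper warns about. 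For the pressure, the paper simply cites \cite[Section~3.2]{DGHL2} and \cite[Lemma~2.3]{JMS}; your De~Rham--Bogovski\u{\i} sketch with the observation that Remark~\ref{r1} guarantees periodicity (hence a uniform right-inverse of the divergence) is a correct fleshing-out of that citation.
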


\begin{proof}
It just remains to prove the existence and uniqueness of the pressure,
together with the uniform estimates. But the existence and uniqueness of $%
p_{\varepsilon }$ is derived exactly as in \cite[Section 3.2]{DGHL2} (see
especially Theorem 3.4 therein).

It remains to verify inequalities (\ref{2.1})-(\ref{2.3}). Multiply Eq. (\ref%
{1.1}) by $\frac{\partial \boldsymbol{u}_{\varepsilon }}{\partial t}(t)=%
\boldsymbol{u}_{\varepsilon }^{\prime }(t)$, Eq. (\ref{1.2}) by $\boldsymbol{%
v}_{\varepsilon }(t)$ and sum up, then 
\begin{equation}
\begin{array}{l}
\left( \rho _{1}^{\varepsilon }\frac{\partial ^{2}\boldsymbol{u}%
_{\varepsilon }}{\partial t^{2}},\frac{\partial \boldsymbol{u}_{\varepsilon }%
}{\partial t}\right) _{L^{2}(\Omega _{1}^{\varepsilon })}+\left( \rho
_{2}^{\varepsilon }\frac{\partial \boldsymbol{v}_{\varepsilon }}{\partial t},%
\boldsymbol{v}_{\varepsilon }\right) _{L^{2}(\Omega _{2}^{\varepsilon
})}+\left( A_{0}^{\varepsilon }\nabla \boldsymbol{u}_{\varepsilon
}+A_{1}^{\varepsilon }\ast \nabla \boldsymbol{u}_{\varepsilon },\nabla 
\boldsymbol{u}_{\varepsilon }^{\prime }\right) _{L^{2}(\Omega
_{1}^{\varepsilon })} \\ 
+\left( B_{0}^{\varepsilon }\nabla \boldsymbol{u}_{\varepsilon
}+B_{1}^{\varepsilon }\ast \nabla \boldsymbol{u}_{\varepsilon },\nabla 
\boldsymbol{v}_{\varepsilon }\right) _{L^{2}(\Omega _{2}^{\varepsilon
})}=\left( \rho _{1}^{\varepsilon }\boldsymbol{f},\boldsymbol{u}%
_{\varepsilon }^{\prime }\right) _{L^{2}(\Omega _{1}^{\varepsilon })}+\left(
\rho _{2}^{\varepsilon }\boldsymbol{g},\boldsymbol{v}_{\varepsilon }\right)
_{L^{2}(\Omega _{2}^{\varepsilon })}%
\end{array}
\label{2.4}
\end{equation}%
where $(,)_{L^{2}}$ stands for the inner product in $L^{2}$. But (\ref{2.4})
is equivalent to 
\begin{equation}
\begin{array}{l}
\frac{1}{2}\frac{d}{dt}\left\Vert (\rho _{1}^{\varepsilon })^{\frac{1}{2}}%
\boldsymbol{u}_{\varepsilon }^{\prime }(t)\right\Vert _{L^{2}(\Omega
_{1}^{\varepsilon })}^{2}+\frac{1}{2}\frac{d}{dt}\left\Vert (\rho
_{2}^{\varepsilon })^{\frac{1}{2}}\boldsymbol{v}_{\varepsilon
}(t)\right\Vert _{L^{2}(\Omega _{2}^{\varepsilon })}^{2} \\ 
+\frac{1}{2}\frac{d}{dt}\left( A_{0}^{\varepsilon }\nabla \boldsymbol{u}%
_{\varepsilon }+A_{1}^{\varepsilon }\ast \nabla \boldsymbol{u}_{\varepsilon
},\nabla \boldsymbol{u}_{\varepsilon }\right) _{L^{2}(\Omega
_{1}^{\varepsilon })}+\left( B_{0}^{\varepsilon }\nabla \boldsymbol{v}%
_{\varepsilon }+B_{1}^{\varepsilon }\ast \nabla \boldsymbol{v}_{\varepsilon
},\nabla \boldsymbol{v}_{\varepsilon }\right) _{L^{2}(\Omega
_{2}^{\varepsilon })} \\ 
\ \ =\left( \rho _{1}^{\varepsilon }\boldsymbol{f},\boldsymbol{u}%
_{\varepsilon }^{\prime }(t)\right) _{L^{2}(\Omega _{1}^{\varepsilon
})}+\left( \rho _{2}^{\varepsilon }\boldsymbol{g},\boldsymbol{v}%
_{\varepsilon }(t)\right) _{L^{2}(\Omega _{2}^{\varepsilon })}.%
\end{array}
\label{2.5}
\end{equation}%
Integrate (\ref{2.5}) with respect to $t$, 
\begin{equation*}
\begin{array}{l}
\left\Vert (\rho _{1}^{\varepsilon })^{\frac{1}{2}}\boldsymbol{u}%
_{\varepsilon }^{\prime }(t)\right\Vert _{L^{2}(\Omega _{1}^{\varepsilon
})}^{2}+\left\Vert (\rho _{2}^{\varepsilon })^{\frac{1}{2}}\boldsymbol{v}%
_{\varepsilon }(t)\right\Vert _{L^{2}(\Omega _{2}^{\varepsilon })}^{2} \\ 
+\left( A_{0}^{\varepsilon }\nabla \boldsymbol{u}_{\varepsilon
}+A_{1}^{\varepsilon }\ast \nabla \boldsymbol{u}_{\varepsilon },\nabla 
\boldsymbol{u}_{\varepsilon }\right) _{L^{2}(\Omega _{1}^{\varepsilon
})}+2\int_{0}^{t}\left( B_{0}^{\varepsilon }\nabla \boldsymbol{v}%
_{\varepsilon }+B_{1}^{\varepsilon }\ast \nabla \boldsymbol{v}_{\varepsilon
},\nabla \boldsymbol{v}_{\varepsilon }\right) _{L^{2}(\Omega
_{2}^{\varepsilon })}d\tau \\ 
\ \ \ \ \ =2\int_{0}^{t}\left[ \left( \rho _{1}^{\varepsilon }\boldsymbol{f},%
\boldsymbol{u}_{\varepsilon }^{\prime }(t)\right) _{L^{2}(\Omega
_{1}^{\varepsilon })}+\left( \rho _{2}^{\varepsilon }\boldsymbol{g},%
\boldsymbol{v}_{\varepsilon }(t)\right) _{L^{2}(\Omega _{2}^{\varepsilon })}%
\right] d\tau .%
\end{array}%
\end{equation*}%
But 
\begin{eqnarray}
&&2\int_{0}^{t}\left[ \left( \rho _{1}^{\varepsilon }\boldsymbol{f},%
\boldsymbol{u}_{\varepsilon }^{\prime }(t)\right) _{L^{2}(\Omega
_{1}^{\varepsilon })}+\left( \rho _{2}^{\varepsilon }\boldsymbol{g},%
\boldsymbol{v}_{\varepsilon }(t)\right) _{L^{2}(\Omega _{2}^{\varepsilon })}%
\right] d\tau  \label{2.6} \\
&\leq &\int_{0}^{t}\left( \left\Vert (\rho _{1}^{\varepsilon })^{\frac{1}{2}}%
\boldsymbol{f}(\tau )\right\Vert _{L^{2}(\Omega _{1}^{\varepsilon
})}^{2}+\left\Vert (\rho _{2}^{\varepsilon })^{\frac{1}{2}}\boldsymbol{g}%
(\tau )\right\Vert _{L^{2}(\Omega _{2}^{\varepsilon })}^{2}\right) d\tau 
\notag \\
&&+\int_{0}^{t}\left( \left\Vert (\rho _{1}^{\varepsilon })^{\frac{1}{2}}%
\boldsymbol{u}_{\varepsilon }^{\prime }(\tau )\right\Vert _{L^{2}(\Omega
_{1}^{\varepsilon })}^{2}+\left\Vert (\rho _{2}^{\varepsilon })^{\frac{1}{2}}%
\boldsymbol{v}_{\varepsilon }(\tau )\right\Vert _{L^{2}(\Omega
_{2}^{\varepsilon })}^{2}\right) d\tau .  \notag
\end{eqnarray}%
Making use of (A1) together with (\ref{2.5})-(\ref{2.6}) we obtain 
\begin{equation}
\begin{array}{l}
\left\Vert (\rho _{1}^{\varepsilon })^{\frac{1}{2}}\boldsymbol{u}%
_{\varepsilon }^{\prime }(t)\right\Vert _{L^{2}(\Omega _{1}^{\varepsilon
})}^{2}+\left\Vert (\rho _{2}^{\varepsilon })^{\frac{1}{2}}\boldsymbol{v}%
_{\varepsilon }(t)\right\Vert _{L^{2}(\Omega _{2}^{\varepsilon
})}^{2}+\alpha \left\Vert \nabla \boldsymbol{u}_{\varepsilon }(t)\right\Vert
_{L^{2}(\Omega _{1}^{\varepsilon })}^{2}+2\alpha \int_{0}^{t}\left\Vert
\nabla \boldsymbol{v}_{\varepsilon }(\tau )\right\Vert _{L^{2}(\Omega
_{2}^{\varepsilon })}^{2}d\tau \\ 
\leq \int_{0}^{T}\left( \left\Vert (\rho _{1}^{\varepsilon })^{\frac{1}{2}}%
\boldsymbol{f}(\tau )\right\Vert _{L^{2}(\Omega _{1}^{\varepsilon
})}^{2}+\left\Vert (\rho _{2}^{\varepsilon })^{\frac{1}{2}}\boldsymbol{g}%
(\tau )\right\Vert _{L^{2}(\Omega _{2}^{\varepsilon })}^{2}\right) d\tau
-\left( A_{1}^{\varepsilon }\ast \nabla \boldsymbol{u}_{\varepsilon },\nabla 
\boldsymbol{u}_{\varepsilon }\right) _{L^{2}(\Omega _{1}^{\varepsilon })} \\ 
\ \ \ -2\int_{0}^{t}\left( B_{1}^{\varepsilon }\ast \nabla \boldsymbol{v}%
_{\varepsilon },\nabla \boldsymbol{v}_{\varepsilon }\right) _{L^{2}(\Omega
_{2}^{\varepsilon })}d\tau +\int_{0}^{t}\left( \left\Vert (\rho
_{1}^{\varepsilon })^{\frac{1}{2}}\boldsymbol{u}_{\varepsilon }^{\prime
}(\tau )\right\Vert _{L^{2}(\Omega _{1}^{\varepsilon })}^{2}+\left\Vert
(\rho _{2}^{\varepsilon })^{\frac{1}{2}}\boldsymbol{v}_{\varepsilon }(\tau
)\right\Vert _{L^{2}(\Omega _{2}^{\varepsilon })}^{2}\right) d\tau ,%
\end{array}
\label{2.7}
\end{equation}%
where here the convolution $\ast $ is taken with respect to $t$. But using
Young's inequality, 
\begin{equation*}
\begin{array}{l}
\left( A_{1}^{\varepsilon }\ast \nabla \boldsymbol{u}_{\varepsilon },\nabla 
\boldsymbol{u}_{\varepsilon }\right) _{L^{2}(\Omega _{1}^{\varepsilon })} \\ 
=\left( \int_{0}^{t}A_{1}^{\varepsilon }(\cdot ,t-\tau )\nabla \boldsymbol{u}%
_{\varepsilon }(\tau )d\tau ,\nabla \boldsymbol{u}_{\varepsilon }(t)\right)
_{L^{2}(\Omega _{1}^{\varepsilon })} \\ 
=\int_{0}^{t}\left( \int_{\Omega _{1}^{\varepsilon }}A_{1}^{\varepsilon
}(\cdot ,t-\tau )\nabla \boldsymbol{u}_{\varepsilon }(\tau )\cdot \nabla 
\boldsymbol{u}_{\varepsilon }(t)dx\right) d\tau \\ 
\leq \int_{0}^{t}\left( C\left\Vert A_{1}^{\varepsilon }(t-\tau )\nabla 
\boldsymbol{u}_{\varepsilon }(\tau )\right\Vert _{L^{2}(\Omega
_{1}^{\varepsilon })}^{2}+\frac{\alpha }{2T}\left\Vert \nabla \boldsymbol{u}%
_{\varepsilon }(t)\right\Vert _{L^{2}(\Omega _{1}^{\varepsilon
})}^{2}\right) d\tau \\ 
\leq \frac{\alpha }{2}\left\Vert \nabla \boldsymbol{u}_{\varepsilon
}(t)\right\Vert _{L^{2}(\Omega _{1}^{\varepsilon
})}^{2}+C\int_{0}^{t}\left\Vert A_{1}^{\varepsilon }(t-\tau )\nabla 
\boldsymbol{u}_{\varepsilon }(\tau )\right\Vert _{L^{2}(\Omega
_{1}^{\varepsilon })}^{2}d\tau \\ 
\leq \frac{\alpha }{2}\left\Vert \nabla \boldsymbol{u}_{\varepsilon
}(t)\right\Vert _{L^{2}(\Omega _{1}^{\varepsilon
})}^{2}+C\int_{0}^{t}\left\Vert \nabla \boldsymbol{u}_{\varepsilon }(\tau
)\right\Vert _{L^{2}(\Omega _{1}^{\varepsilon })}^{2}d\tau ,%
\end{array}%
\end{equation*}%
hence 
\begin{equation}
\left( A_{1}^{\varepsilon }\ast \nabla \boldsymbol{u}_{\varepsilon },\nabla 
\boldsymbol{u}_{\varepsilon }\right) _{L^{2}(\Omega _{1}^{\varepsilon
})}\leq \frac{\alpha }{2}\left\Vert \nabla \boldsymbol{u}_{\varepsilon
}(t)\right\Vert _{L^{2}(\Omega _{1}^{\varepsilon
})}^{2}+C\int_{0}^{t}\left\Vert \nabla \boldsymbol{u}_{\varepsilon }(\tau
)\right\Vert _{L^{2}(\Omega _{1}^{\varepsilon })}^{2}d\tau .  \label{2.8}
\end{equation}%
Also 
\begin{equation*}
\begin{array}{l}
2\int_{0}^{t}\left( (B_{1}^{\varepsilon }\ast \nabla \boldsymbol{v}%
_{\varepsilon })(\tau ),\nabla \boldsymbol{v}_{\varepsilon }(\tau )\right)
_{L^{2}(\Omega _{2}^{\varepsilon })}d\tau \\ 
=2\int_{0}^{t}\left( \int_{0}^{\tau }\left( \int_{\Omega _{2}^{\varepsilon
}}B_{1}^{\varepsilon }(\tau -s)\nabla \boldsymbol{v}_{\varepsilon }(s)\cdot
\nabla \boldsymbol{v}_{\varepsilon }(\tau )dx\right) ds\right) d\tau \\ 
\leq 2\int_{0}^{t}\left( C\int_{0}^{\tau }\left\Vert B_{1}^{\varepsilon
}(\tau -s)\nabla \boldsymbol{v}_{\varepsilon }(s)\right\Vert _{L^{2}(\Omega
_{2}^{\varepsilon })}^{2}ds+\int_{0}^{\tau }\frac{\alpha }{2\tau }\left\Vert
\nabla \boldsymbol{v}_{\varepsilon }(\tau )\right\Vert _{L^{2}(\Omega
_{2}^{\varepsilon })}^{2}ds\right) d\tau \\ 
\leq \alpha \int_{0}^{t}\left\Vert \nabla \boldsymbol{v}_{\varepsilon }(\tau
)\right\Vert _{L^{2}(\Omega _{2}^{\varepsilon })}^{2}d\tau
+C\int_{0}^{t}\left( \int_{0}^{\tau }\left\Vert \nabla \boldsymbol{v}%
_{\varepsilon }(s)\right\Vert _{L^{2}(\Omega _{2}^{\varepsilon
})}^{2}ds\right) d\tau ,%
\end{array}%
\end{equation*}%
hence 
\begin{eqnarray}
&&2\int_{0}^{t}\left( (B_{1}^{\varepsilon }\ast \nabla \boldsymbol{v}%
_{\varepsilon })(\tau ),\nabla \boldsymbol{v}_{\varepsilon }(\tau )\right)
_{L^{2}(\Omega _{2}^{\varepsilon })}d\tau  \label{2.9} \\
&\leq &\alpha \int_{0}^{t}\left\Vert \nabla \boldsymbol{v}_{\varepsilon
}(\tau )\right\Vert _{L^{2}(\Omega _{2}^{\varepsilon })}^{2}d\tau
+C\int_{0}^{t}\left( \int_{0}^{\tau }\left\Vert \nabla \boldsymbol{v}%
_{\varepsilon }(s)\right\Vert _{L^{2}(\Omega _{2}^{\varepsilon
})}^{2}ds\right) d\tau .  \notag
\end{eqnarray}%
Putting together (\ref{2.8}) and (\ref{2.9}), it emerges from (\ref{2.7})
that 
\begin{equation*}
\begin{array}{l}
\left\Vert (\rho _{1}^{\varepsilon })^{\frac{1}{2}}\boldsymbol{u}%
_{\varepsilon }^{\prime }(t)\right\Vert _{L^{2}(\Omega _{1}^{\varepsilon
})}^{2}+\left\Vert (\rho _{2}^{\varepsilon })^{\frac{1}{2}}\boldsymbol{v}%
_{\varepsilon }(t)\right\Vert _{L^{2}(\Omega _{2}^{\varepsilon })}^{2}+\frac{%
\alpha }{2}\left\Vert \nabla \boldsymbol{u}_{\varepsilon }(t)\right\Vert
_{L^{2}(\Omega _{1}^{\varepsilon })}^{2}+\alpha \int_{0}^{t}\left\Vert
\nabla \boldsymbol{v}_{\varepsilon }(\tau )\right\Vert _{L^{2}(\Omega
_{2}^{\varepsilon })}^{2}d\tau \\ 
\leq C+\int_{0}^{t}[\left\Vert (\rho _{1}^{\varepsilon })^{\frac{1}{2}}%
\boldsymbol{u}_{\varepsilon }^{\prime }(\tau )\right\Vert _{L^{2}(\Omega
_{1}^{\varepsilon })}^{2}+\left\Vert (\rho _{2}^{\varepsilon })^{\frac{1}{2}}%
\boldsymbol{v}_{\varepsilon }(\tau )\right\Vert _{L^{2}(\Omega
_{2}^{\varepsilon })}^{2} \\ 
\ \ \ \ \ \ \ \ \ \ \ \ \ \ \ \ \ \ \ \ \ \ \ \ \ \ \ \ \ +C\left\Vert
\nabla \boldsymbol{u}_{\varepsilon }(\tau )\right\Vert _{L^{2}(\Omega
_{1}^{\varepsilon })}^{2}+C\int_{0}^{\tau }\left\Vert \nabla \boldsymbol{v}%
_{\varepsilon }(s)\right\Vert _{L^{2}(\Omega _{2}^{\varepsilon
})}^{2}ds]d\tau .%
\end{array}%
\end{equation*}%
Appealing to Gronwall's inequality, 
\begin{equation*}
\left\Vert (\rho _{1}^{\varepsilon })^{\frac{1}{2}}\boldsymbol{u}%
_{\varepsilon }^{\prime }(t)\right\Vert _{L^{2}(\Omega _{1}^{\varepsilon
})}^{2}+\left\Vert (\rho _{2}^{\varepsilon })^{\frac{1}{2}}\boldsymbol{v}%
_{\varepsilon }(t)\right\Vert _{L^{2}(\Omega _{2}^{\varepsilon
})}^{2}+\left\Vert \nabla \boldsymbol{u}_{\varepsilon }(t)\right\Vert
_{L^{2}(\Omega _{1}^{\varepsilon })}^{2}+\int_{0}^{t}\left\Vert \nabla 
\boldsymbol{v}_{\varepsilon }(\tau )\right\Vert _{L^{2}(\Omega
_{2}^{\varepsilon })}^{2}d\tau \leq C
\end{equation*}%
for all $0\leq t\leq T$ and all $\varepsilon >0$. We recall that in all the
above inequalities, $C$ is a positive constant independent of both $%
\varepsilon $ and $t$. We therefore infer from (A2) that 
\begin{equation}
\left\Vert \boldsymbol{u}_{\varepsilon }^{\prime }(t)\right\Vert
_{L^{2}(\Omega _{1}^{\varepsilon })}^{2}+\left\Vert \boldsymbol{v}%
_{\varepsilon }(t)\right\Vert _{L^{2}(\Omega _{2}^{\varepsilon
})}^{2}+\left\Vert \nabla \boldsymbol{u}_{\varepsilon }(t)\right\Vert
_{L^{2}(\Omega _{1}^{\varepsilon })}^{2}+\int_{0}^{t}\left\Vert \nabla 
\boldsymbol{v}_{\varepsilon }(\tau )\right\Vert _{L^{2}(\Omega
_{2}^{\varepsilon })}^{2}d\tau \leq C  \label{2.10}
\end{equation}

Now, from 
\begin{equation*}
\boldsymbol{u}_{\varepsilon }(t)=\int_{0}^{t}\boldsymbol{u}_{\varepsilon
}^{\prime }(\tau )d\tau
\end{equation*}%
we get 
\begin{equation*}
\left\Vert \boldsymbol{u}_{\varepsilon }(t)\right\Vert _{L^{2}(\Omega
_{1}^{\varepsilon })}^{2}\leq \left( \int_{0}^{t}\left\Vert \boldsymbol{u}%
_{\varepsilon }^{\prime }(\tau )\right\Vert _{L^{2}(\Omega _{1}^{\varepsilon
})}d\tau \right) ^{2},
\end{equation*}%
thus, making use of (\ref{2.10}) it holds that 
\begin{equation*}
\left\Vert \boldsymbol{u}_{\varepsilon }(t)\right\Vert _{L^{2}(\Omega
_{1}^{\varepsilon })}^{2}\leq C,
\end{equation*}%
$C>0$ being independent of both $t$ and $\varepsilon $. All the estimates (%
\ref{2.1})-(\ref{2.31}) follow thereby. Finally, for (\ref{2.3}), we proceed
exactly as in the proof of \cite[Section 3.2]{DGHL2} (see also \cite[Lemma
2.3]{JMS}). This completes the proof.
\end{proof}

In order to deal with the compactness result, let us recall the definition
of the auxiliary functions we have just used in the preceding result:%
\begin{equation*}
\boldsymbol{w}_{\varepsilon }(t)=\int_{0}^{t}\boldsymbol{v}_{\varepsilon
}(s)ds,\ \boldsymbol{u}^{\varepsilon }=\chi _{1}^{\varepsilon }\boldsymbol{u}%
_{\varepsilon }+\chi _{2}^{\varepsilon }\boldsymbol{w}_{\varepsilon }\text{
and }\rho ^{\varepsilon }=\chi _{1}^{\varepsilon }\rho _{1}^{\varepsilon
}+\chi _{2}^{\varepsilon }\rho _{2}^{\varepsilon },
\end{equation*}%
With this in mind, we easily see that the vector function $(\boldsymbol{u}%
_{\varepsilon },\boldsymbol{w}_{\varepsilon },p_{\varepsilon })$ satisfies
the weak formulation (\ref{1.1}), (\ref{1.2'})-(\ref{1.4'}) and (\ref{1.5})-(%
\ref{1.7}) where 
\begin{equation}
\rho _{2}^{\varepsilon }\frac{\partial ^{2}\boldsymbol{w}_{\varepsilon }}{%
\partial t^{2}}-\Div\left( B_{0}^{\varepsilon }\nabla \frac{\partial 
\boldsymbol{w}_{\varepsilon }}{\partial t}+\int_{0}^{t}B_{1}^{\varepsilon
}(x,t-s)\nabla \frac{\partial \boldsymbol{w}_{\varepsilon }}{\partial t}%
(x,s)ds\right) +\nabla p_{\varepsilon }=\rho _{2}^{\varepsilon }\boldsymbol{g%
}\text{ in }\Omega _{2,T}^{\varepsilon }  \label{1.2'}
\end{equation}%
\begin{equation}
\Div\frac{\partial \boldsymbol{w}_{\varepsilon }}{\partial t}=0\text{ in }%
\Omega _{2,T}^{\varepsilon }  \label{1.3'}
\end{equation}%
\begin{equation}
\boldsymbol{u}_{\varepsilon }=\boldsymbol{w}_{\varepsilon }\text{ on }\Gamma
_{12}^{\varepsilon }\times (0,T)  \label{1.4'}
\end{equation}%
where $\Omega _{2,T}^{\varepsilon }=\Omega _{2}^{\varepsilon }\times (0,T)$.
In Eq. (\ref{1.2'}), $\boldsymbol{w}_{\varepsilon }$ represents the fluid
displacement. In Section 5 we shall deal with the asymptotic behaviour of
the above reformulation of our problem. It follows at once from Theorem \ref%
{t2.1} that the above system possesses a unique solution $(\boldsymbol{u}%
_{\varepsilon },\boldsymbol{w}_{\varepsilon },p_{\varepsilon })$ such that $%
\boldsymbol{w}_{\varepsilon }\in W^{2,\infty }(0,T;H_{\varepsilon })\cap
H^{1}(0,T;V_{\varepsilon }^{2})$. Moreover it holds that

\begin{proposition}
\label{p2.1}The sequence $(\boldsymbol{u}^{\varepsilon })_{\varepsilon >0}$
is relatively compact in $H^{1}(0,T;L^{2}(\Omega )^{N})$.
\end{proposition}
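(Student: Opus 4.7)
The plan is to apply the Aubin--Lions--Simon compactness theorem twice, leveraging both the spatial $H^1_0$-regularity of $\boldsymbol{u}^\varepsilon$ and the time regularity supplied by Theorem \ref{t2.1}. The first ingredient is to see $\boldsymbol{u}^\varepsilon$ as a globally $H^1_0$ function: the interface condition (\ref{1.4}) together with the vanishing initial conditions (\ref{1.7}) gives, after integrating in time, $\boldsymbol{u}_\varepsilon = \boldsymbol{w}_\varepsilon$ on $\Gamma_{12}^\varepsilon \times (0,T)$, which is precisely (\ref{1.4'}). The traces of the two pieces thus match on the interface, so $\boldsymbol{u}^\varepsilon(t)\in H^1_0(\Omega)^N$ for a.e.\ $t$; likewise, $\partial_t \boldsymbol{u}^\varepsilon = \chi_1^\varepsilon \partial_t \boldsymbol{u}_\varepsilon + \chi_2^\varepsilon \boldsymbol{v}_\varepsilon$ lies in $H^1_0(\Omega)^N$ for a.e.\ $t$ directly from (\ref{1.4}).

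The uniform bounds from Theorem \ref{t2.1} then give the following. By (\ref{2.1}), $\sup_t \|\nabla \boldsymbol{u}_\varepsilon\|_{L^2(\Omega_1^\varepsilon)} \le C$; by Cauchy--Schwarz and (\ref{2.31}), $\|\nabla \boldsymbol{w}_\varepsilon(t)\|_{L^2(\Omega_2^\varepsilon)} \le T^{1/2}(\int_0^T \|\nabla \boldsymbol{v}_\varepsilon\|^2 ds)^{1/2} \le C$; hence $(\boldsymbol{u}^\varepsilon)$ is uniformly bounded in $L^\infty(0,T; H^1_0(\Omega)^N)$. Estimates (\ref{2.1})--(\ref{2.31}) likewise yield $(\partial_t \boldsymbol{u}^\varepsilon)$ bounded in $L^\infty(0,T;L^2(\Omega)^N)$, and (\ref{2.2}) gives $(\partial_t^2 \boldsymbol{u}^\varepsilon)$ bounded in $L^\infty(0,T;L^2(\Omega)^N)$. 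Since $H^1_0(\Omega)^N$ is compactly embedded in $L^2(\Omega)^N$, the Aubin--Lions--Simon theorem applied to the chain $H^1_0 \subset L^2 \subset L^2$ shows that $(\boldsymbol{u}^\varepsilon)$ is relatively compact in $C([0,T]; L^2(\Omega)^N)$, and hence in $L^2(0,T;L^2(\Omega)^N)$.

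The harder half of the statement is to promote this to relative compactness in $H^1(0,T;L^2(\Omega)^N)$, which requires strong $L^2(0,T;L^2(\Omega)^N)$-convergence of $\partial_t \boldsymbol{u}^\varepsilon$ along a subsequence. The natural route is a second application of Aubin--Lions--Simon, now to $(\partial_t \boldsymbol{u}^\varepsilon)$: by (\ref{2.2}) its own time derivative is already uniformly controlled in $L^\infty(0,T;L^2)$, so what remains is a uniform bound of $(\partial_t \boldsymbol{u}^\varepsilon)$ in some $L^p(0,T; X)$ with $X$ compactly embedded in $L^2(\Omega)^N$. The natural candidate is $X = H^1_0(\Omega)^N$: the fluid contribution $\chi_2^\varepsilon \nabla \boldsymbol{v}_\varepsilon$ to $\nabla \partial_t \boldsymbol{u}^\varepsilon$ is already controlled in $L^2(0,T;L^2)$ by (\ref{2.31}), while the solid piece $\chi_1^\varepsilon \nabla \partial_t \boldsymbol{u}_\varepsilon$ requires a uniform bound on $\int_0^T \|\nabla \partial_t \boldsymbol{u}_\varepsilon\|_{L^2(\Omega_1^\varepsilon)}^2\,dt$. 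This last bound should follow from an energy estimate on the time-differentiated version of (\ref{1.1}), exploiting the identity $\partial_t\int_0^t A_1^\varepsilon(t-\tau)\nabla \boldsymbol{u}_\varepsilon(\tau)\,d\tau = \int_0^t A_1^\varepsilon(t-\tau)\nabla \partial_\tau \boldsymbol{u}_\varepsilon(\tau)\,d\tau$ (valid because $\boldsymbol{u}_\varepsilon(0)=0$), testing against $\partial_t^2 \boldsymbol{u}_\varepsilon$, and applying Gronwall exactly as in the proof of Theorem \ref{t2.1}; the initial datum $\partial_t^2 \boldsymbol{u}_\varepsilon(0)=\boldsymbol{f}(0)$ is controlled in $L^2$ thanks to (A3), which gives $\boldsymbol{f}\in C([0,T];L^2(\Omega)^N)$. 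I expect this higher-order energy estimate, which couples the time-differentiated solid and fluid equations through the normal-stress interface condition (\ref{1.5}), to be the main technical obstacle.
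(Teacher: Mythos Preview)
Your approach coincides with the paper's: show that $\boldsymbol{u}^\varepsilon$ is uniformly bounded in $W=\{v: v,\partial_t v\in L^2(0,T;H^1_0(\Omega)^N),\ \partial_t^2 v\in L^2(0,T;L^2(\Omega)^N)\}$ and use the compact embedding $W\hookrightarrow H^1(0,T;L^2(\Omega)^N)$, your two Aubin--Lions--Simon applications being precisely how that embedding is established. You are in fact more careful than the paper, which simply asserts that the bound in $W$ follows from the estimates of Theorem~\ref{t2.1}; you correctly note that the uniform control of $\nabla\partial_t\boldsymbol{u}_\varepsilon$ in $L^2((0,T)\times\Omega_1^\varepsilon)$ is not among (\ref{2.1})--(\ref{2.3}) and must come from the time-differentiated energy estimate (implicit in the regularity claim $\boldsymbol{u}_\varepsilon\in W^{1,\infty}(0,T;V_\varepsilon^1)$, whose uniformity in $\varepsilon$ the paper does not spell out).
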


\begin{proof}
First, in view of interface conditions (\ref{1.4'}) and (\ref{1.5}) we have $%
\nabla \boldsymbol{u}^{\varepsilon }=\chi _{1}^{\varepsilon }\nabla 
\boldsymbol{u}_{\varepsilon }+\chi _{2}^{\varepsilon }\nabla \boldsymbol{w}%
_{\varepsilon }$. Next, the outer boundary conditions (\ref{1.6}) imply $%
\boldsymbol{u}^{\varepsilon }=0$ on $\partial \Omega $. Thus $\boldsymbol{u}%
^{\varepsilon }\in W^{1,\infty }(0,T;H_{0}^{1}(\Omega )^{N})$. Besides, the
inequalities in Theorem \ref{t2.1} ensure that the sequence $(\boldsymbol{u}%
^{\varepsilon })_{\varepsilon >0}$ is bounded in the space 
\begin{equation*}
W=\left\{ \boldsymbol{v}\in L^{2}(0,T;H_{0}^{1}(\Omega )^{N}):\frac{\partial 
\boldsymbol{v}}{\partial t}\in L^{2}(0,T;H_{0}^{1}(\Omega )^{N})\text{ and }%
\frac{\partial ^{2}\boldsymbol{v}}{\partial t^{2}}\in L^{2}(0,T;L^{2}(\Omega
)^{N})\right\} ,
\end{equation*}%
and the latter space is compactly embedded in $H^{1}(0,T;L^{2}(\Omega )^{N})$%
. The result follows thereby.
\end{proof}

\section{Almost periodic functions}

Let $\mathcal{B}(\mathbb{R}^{N})$ denote the Banach space of all bounded
continuous (real-valued) functions on $\mathbb{R}^{N}$ endowed with the $%
\sup $ norm topology.

A mapping $u:\mathbb{R}^{N}\rightarrow \mathbb{R}$ is called an almost
periodic function if $u\in \mathcal{B}(\mathbb{R}^{N})$ and further the set
of all its translates $\{u(\cdot +a)\}_{a\in \mathbb{R}^{N}}$ has a compact
closure in $\mathcal{B}(\mathbb{R}^{N})$. We denote by \textrm{$AP$}$(%
\mathbb{R}^{N})$ the set of all continuous almost periodic functions on $%
\mathbb{R}^{N}$. $\mathrm{AP}(\mathbb{R}^{N})$ is a commutative $\mathcal{C}%
^{\ast }$-algebra with identity. An argument due to Bohr \cite{Bohr} (see
also \cite{Besicovitch}) specifies that a function $u\in \mathrm{AP}(\mathbb{%
R}^{N})$ if and only if $u$ may be uniformly approximated by finite linear
combinations of functions in the set $\{\cos (k\cdot y),\,\sin (k\cdot
y):k\in \mathbb{R}^{N}\}$. It emerges from the above equivalent definition
that $\mathrm{AP}(\mathbb{R}^{N})$ consists of uniformly continuous
functions. Furthermore, it is now well-known that $\mathrm{AP}(\mathbb{R}%
^{N})$ enjoys the following properties:

\begin{itemize}
\item[(P)$_{1}$] $u(\cdot +a)\in \mathrm{AP}(\mathbb{R}^{N})$ whenever $u\in 
\mathrm{AP}(\mathbb{R}^{N})$ and for every $a\in \mathbb{R}^{N}$;

\item[(P)$_{2}$] For each $u\in \mathrm{AP}(\mathbb{R}^{N})$ the closed
convex hull of $\{u(\cdot +a)\}_{a\in \mathbb{R}^{N}}$ in $\mathcal{B}(%
\mathbb{R}^{N})$ contains a unique complex constant $M(u)$ called the mean
value of $u$, and which satisfies the property that the sequence $%
(u^{\varepsilon })_{\varepsilon >0}$ (where $u^{\varepsilon
}(x)=u(x/\varepsilon )$, $x\in \mathbb{R}^{N}$) weakly $\ast $-converges in $%
L^{\infty }(\mathbb{R}^{N})$ to $M(u)$ as $\varepsilon \rightarrow 0$.
Moreover the following properties of $M$ are in order:

\begin{itemize}
\item[(i)] $M$ is nonnegative, i.e., $M(u)\geq 0$ for any $u\in \mathrm{AP}(%
\mathbb{R}^{N})$ with $u\geq 0$;

\item[(ii)] $M$ is continuous on $\mathrm{AP}(\mathbb{R}^{N})$;

\item[(iii)] $M(1)=1$;

\item[(iv)] $M$ is translation invariant, i.e., $M(u(\cdot +a))=M(u)$ for
all $u\in \mathrm{AP}(\mathbb{R}^{N})$ and all $a\in \mathbb{R}^{N}$.
\end{itemize}
\end{itemize}

We infer from the above properties that $\mathrm{AP}(\mathbb{R}^{N})$ is an 
\textit{algebra with mean value} on $\mathbb{R}^{N}$ \cite{Jikov}. We denote
by $\mathcal{K}$ its spectrum and by $\mathcal{G}$ the Gelfand
transformation on \textrm{$AP$}$(\mathbb{R}^{N})$. We recall that $\mathcal{K%
}$ is the set of all nonzero multiplicative linear forms on $\mathrm{AP}(%
\mathbb{R}^{N})$, and $\mathcal{G}$ is the mapping of $\mathrm{AP}(\mathbb{R}%
^{N})$ into $\mathcal{C}(\mathcal{K})$ such that $\mathcal{G}(u)(s)=s(u)$ ($%
s\in \mathcal{K}$, $u\in \mathrm{AP}(\mathbb{R}^{N})$). The image $\mathcal{G%
}(u)$ of $u$ will very often be denoted by $\widehat{u}$. We endow $\mathcal{%
K}$ with the relative weak$\ast $ topology on $(\mathrm{AP}(\mathbb{R}%
^{N}))^{\prime }$ (the topological dual of $\mathrm{AP}(\mathbb{R}^{N})$).
In this topology, $\mathcal{G}$ is an isometric $\ast $-isomorphism of the $%
\mathcal{C}^{\ast }$-algebra $\mathrm{AP}(\mathbb{R}^{N})$ onto the $%
\mathcal{C}^{\ast }$-algebra $\mathcal{C}(\mathcal{K})$. $\mathcal{K}$ being
topologized as above, the mapping $j:\mathbb{R}^{N}\rightarrow \mathcal{K}$
given by $j(y)=\delta _{y}$ (the Dirac mass at $y$) is continuous with dense
range.

In view of the properties (i)-(iii) in (P)$_{2}$, the mean value admits an
integral representation with respect to some Radon measure \cite{Hom1} $%
\beta $ (of total mass $1$) as follows: 
\begin{equation*}
M(u)=\int_{\mathcal{K}}\mathcal{G}(u)d\beta \text{\ \ for all }u\in \mathrm{%
AP}(\mathbb{R}^{N})\text{.}
\end{equation*}

The following important result is worth recalling.

\begin{theorem}
\label{t2.1'}The topological space $\mathcal{K}$ can be provided with a
group operation under which it is a compact topological Abelian group
additively written whose the Haar measure is precisely the Radon measure $%
\beta $. Moreover, the mapping $j:\mathbb{R}^{N}\rightarrow \mathcal{K}$
given by $j(y)=\delta _{y}$ is a continuous group homomorphism satisfying
the property 
\begin{equation}
\widehat{\tau _{y}u}=\tau _{j(y)}\widehat{u}\text{, all }u\in \mathrm{AP}(%
\mathbb{R}^{N})\text{ and all }y\in \mathbb{R}^{N}  \label{2.1'}
\end{equation}%
where $\widehat{\cdot }$ denotes the Gelfand transformation on $\mathrm{AP}(%
\mathbb{R}^{N})$, $\tau _{y}u=u(\cdot +y)$ and $\tau _{j(y)}\widehat{u}=%
\widehat{u}(\cdot +\delta _{y})$.
\end{theorem}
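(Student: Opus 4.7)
The plan is to build the group structure on $\mathcal{K}$ by Gelfand-dualising the translation action of $\mathbb{R}^{N}$ on $\mathrm{AP}(\mathbb{R}^{N})$. First I would observe that for each $y\in\mathbb{R}^{N}$ the translation $\tau_{y}u=u(\cdot+y)$ is a unital $\ast$-algebra automorphism of $\mathrm{AP}(\mathbb{R}^{N})$ by property~(P)$_{1}$, hence its Gelfand transpose $s\mapsto s\circ\tau_{y}$ is a homeomorphism of $\mathcal{K}$. The pivotal auxiliary lemma is that for any $s\in\mathcal{K}$ and $u\in\mathrm{AP}(\mathbb{R}^{N})$, the function
\[
\phi_{u,s}:\mathbb{R}^{N}\to\mathbb{R},\qquad \phi_{u,s}(y)=s(\tau_{y}u),
\]
again lies in $\mathrm{AP}(\mathbb{R}^{N})$: uniform continuity of $u$ makes $y\mapsto\tau_{y}u$ continuous into $\mathcal{B}(\mathbb{R}^{N})$, so $\phi_{u,s}$ is bounded and continuous, and relative compactness of $\{\tau_{a}u\}_{a}$ in $\mathcal{B}(\mathbb{R}^{N})$ combined with the continuity of $s$ forces the translates $\{\phi_{u,s}(\cdot+a)\}_{a}$ to be relatively compact as well.

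I then define
\[
(s+t)(u)=s(\phi_{u,t}),\qquad s,t\in\mathcal{K},\ u\in\mathrm{AP}(\mathbb{R}^{N}).
\]
Linearity is automatic; multiplicativity follows from $\tau_{y}(uv)=\tau_{y}u\cdot\tau_{y}v$ together with the multiplicativity of $s$ and $t$, so $s+t\in\mathcal{K}$. For $s=j(a)$, $t=j(b)$ a direct computation gives $(j(a)+j(b))(u)=\phi_{u,j(b)}(a)=u(a+b)=j(a+b)(u)$, which identifies $\delta_{0}=j(0)$ as the neutral element and shows $j$ is a homomorphism, making $j(\mathbb{R}^{N})$ a dense subgroup of $\mathcal{K}$. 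Associativity and commutativity are first verified on $j(\mathbb{R}^{N})$ and then transferred to all of $\mathcal{K}$ by the separate continuity of $+$ in the weak-$\ast$ topology. Inverses are supplied by the involution $\iota u(x)=u(-x)$, which is a $\ast$-automorphism of $\mathrm{AP}(\mathbb{R}^{N})$; its Gelfand transpose $\iota^{\ast}$ satisfies $\iota^{\ast}j(a)=j(-a)$, and the defining identity $s+\iota^{\ast}s=\delta_{0}$ is checked on the dense subgroup and extended by continuity.

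Identity (\ref{2.1'}) is then immediate from the construction: $\widehat{\tau_{y}u}(s)=s(\tau_{y}u)=(s+j(y))(u)=\widehat{u}(s+\delta_{y})=(\tau_{j(y)}\widehat{u})(s)$, since $\phi_{u,j(y)}=\tau_{y}u$ by direct evaluation. For the Haar measure statement, I would invoke the translation invariance~(iv) of the mean $M$: rewritten through the Gelfand transform and (\ref{2.1'}) it states $\int_{\mathcal{K}}\tau_{j(y)}\widehat{u}\,d\beta=\int_{\mathcal{K}}\widehat{u}\,d\beta$ for every $y\in\mathbb{R}^{N}$. Density of $j(\mathbb{R}^{N})$ in $\mathcal{K}$ together with continuity of the translation map extends the invariance to all $s\in\mathcal{K}$, and uniqueness of the normalised Haar measure on the compact group $\mathcal{K}$ forces $\beta$ to coincide with it.

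The main technical hurdle I foresee is upgrading separate continuity of $+$ on $\mathcal{K}\times\mathcal{K}$ to joint continuity, which is necessary to conclude $\mathcal{K}$ is a genuine topological group and to propagate algebraic identities from $j(\mathbb{R}^{N})$ to all of $\mathcal{K}$. I would obtain this from an equicontinuity estimate for the family $\{t\mapsto(s+t)(u)\}_{s\in\mathcal{K}}$, itself a consequence of the total boundedness of $\{\phi_{u,t}(\cdot+a)\}_{a}$, taken uniformly in $t$ by another compactness argument on $\mathcal{K}$. Everything else is routine Gelfand theory combined with the general principle that a dense subgroup together with joint continuity determines both the group structure and the invariant measure on a compact Hausdorff extension.
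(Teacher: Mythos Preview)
Your argument is sound and supplies a complete construction where the paper gives none: the paper's proof consists solely of citations, deferring the group structure on $\mathcal{K}$ to Deleeuw--Glicksberg \cite{DG} and identity~(\ref{2.1'}) to \cite{SW}. What you have written is essentially the classical realisation of the Bohr compactification of $\mathbb{R}^{N}$ as the Gelfand spectrum of $\mathrm{AP}(\mathbb{R}^{N})$, which is precisely the content of those references, so in substance the approaches coincide. One remark on the hurdle you flag at the end: your transfer of associativity and commutativity from $j(\mathbb{R}^{N})$ via \emph{separate} continuity is valid as stated, but the inverse identity $s+\iota^{\ast}s=\delta_{0}$ genuinely needs more, since $s$ occurs in both slots. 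Rather than the ad hoc equicontinuity estimate you propose, two cleaner routes are available. First, once associativity, commutativity and the neutral element are in place you already have a group with a separately continuous operation on a compact Hausdorff space, and Ellis' theorem upgrades this to joint continuity automatically; the inverse identity then follows by density. Second, and more elementarily, observe that on the exponentials $\gamma_{k}(y)=\exp(ik\cdot y)$ your definition yields $(s+t)(\gamma_{k})=s(\gamma_{k})\,t(\gamma_{k})$, which is manifestly jointly continuous in $(s,t)$; density of trigonometric polynomials together with the uniform bound $\lVert s+t\rVert\le 1$ then gives joint continuity on all of $\mathrm{AP}(\mathbb{R}^{N})$, and the same character formula verifies $s+\iota^{\ast}s=\delta_{0}$ in one line since $s(\gamma_{k})s(\gamma_{-k})=s(1)=1$.
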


\begin{proof}
The fact that $\mathcal{K}$ can be provided with a group operation is
classically known; see e.g. \cite{DG}. For the proof of (\ref{2.1'}), see
the proof of Proposition 1 in \cite{SW}.
\end{proof}

In the sequel, $\mathcal{K}$ is viewed as a compact Abelian group additively
written, equipped with the normalized Haar measure $\beta $. Hence, as it is
customary, we shall often write $ds$ for $d\beta $.

\begin{remark}
\label{r2.1'}\emph{The group operation defined on }$\mathcal{K}$\emph{\ is
actually the continuous extension of the mapping }$\mathcal{K}\times 
\mathcal{K}\rightarrow \mathcal{K}$\emph{\ defined by }$(\delta _{x},\delta
_{y})\mapsto \delta _{x+y}$\emph{. Since it is additively written, we are
justified in writing }$\delta _{x}+\delta _{y}:=\delta _{x+y}$\emph{\ for }$%
x,y\in \mathbb{R}^{N}$\emph{. Hence, denoting by }$-s$\emph{\ the
symmetrization of }$s\in \mathcal{K}$\emph{, we have in particular }$-\delta
_{y}:=\delta _{-y}$\emph{.}
\end{remark}

Next, we introduce the space $\mathrm{AP}^{\infty }(\mathbb{R}^{N})=\{u\in 
\mathrm{AP}(\mathbb{R}^{N}):D_{y}^{\alpha }u\in \mathrm{AP}(\mathbb{R}^{N})$
for every $\alpha =(\alpha _{1},\ldots ,\alpha _{N})\in \mathbb{N}^{N}\}$
where $D_{y}^{\alpha }=\frac{\partial ^{\left\vert \alpha \right\vert }}{%
\partial y_{1}^{\alpha _{1}}\ldots \partial y_{N}^{\alpha _{N}}}$.

Now, let $B_{\mathrm{AP}}^{p}(\mathbb{R}^{N})$ ($1\leq p<\infty $) denote
the space of Besicovitch almost periodic functions on $\mathbb{R}^{N}$, that
is the closure of $\mathrm{AP}(\mathbb{R}^{N})$ with respect to the
Besicovitch seminorm 
\begin{equation*}
\left\Vert u\right\Vert _{p}=\left( \underset{r\rightarrow +\infty }{\lim
\sup }\frac{1}{\left\vert B_{r}\right\vert }\int_{B_{r}}\left\vert
u(y)\right\vert ^{p}dy\right) ^{1/p}
\end{equation*}%
where $B_{r}$ is the open ball of $\mathbb{R}^{N}$ of radius $r$ centered at
the origin. It is known that $B_{\mathrm{AP}}^{p}(\mathbb{R}^{N})$ is a
complete seminormed vector space verifying $B_{\mathrm{AP}}^{q}(\mathbb{R}%
^{N})\subset B_{\mathrm{AP}}^{p}(\mathbb{R}^{N})$ for $1\leq p\leq q<\infty $%
. Using this last property one may naturally define the space $B_{\mathrm{AP}%
}^{\infty }(\mathbb{R}^{N})$ as follows: 
\begin{equation*}
B_{\mathrm{AP}}^{\infty }(\mathbb{R}^{N})=\{f\in \cap _{1\leq p<\infty }B_{%
\mathrm{AP}}^{p}(\mathbb{R}^{N}):\sup_{1\leq p<\infty }\left\Vert
f\right\Vert _{p}<\infty \}\text{.}\;\;\;\;\;\;\;\;\;
\end{equation*}%
We endow $B_{\mathrm{AP}}^{\infty }(\mathbb{R}^{N})$ with the seminorm $%
\left[ f\right] _{\infty }=\sup_{1\leq p<\infty }\left\Vert f\right\Vert
_{p} $, which makes it a complete seminormed space. We recall that the
spaces $B_{\mathrm{AP}}^{p}(\mathbb{R}^{N})$ ($1\leq p\leq \infty $) are not
Fr\'{e}chet spaces since they are not separated. The following properties
are worth noticing \cite[Subsection 2.2]{CMP} (see in particular Theorem 2.6
therein); see also \cite[Subsection 2.2]{NA}:

\begin{itemize}
\item[(\textbf{1)}] The Gelfand transformation $\mathcal{G}:\mathrm{AP}(%
\mathbb{R}^{N})\rightarrow \mathcal{C}(\mathcal{K})$ extends by continuity
to a unique continuous linear mapping, still denoted by $\mathcal{G}$, of $%
B_{\mathrm{AP}}^{p}(\mathbb{R}^{N})$ into $L^{p}(\mathcal{K})$, which in
turn induces an isometric isomorphism $\mathcal{G}_{1}$, of $B_{\mathrm{AP}%
}^{p}(\mathbb{R}^{N})/\mathcal{N}=\mathcal{B}_{\mathrm{AP}}^{p}(\mathbb{R}%
^{N})$ onto $L^{p}(\mathcal{K})$ (where $\mathcal{N}=\{u\in B_{\mathrm{AP}%
}^{p}(\mathbb{R}^{N}):\mathcal{G}(u)=0\}$). Moreover if $u\in B_{\mathrm{AP}%
}^{p}(\mathbb{R}^{N})\cap L^{\infty }(\mathbb{R}^{N})$ then $\mathcal{G}%
(u)\in L^{\infty }(\mathcal{K})$ and $\left\Vert \mathcal{G}(u)\right\Vert
_{L^{\infty }(\mathcal{K})}\leq \left\Vert u\right\Vert _{L^{\infty }(%
\mathbb{R}^{N})}$.

\item[(\textbf{2)}] The mean value $M$, defined on $\mathrm{AP}(\mathbb{R}%
^{N})$, extends by continuity to a positive continuous linear form (still
denoted by $M$) on $B_{\mathrm{AP}}^{p}(\mathbb{R}^{N})$ satisfying $%
M(u)=\int_{\mathcal{K}}\mathcal{G}(u)d\beta $ and $M(u(\cdot +a))=M(u)$ for
each $u\in B_{\mathrm{AP}}^{p}(\mathbb{R}^{N})$ and all $a\in \mathbb{R}^{N}$%
. Moreover for $u\in B_{\mathrm{AP}}^{p}(\mathbb{R}^{N})$ we have $%
\left\Vert u\right\Vert _{p}=\left[ M(\left\vert u\right\vert ^{p})\right]
^{1/p}$.
\end{itemize}

Spaces of almost periodic functions with values in a Banach space are
defined in a natural way, we refer to \cite{Blot} for details. Keep the
following notations in mind: $\mathrm{AP}(\mathbb{R}^{N};\mathbb{R})=\mathrm{%
AP}(\mathbb{R}^{N})$ and $B_{\mathrm{AP}}^{p}(\mathbb{R}^{N};\mathbb{R})=B_{%
\mathrm{AP}}^{p}(\mathbb{R}^{N})$.

Let $\mathbb{R}_{y,\tau }^{N+1}=\mathbb{R}_{y}^{N}\times \mathbb{R}_{\tau }$
denote the space $\mathbb{R}^{N}\times \mathbb{R}$ with generic variables $%
(y,\tau )$. It holds that $\mathrm{AP}(\mathbb{R}_{y,\tau }^{N+1})=\mathrm{AP%
}(\mathbb{R}_{\tau };\mathrm{AP}(\mathbb{R}_{y}^{N}))$ is the closure in $%
\mathcal{B}(\mathbb{R}_{y,\tau }^{N+1})$ of the tensor product $\mathrm{AP}(%
\mathbb{R}_{y}^{N})\otimes \mathrm{AP}(\mathbb{R}_{\tau })$ \cite[%
Proposition 2.3]{CMP}. To avoid heaviness of notation, we may sometimes set $%
A_{y}=\mathrm{AP}(\mathbb{R}_{y}^{N})$, $A_{\tau }=\mathrm{AP}(\mathbb{R}%
_{\tau })$ and $A=\mathrm{AP}(\mathbb{R}_{y,\tau }^{N+1})$. Correspondingly,
we will denote the mean value on $A_{\xi }$ ($\xi =y,\tau $) by $M_{\xi }$.

Now let $1\leq p<\infty $ and consider the $N$-parameter group of isometries 
$\{T(y):y\in \mathbb{R}^{N}\}$ defined by 
\begin{equation*}
T(y):\mathcal{B}_{\mathrm{AP}}^{p}(\mathbb{R}^{N})\rightarrow \mathcal{B}_{%
\mathrm{AP}}^{p}(\mathbb{R}^{N})\text{,\ }T(y)(u+\mathcal{N})=u(\cdot +y)+%
\mathcal{N}\text{ for }u\in B_{\mathrm{AP}}^{p}(\mathbb{R}^{N}).
\end{equation*}%
Since $\mathrm{AP}(\mathbb{R}^{N})$ consists of uniformly continuous
functions, $\{T(y):y\in \mathbb{R}^{N}\}$ is a strongly continuous group in
the following sense: $T(y)(u+\mathcal{N})\rightarrow u+\mathcal{N}$ in $%
\mathcal{B}_{\mathrm{AP}}^{p}(\mathbb{R}^{N})$ as $\left\vert y\right\vert
\rightarrow 0$. Using the isometric isomorphism $\mathcal{G}_{1}$ we
associated to $\{T(y):y\in \mathbb{R}^{N}\}$ the following $N$-parameter
group $\{\overline{T}(y):y\in \mathbb{R}^{N}\}$ defined by 
\begin{equation*}
\begin{array}{l}
\overline{T}(y):L^{p}(\mathcal{K})\rightarrow L^{p}(\mathcal{K}) \\ 
\overline{T}(y)\mathcal{G}_{1}(u+\mathcal{N})=\mathcal{G}_{1}(T(y)(u+%
\mathcal{N}))=\mathcal{G}_{1}(u(\cdot +y)+\mathcal{N})\text{\ for }u\in B_{%
\mathrm{AP}}^{p}(\mathbb{R}^{N})\text{.}%
\end{array}%
\end{equation*}%
The group $\{\overline{T}(y):y\in \mathbb{R}^{N}\}$ is also strongly
continuous. The infinitesimal generator of $T(y)$ (resp. $\overline{T}(y)$)
along the $i$th coordinate direction, denoted by $\overline{\partial }%
/\partial y_{i}$ (resp. $\partial _{i}$) is defined by 
\begin{equation*}
\begin{array}{l}
\frac{\overline{\partial }u}{\partial y_{i}}=\lim_{s\rightarrow
0}s^{-1}\left( T(se_{i})u-u\right) \text{\ in }\mathcal{B}_{\mathrm{AP}}^{p}(%
\mathbb{R}^{N})\text{ } \\ 
\text{(resp. }\partial _{i}v=\lim_{s\rightarrow 0}s^{-1}\left( \overline{T}%
(se_{i})v-v\right) \text{\ in }L^{p}(\mathcal{K})\text{),}%
\end{array}%
\end{equation*}%
where we have used the same letter $u$ to denote the equivalence class of an
element $u\in B_{\mathrm{AP}}^{p}(\mathbb{R}^{N})$ in $\mathcal{B}_{\mathrm{%
AP}}^{p}(\mathbb{R}^{N})$, $e_{i}=(\delta _{ij})_{1\leq j\leq N}$ ($\delta
_{ij}$ being the Kronecker $\delta $). We collect here below some properties
and spaces attached to $\overline{\partial }/\partial y_{i}$ and $\partial
_{i}$. We refer the reader to e.g. \cite[Subsection 3.1]{JMS} for details.
First, we denote by $\varrho $ the canonical mapping of $B_{\mathrm{AP}}^{p}(%
\mathbb{R}^{N})$ onto $\mathcal{B}_{\mathrm{AP}}^{p}(\mathbb{R}^{N})$, that
is, $\varrho (u)=u+\mathcal{N}$ for $u\in B_{\mathrm{AP}}^{p}(\mathbb{R}%
^{N}) $.

Let $1\leq i\leq N$. Then

\begin{enumerate}
\item If $u\in \mathrm{AP}(\mathbb{R}^{N})$ is such that $\frac{\partial u}{%
\partial y_{i}}\in \mathrm{AP}(\mathbb{R}^{N})$, then 
\begin{equation}
\frac{\overline{\partial }}{\partial y_{i}}(\varrho (u))=\varrho \left( 
\frac{\partial u}{\partial y_{i}}\right) .\ \ \ \ \ \ \ \ \ \ \ \ \ \ \ \ \
\ \ \ \ \ \ \ \ \ \ \ \ \ \ \ \   \label{2.2'}
\end{equation}

\item $\mathcal{G}_{1}\circ \frac{\overline{\partial }}{\partial y_{i}}%
=\partial _{i}\circ \mathcal{G}_{1}$ provided that these mappings make sense.
\end{enumerate}

We define the Sobolev type space $\mathcal{B}_{\mathrm{AP}}^{1,p}(\mathbb{R}%
^{N})$ and its smooth counter-part $\mathcal{D}_{\mathrm{AP}}(\mathbb{R}%
^{N}) $ as follows:%
\begin{equation*}
\mathcal{B}_{\mathrm{AP}}^{1,p}(\mathbb{R}^{N})=\{u\in \mathcal{B}_{\mathrm{%
AP}}^{p}(\mathbb{R}^{N}):\frac{\overline{\partial }u}{\partial y_{i}}\in 
\mathcal{B}_{\mathrm{AP}}^{p}(\mathbb{R}^{N})\ \forall 1\leq i\leq N\}
\end{equation*}%
and 
\begin{equation*}
\mathcal{D}_{\mathrm{AP}}(\mathbb{R}^{N})=\varrho (\mathrm{AP}^{\infty }(%
\mathbb{R}^{N})).
\end{equation*}%
First we have that $\mathcal{D}_{\mathrm{AP}}(\mathbb{R}^{N})$ is dense in $%
\mathcal{B}_{\mathrm{AP}}^{p}(\mathbb{R}^{N})$, $1\leq p<\infty $. Next,
endowed with the norm 
\begin{equation*}
\left\Vert u\right\Vert _{1,p}=\left( \left\Vert u\right\Vert
_{p}^{p}+\sum_{i=1}^{N}\left\Vert \frac{\overline{\partial }u}{\partial y_{i}%
}\right\Vert _{p}^{p}\right) ^{1/p}\ \ (u\in \mathcal{B}_{\mathrm{AP}}^{1,p}(%
\mathbb{R}^{N})),
\end{equation*}%
$\mathcal{B}_{\mathrm{AP}}^{1,p}(\mathbb{R}^{N})$ is a Banach space
admitting $\mathcal{D}_{\mathrm{AP}}(\mathbb{R}^{N})$ as a dense subspace.
We still write $\widehat{u}$ either for $\mathcal{G}(u)$ if $u\in B_{\mathrm{%
AP}}^{p}(\mathbb{R}^{N})$ or for $\mathcal{G}_{1}(u)$ if $u\in \mathcal{B}_{%
\mathrm{AP}}^{p}(\mathbb{R}^{N})$.

We now define the appropriate space of correctors. Prior to that, we set 
\begin{equation*}
\mathcal{B}_{\mathrm{AP}}^{1,p}(\mathbb{R}^{N})/\mathbb{R}=\{u\in \mathcal{B}%
_{\mathrm{AP}}^{1,p}(\mathbb{R}^{N}):M(u)=0\}.
\end{equation*}%
We endow it with the norm 
\begin{equation*}
\left\Vert u\right\Vert _{\#,p}=\left( \sum_{i=1}^{N}\left\Vert \frac{%
\overline{\partial }u}{\partial y_{i}}\right\Vert _{p}^{p}\right) ^{1/p}\ \
(u\in \mathcal{B}_{\mathrm{AP}}^{1,p}(\mathbb{R}^{N})/\mathbb{R}).
\end{equation*}%
Under this norm $\mathcal{B}_{\mathrm{AP}}^{1,p}(\mathbb{R}^{N})/\mathbb{R}$
is unfortunately not complete. We denote by $\mathcal{B}_{\#\mathrm{AP}%
}^{1,p}(\mathbb{R}^{N})$ its completion with respect to the above norm and
by $J$ the canonical embedding of $\mathcal{B}_{\mathrm{AP}}^{1,p}(\mathbb{R}%
^{N})/\mathbb{R}$ into $\mathcal{B}_{\#\mathrm{AP}}^{1,p}(\mathbb{R}^{N})$.
It can be easily checked that $\mathcal{D}_{\mathrm{AP}}(\mathbb{R}^{N})/%
\mathbb{R}=\{u\in \mathcal{D}_{\mathrm{AP}}(\mathbb{R}^{N}):M(u)=0\}$ is
dense in $\mathcal{B}_{\mathrm{AP}}^{1,p}(\mathbb{R}^{N})/\mathbb{R}$. The
following hold true:

\begin{itemize}
\item[(P$_{1}$)] The gradient operator $\overline{\nabla }_{y}=\left( \frac{%
\overline{\partial }}{\partial y_{1}},...,\frac{\overline{\partial }}{%
\partial y_{N}}\right) :\mathcal{B}_{\mathrm{AP}}^{1,p}(\mathbb{R}^{N})/%
\mathbb{R}\rightarrow (\mathcal{B}_{\mathrm{AP}}^{p}(\mathbb{R}^{N}))^{N}$
extends by continuity to a unique mapping denoted by $\widetilde{\nabla }%
_{y}=\left( \frac{\widetilde{\partial }}{\partial y_{i}}\right) _{1\leq
i\leq N}:\mathcal{B}_{\#\mathrm{AP}}^{1,p}(\mathbb{R}^{N})\rightarrow (%
\mathcal{B}_{\mathrm{AP}}^{p}(\mathbb{R}^{N}))^{N}$ with the properties 
\begin{equation*}
\frac{\overline{\partial }}{\partial y_{i}}=\frac{\widetilde{\partial }}{%
\partial y_{i}}\circ J
\end{equation*}%
and 
\begin{equation*}
\left\Vert u\right\Vert _{\#,p}=\left( \sum_{i=1}^{N}\left\Vert \frac{%
\widetilde{\partial }u}{\partial y_{i}}\right\Vert _{p}^{p}\right) ^{1/p}\ \ 
\text{for }u\in \mathcal{B}_{\#\mathrm{AP}}^{1,p}(\mathbb{R}^{N}).
\end{equation*}

\item[(P$_{2}$)] The space $J(\mathcal{D}_{\mathrm{AP}}(\mathbb{R}^{N})/%
\mathbb{C})$ is dense in $\mathcal{B}_{\#\mathrm{AP}}^{1,p}(\mathbb{R}^{N})$.

\item[(P$_{3}$)] The mapping $\widetilde{\nabla }_{y}$ is an isometric
embedding of $\mathcal{B}_{\#\mathrm{AP}}^{1,p}(\mathbb{R}^{N})$ onto a
closed subspace of $(\mathcal{B}_{\mathrm{AP}}^{p}(\mathbb{R}^{N}))^{N}$, so
that $\mathcal{B}_{\#\mathrm{AP}}^{1,p}(\mathbb{R}^{N})$ is a reflexive
Banach space. By duality we define the divergence operator $\overline{\Div}%
_{y}:(\mathcal{B}_{\mathrm{AP}}^{p^{\prime }}(\mathbb{R}^{N}))^{N}%
\rightarrow (\mathcal{B}_{\#\mathrm{AP}}^{1,p}(\mathbb{R}^{N}))^{\prime }$ ($%
p^{\prime }=p/(p-1)$) by 
\begin{equation*}
\left\langle \overline{\Div}_{y}u,v\right\rangle =-\left\langle u,\widetilde{%
\nabla }_{y}v\right\rangle \text{\ for }v\in \mathcal{B}_{\#\mathrm{AP}%
}^{1,p}(\mathbb{R}^{N})\text{ and }u=(u_{i})\in (\mathcal{B}_{\mathrm{AP}%
}^{p^{\prime }}(\mathbb{R}^{N}))^{N}\text{,}
\end{equation*}%
where $\left\langle u,\widetilde{\nabla }_{y}v\right\rangle
=\sum_{i=1}^{N}\int_{\mathcal{K}}\widehat{u}_{i}\partial _{i}\widehat{v}%
d\beta $.
\end{itemize}

We may also define the Laplacian operator on $\mathcal{B}_{\mathrm{AP}%
}^{1,p^{\prime }}(\mathbb{R}^{N})$ (denoted here by $\overline{\Delta }_{y}$%
) as follows: 
\begin{equation*}
\left\langle \overline{\Delta }_{y}w,v\right\rangle =\left\langle \overline{%
\Div}_{y}(\overline{\nabla }_{y}w),v\right\rangle =-\left\langle \overline{%
\nabla }_{y}w,\widetilde{\nabla }_{y}v\right\rangle \text{\ for all }v\in 
\mathcal{B}_{\#\mathrm{AP}}^{1,p}(\mathbb{R}^{N}).
\end{equation*}%
Then it is immediate that $\overline{\Delta }_{y}\varrho (u)=\varrho (\Delta
_{y}u)$ for all $u\in \mathrm{AP}^{\infty }(\mathbb{R}^{N})$ where $\Delta
_{y}$ stands for the usual Laplacian operator on $\mathbb{R}_{y}^{N}$.

Let us finally give a word of warning. Since $J$ is an embedding, this
allows us to view $\mathcal{B}_{\mathrm{AP}}^{1,p}(\mathbb{R}^{N})/\mathbb{R}
$ (and hence $\mathcal{D}_{\mathrm{AP}}(\mathbb{R}^{N})/\mathbb{R}$) as a
dense subspace of $\mathcal{B}_{\#\mathrm{AP}}^{1,p}(\mathbb{R}^{N})$. Thus
we shall henceforth omit $J$ in the notation if it is understood from the
context and there is no risk of confusion. Accordingly, $\widetilde{\nabla }%
_{y}$ and $\frac{\widetilde{\partial }}{\partial y_{i}}$ shall still be
denoted by $\overline{\nabla }_{y}$ and $\frac{\overline{\partial }}{%
\partial y_{i}}$ respectively.

\section{The $\Sigma $-convergence}

The letter $E$ will throughout denote any ordinary sequence $(\varepsilon
_{n})_{n\in \mathbb{N}}$ with $0<\varepsilon _{n}\leq 1$ such that $%
\varepsilon _{n}\rightarrow 0$ as $n\rightarrow \infty $. We shall denote "$%
\varepsilon _{n}\rightarrow 0$ as $n\rightarrow \infty $" merely by "$%
\varepsilon \rightarrow 0$". In what follows, we use the same notation as in
the preceding section. We assume in this subsection that $\Omega $ is any
open subset of $\mathbb{R}^{N}$.

\begin{definition}
\label{d2.1'}\emph{Let }$1\leq p<\infty $\emph{. (1) A sequence }$%
(u_{\varepsilon })_{\varepsilon >0}\subset L^{p}(\Omega )$\emph{\ is said to 
}weakly $\Sigma $-converge\emph{\ in }$L^{p}(\Omega )$\emph{\ to some }$%
u_{0}\in L^{p}(\Omega ;\mathcal{B}_{\mathrm{AP}}^{p}(\mathbb{R}^{N}))$\emph{%
\ if as }$\varepsilon \rightarrow 0$\emph{, we have } 
\begin{equation*}
\int_{\Omega }u_{\varepsilon }(x)f\left( x,\frac{x}{\varepsilon }\right)
dx\rightarrow \iint_{\Omega \times \mathcal{K}}\widehat{u}_{0}(x,s)\widehat{f%
}(x,s)dxds
\end{equation*}%
\emph{for every }$f\in L^{p^{\prime }}(\Omega ;\mathrm{AP}(\mathbb{R}^{N}))$%
\emph{\ (}$1/p^{\prime }=1-1/p$\emph{), where }$\widehat{u}_{0}=\mathcal{G}%
_{1}\circ u_{0}$\emph{\ and }$\widehat{f}=\mathcal{G}\circ f$\emph{. We
express this by writing} $u_{\varepsilon }\rightarrow u_{0}$ in $%
L^{p}(\Omega )$-weak $\Sigma $.

\emph{(2) The sequence }$(u_{\varepsilon })_{\varepsilon >0}\subset
L^{p}(\Omega )$\emph{\ is said to }strongly $\Sigma $-converge\emph{\ in }$%
L^{p}(\Omega )$\emph{\ to some }$u_{0}\in L^{p}(\Omega ;\mathcal{B}_{\mathrm{%
AP}}^{p}(\mathbb{R}^{N}))$\emph{\ if it is weakly }$\Sigma $\emph{%
-convergent towards }$u_{0}$\emph{\ and further satisfies the following
condition: }%
\begin{equation*}
\left\Vert u_{\varepsilon }\right\Vert _{L^{p}(\Omega )}\rightarrow
\left\Vert \widehat{u}_{0}\right\Vert _{L^{p}(\Omega \times \mathcal{K})}.
\end{equation*}%
\emph{We denote this by writing }$u_{\varepsilon }\rightarrow u_{0}$\emph{\
in }$L^{p}(\Omega )$\emph{-strong }$\Sigma $\emph{.}
\end{definition}

The following results are in order; see e.g. \cite[Theorems 3.1 and 3.5]{CMP}
for the proof.

\begin{theorem}
\label{t2.2'}Let $1<p<\infty $. Let $(u_{\varepsilon })_{\varepsilon \in E}$
be a bounded sequence in $L^{p}(\Omega )$. Then there exists a subsequence $%
E^{\prime }$ from $E$ such that the sequence $(u_{\varepsilon
})_{\varepsilon \in E^{\prime }}$ is weakly $\Sigma $-convergent in $%
L^{p}(\Omega )$.
\end{theorem}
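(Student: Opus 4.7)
My plan is to prove Theorem \ref{t2.2'} by the standard duality/Riesz representation approach used for two-scale compactness results, transferred to the almost periodic setting via the Gelfand isomorphism.

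First, I would fix $f \in L^{p'}(\Omega;\mathrm{AP}(\mathbb{R}^{N}))$ and define the linear functional
\begin{equation*}
L_{\varepsilon}(f)=\int_{\Omega}u_{\varepsilon}(x)\,f\!\left(x,\tfrac{x}{\varepsilon}\right)dx.
\end{equation*}
The first step is a uniform bound: by H\"older's inequality and the elementary observation that $\|f(\cdot,\cdot/\varepsilon)\|_{L^{p'}(\Omega)} \leq \|f\|_{L^{p'}(\Omega;\mathrm{AP}(\mathbb{R}^{N}))}$ (since $\mathrm{AP}$-functions are bounded continuous with $\sup$-norm controlled by the $\mathrm{AP}$-norm), we obtain $|L_{\varepsilon}(f)| \leq C\|f\|_{L^{p'}(\Omega;\mathrm{AP}(\mathbb{R}^{N}))}$ with $C$ independent of $\varepsilon$.

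Next, since $\mathrm{AP}(\mathbb{R}^{N})$ is separable (being the closed span of the countable family of trigonometric monomials with rational frequencies) and $\Omega$ is $\sigma$-finite, the space $L^{p'}(\Omega;\mathrm{AP}(\mathbb{R}^{N}))$ is separable. A standard diagonal extraction combined with the uniform bound then produces a subsequence $E'\subset E$ such that $L_{\varepsilon}(f)$ converges to a limit $L_{0}(f)$ for every $f\in L^{p'}(\Omega;\mathrm{AP}(\mathbb{R}^{N}))$, and $L_{0}$ is linear and bounded with the same constant $C$.

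The main work is to represent $L_{0}$ as an integral against some $\widehat{u}_{0}\in L^{p}(\Omega\times \mathcal{K})$. For this I use the Gelfand isomorphism $\mathcal{G}:\mathrm{AP}(\mathbb{R}^{N})\to \mathcal{C}(\mathcal{K})$ to transfer $L^{p'}(\Omega;\mathrm{AP}(\mathbb{R}^{N}))$ into $L^{p'}(\Omega;\mathcal{C}(\mathcal{K}))$; since $\mathcal{C}(\mathcal{K})$ sits densely in $L^{p'}(\mathcal{K})$ (with Haar measure $\beta$ of total mass $1$), the image is a norm-dense subspace of $L^{p'}(\Omega\times\mathcal{K})$. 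Precisely, defining $\widetilde{L}_{0}(\widehat{f})=L_{0}(f)$ on the image and using the bound $\|f(\cdot,\cdot/\varepsilon)\|_{L^{p'}(\Omega)}\to \|\widehat{f}\|_{L^{p'}(\Omega\times\mathcal{K})}$ (which follows from property (P)$_{2}$ of the mean value, applied pointwise in $x$ and then integrated via Fubini), the functional $\widetilde{L}_{0}$ is well-defined and bounded in the $L^{p'}(\Omega\times\mathcal{K})$-norm, hence extends uniquely to $L^{p'}(\Omega\times\mathcal{K})$. By the Riesz representation theorem, there exists $\widehat{u}_{0}\in L^{p}(\Omega\times\mathcal{K})$ such that $\widetilde{L}_{0}(\widehat{f})=\iint \widehat{u}_{0}\widehat{f}\,dx\,d\beta$.

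Finally, applying the inverse of $\mathcal{G}_{1}$ pointwise in $x$, the section $s\mapsto \widehat{u}_{0}(x,s)$ defines, via measurable selection (or simply via the isometry $\mathcal{G}_{1}:\mathcal{B}_{\mathrm{AP}}^{p}(\mathbb{R}^{N})\to L^{p}(\mathcal{K})$ applied to Bochner-measurable vector-valued functions), an element $u_{0}\in L^{p}(\Omega;\mathcal{B}_{\mathrm{AP}}^{p}(\mathbb{R}^{N}))$ with $\mathcal{G}_{1}\circ u_{0}=\widehat{u}_{0}$. The defining convergence in Definition \ref{d2.1'} then holds. The main technical obstacle is justifying the norm convergence $\|f(\cdot,\cdot/\varepsilon)\|_{L^{p'}(\Omega)}\to \|\widehat{f}\|_{L^{p'}(\Omega\times\mathcal{K})}$ uniformly enough to make the density extension rigorous; this is exactly where property (P)$_{2}$ (weak-$\ast$ convergence of $u^{\varepsilon}$ to $M(u)$) together with the identity $\|u\|_{p}=[M(|u|^{p})]^{1/p}$ is used.
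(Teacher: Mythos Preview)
The paper does not prove Theorem~\ref{t2.2'} itself; it refers to \cite[Theorems~3.1 and~3.5]{CMP}. Your overall strategy---uniform bound on the functionals $L_{\varepsilon}$, extraction of a convergent subsequence, then Riesz representation on $L^{p}(\Omega\times\mathcal{K})$---is the right skeleton, and the norm-limit identity $\|f(\cdot,\cdot/\varepsilon)\|_{L^{p'}(\Omega)}\to\|\widehat{f}\|_{L^{p'}(\Omega\times\mathcal{K})}$ that you isolate at the end is indeed the crucial lemma in that reference.

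There is, however, a genuine gap in the extraction step: your claim that $\mathrm{AP}(\mathbb{R}^{N})$ is separable is false. The characters $y\mapsto\exp(ik\cdot y)$, $k\in\mathbb{R}^{N}$, form an uncountable family at pairwise sup-distance $2$, and the closed span of those with rational $k$ is only the proper closed subalgebra of almost periodic functions whose Bohr spectrum is contained in $\mathbb{Q}^{N}$; for instance $\exp(i\sqrt{2}\,y_{1})$ does not belong to it. Consequently $\mathcal{C}(\mathcal{K})$, $L^{p'}(\mathcal{K})$ and $L^{p'}(\Omega;\mathrm{AP}(\mathbb{R}^{N}))$ are all non-separable, and a diagonal extraction over a countable dense set of test functions cannot be carried out as you describe. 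This is precisely where the almost periodic setting departs from the periodic one, in which $\mathcal{C}_{\mathrm{per}}(Y)$ \emph{is} separable and the classical Nguetseng--Allaire extraction goes through verbatim. The compactness must therefore come from a different source: in the cited reference the norm-limit identity is used in a more essential way, in combination with the reflexivity of $L^{p}(\Omega\times\mathcal{K})$ for $1<p<\infty$, so that weak sequential compactness is obtained without any separability hypothesis. You should consult \cite{CMP} (or \cite{Hom1}) for the precise mechanism and rework your extraction paragraph accordingly.
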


\begin{theorem}
\label{t2.3'}Let $1<p<\infty $. Let $(u_{\varepsilon })_{\varepsilon \in E}$
be a bounded sequence in $W^{1,p}(\Omega )$. Then there exist a subsequence $%
E^{\prime }$ of $E$ and a couple of functions 
\begin{equation*}
(u_{0},u_{1})\in W^{1,p}(\Omega )\times L^{p}(\Omega ;\mathcal{B}_{\#\mathrm{%
AP}}^{1,p}(\mathbb{R}^{N}))
\end{equation*}%
such that, as $E^{\prime }\ni \varepsilon \rightarrow 0$, 
\begin{equation*}
u_{\varepsilon }\rightarrow u_{0}\ \text{in }W^{1,p}(\Omega )\text{-weak;}
\end{equation*}%
\begin{equation*}
\frac{\partial u_{\varepsilon }}{\partial x_{i}}\rightarrow \frac{\partial
u_{0}}{\partial x_{i}}+\frac{\overline{\partial }u_{1}}{\partial y_{i}}\text{%
\ in }L^{p}(\Omega )\text{-weak }\Sigma \text{, }1\leq i\leq N.
\end{equation*}
\end{theorem}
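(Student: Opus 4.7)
The plan is a standard two-scale identification argument adapted to the almost periodic setting: weak compactness together with Theorem \ref{t2.2'} produce $u_0$ and a $\Sigma$-weak limit $\mathbf{v}$ of $\nabla u_\varepsilon$; an orthogonality argument against classically divergence-free almost periodic test fields, combined with the closed-range structure of $\widetilde{\nabla}_y$ from property (P$_3$), then identifies the oscillatory part $\mathbf{v} - \nabla u_0$ as a $y$-gradient $\overline{\nabla}_y u_1$.

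First, by reflexivity of $W^{1,p}(\Omega)$, I extract a subsequence along which $u_\varepsilon \rightharpoonup u_0$ in $W^{1,p}(\Omega)$-weak; by local Rellich compactness, $u_\varepsilon \to u_0$ strongly in $L^p_{\mathrm{loc}}(\Omega)$. Since $(\partial u_\varepsilon/\partial x_i)_\varepsilon$ is bounded in $L^p(\Omega)$ for each $i$, an $N$-fold application of Theorem \ref{t2.2'} with a diagonal extraction yields a further subsequence $E'$ and functions $v_i \in L^p(\Omega; \mathcal{B}_{\mathrm{AP}}^p(\mathbb{R}^N))$ with $\partial u_\varepsilon/\partial x_i \to v_i$ in $L^p(\Omega)$-weak $\Sigma$. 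Choosing test functions $\varphi \in \mathcal{D}(\Omega)$ independent of $y$ in Definition \ref{d2.1'} and comparing with the ordinary weak $L^p$-limit of $\partial u_\varepsilon/\partial x_i$ gives $M_y(v_i)(x) = \partial u_0(x)/\partial x_i$ for a.e.\ $x$.

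Next, set $\mathbf{w} = \mathbf{v} - \nabla u_0$. Fix any $\varphi \in \mathcal{D}(\Omega)$ and any $\mathbf{g} \in \mathrm{AP}^\infty(\mathbb{R}^N)^N$ with classical divergence $\Div \mathbf{g} = 0$ in $\mathbb{R}^N$, and use the test field $\boldsymbol{\Phi}^\varepsilon(x) = \varphi(x)\mathbf{g}(x/\varepsilon)$. On one hand, weak $\Sigma$-convergence gives $\int_\Omega \nabla u_\varepsilon \cdot \boldsymbol{\Phi}^\varepsilon\, dx \to \int_\Omega \varphi(x) \int_{\mathcal{K}} \widehat{\mathbf{v}}(x, s) \cdot \widehat{\mathbf{g}}(s)\, ds\, dx$; on the other, integration by parts in $x$ and the vanishing of the $\varepsilon^{-1}$-term (forced by $\Div_y \mathbf{g} = 0$) rewrite the same integral as $-\int_\Omega u_\varepsilon \nabla\varphi \cdot \mathbf{g}(\cdot/\varepsilon)\, dx$, whose limit, by local strong $L^p$-convergence of $u_\varepsilon$ and the weak-$*$ convergence $\mathbf{g}(\cdot/\varepsilon) \rightharpoonup M_y(\mathbf{g})$ in $L^\infty$, is $\int_\Omega \varphi\, \nabla u_0 \cdot M_y(\mathbf{g})\, dx$. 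Equating and using the arbitrariness of $\varphi$ yields, for a.e.\ $x$, the orthogonality $\int_{\mathcal{K}} \widehat{\mathbf{w}}(x, \cdot) \cdot \widehat{\mathbf{g}}\, ds = 0$ for every such $\mathbf{g}$.

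To recover $u_1$, the orthogonality should be extended by density to all $\mathbf{g} \in (\mathcal{B}_{\mathrm{AP}}^{p'}(\mathbb{R}^N))^N$ with $\overline{\Div}_y \mathbf{g} = 0$, i.e., to the full annihilator of $\widetilde{\nabla}_y(\mathcal{B}_{\#\mathrm{AP}}^{1,p})$ as identified by (P$_3$). Since $\widetilde{\nabla}_y$ has closed range (P$_3$), the bipolar theorem in the reflexive space $(\mathcal{B}_{\mathrm{AP}}^p(\mathbb{R}^N))^N$ yields, for a.e.\ $x$, a unique $u_1(x) \in \mathcal{B}_{\#\mathrm{AP}}^{1,p}(\mathbb{R}^N)$ with $\overline{\nabla}_y u_1(x) = \mathbf{w}(x, \cdot)$. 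The isometry from (P$_1$) gives $\|u_1(x)\|_{\#, p} = \|\mathbf{w}(x, \cdot)\|_p \in L^p(\Omega)$, and strong measurability of $x \mapsto u_1(x)$ follows by Pettis's theorem from the weak measurability of $\mathbf{w}$. I expect the main obstacle to be precisely this density of classically divergence-free $\mathrm{AP}^\infty$ fields among the AP-divergence-free elements of $(\mathcal{B}_{\mathrm{AP}}^{p'})^N$---the Besicovitch analogue of the standard Helmholtz-type density result---without which the bipolar theorem cannot be invoked to pass from orthogonality against a dense cone to membership in the range of $\widetilde{\nabla}_y$.
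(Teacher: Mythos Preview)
The paper does not supply its own proof of this theorem: immediately before Theorems~\ref{t2.2'} and~\ref{t2.3'} it refers the reader to \cite[Theorems~3.1 and~3.5]{CMP}. Your argument is precisely the standard one carried out in that reference (and in the periodic literature going back to Allaire and Nguetseng): extract $u_0$ and the $\Sigma$-limits $v_i$ of the partials, then identify the oscillatory remainder $\mathbf{w}=\mathbf{v}-\nabla u_0$ as a $y$-gradient via an orthogonality/closed-range argument.

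Two small comments. First, the obstacle you single out---density of classically divergence-free fields in $\mathrm{AP}^\infty(\mathbb{R}^N)^N$ inside $\ker\overline{\Div}_y\subset(\mathcal{B}_{\mathrm{AP}}^{p'})^N$---is genuine, and in the almost periodic framework it is typically handled not by proving that density directly but by rephrasing the orthogonality as a ``curl-free'' condition: testing $\partial_{x_i}u_\varepsilon$ against $\varphi(x)(\partial_{y_j}\psi)^\varepsilon$ and $\partial_{x_j}u_\varepsilon$ against $\varphi(x)(\partial_{y_i}\psi)^\varepsilon$ with $\psi\in\mathrm{AP}^\infty$ makes the dangerous $\varepsilon^{-1}$ terms cancel by equality of mixed partials, and in the limit yields $M_y\bigl(w_i\,\overline{\partial}_{y_j}\psi - w_j\,\overline{\partial}_{y_i}\psi\bigr)=0$ for all $\psi$. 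One then invokes the characterisation of the range of $\widetilde{\nabla}_y$ (this is exactly the content of property~(P$_3$) together with the structure results proved in \cite{CMP,NA}) to conclude $\mathbf{w}(x,\cdot)\in\widetilde{\nabla}_y\bigl(\mathcal{B}_{\#\mathrm{AP}}^{1,p}\bigr)$. Second, your measurability argument for $x\mapsto u_1(x)$ via Pettis is correct but slightly indirect; since $\widetilde{\nabla}_y$ is an isometric embedding onto a closed subspace, its inverse on the range is continuous, and composing it with the (strongly measurable) map $x\mapsto\mathbf{w}(x,\cdot)$ gives strong measurability of $u_1$ immediately.
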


The next result deals with the $\Sigma $-convergence of a product of
sequences.

\begin{theorem}[{\protect\cite[Theorem 6]{DPDE}}]
\label{t2.4'}Let $1<p,q<\infty $ and $r\geq 1$ be such that $1/r=1/p+1/q\leq
1$. Assume $(u_{\varepsilon })_{\varepsilon \in E}\subset L^{q}(\Omega )$ is
weakly $\Sigma $-convergent in $L^{q}(\Omega )$ to some $u_{0}\in
L^{q}(\Omega ;\mathcal{B}_{\mathrm{AP}}^{q}(\mathbb{R}^{N}))$, and $%
(v_{\varepsilon })_{\varepsilon \in E}\subset L^{p}(\Omega )$ is strongly $%
\Sigma $-convergent in $L^{p}(\Omega )$ to some $v_{0}\in L^{p}(\Omega ;%
\mathcal{B}_{\mathrm{AP}}^{p}(\mathbb{R}^{N}))$. Then the sequence $%
(u_{\varepsilon }v_{\varepsilon })_{\varepsilon \in E}$ is weakly $\Sigma $%
-convergent in $L^{r}(\Omega )$ to $u_{0}v_{0}$.
\end{theorem}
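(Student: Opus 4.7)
The plan is the standard density/approximation argument for two-scale products, adapted to the $\Sigma$-convergence framework. First, Hölder's inequality bounds $(u_{\varepsilon}v_{\varepsilon})$ in $L^{r}(\Omega)$, so that by Theorem~\ref{t2.2'} a subsequence is weakly $\Sigma$-convergent in $L^{r}(\Omega)$; it then suffices to identify the limit against a dense class of test functions $\varphi \in L^{r'}(\Omega;\mathrm{AP}(\mathbb{R}^{N}))$ (with $1/r+1/r'=1$). Concretely the goal is to verify
\begin{equation*}
\int_{\Omega} u_{\varepsilon}v_{\varepsilon}\varphi^{\varepsilon}\,dx \longrightarrow \iint_{\Omega\times\mathcal{K}} \widehat{u}_{0}\,\widehat{v}_{0}\,\widehat{\varphi}\,dx\,ds,
\end{equation*}
where $\varphi^{\varepsilon}(x)=\varphi(x,x/\varepsilon)$.

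Using density of $L^{p}(\Omega;\mathrm{AP}(\mathbb{R}^{N}))$ in $L^{p}(\Omega;\mathcal{B}_{\mathrm{AP}}^{p}(\mathbb{R}^{N}))$, for each $\delta>0$ I would pick $w^{\delta}\in L^{p}(\Omega;\mathrm{AP}(\mathbb{R}^{N}))$ with $\|\widehat{v}_{0}-\widehat{w^{\delta}}\|_{L^{p}(\Omega\times\mathcal{K})}<\delta$, set $w^{\delta,\varepsilon}(x)=w^{\delta}(x,x/\varepsilon)$, and split
\begin{equation*}
\int_{\Omega} u_{\varepsilon}v_{\varepsilon}\varphi^{\varepsilon}\,dx = \int_{\Omega} u_{\varepsilon}(w^{\delta}\varphi)^{\varepsilon}\,dx + \int_{\Omega} u_{\varepsilon}\bigl(v_{\varepsilon}-w^{\delta,\varepsilon}\bigr)\varphi^{\varepsilon}\,dx.
\end{equation*}
The exponent relation $1/q'=1/p+1/r'$ (which follows from $1/r=1/p+1/q$) together with the fact that $\mathrm{AP}(\mathbb{R}^{N})$ is a Banach algebra ensure that $w^{\delta}\varphi \in L^{q'}(\Omega;\mathrm{AP}(\mathbb{R}^{N}))$. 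Hence the first summand is an admissible pairing for the weak $\Sigma$-convergence of $u_{\varepsilon}$ in $L^{q}(\Omega)$ and converges to $\iint \widehat{u}_{0}\widehat{w^{\delta}}\widehat{\varphi}\,dx\,ds$, which in turn tends to $\iint \widehat{u}_{0}\widehat{v}_{0}\widehat{\varphi}\,dx\,ds$ as $\delta\to 0$.

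The second summand is estimated by Hölder,
\begin{equation*}
\left|\int_{\Omega} u_{\varepsilon}\bigl(v_{\varepsilon}-w^{\delta,\varepsilon}\bigr)\varphi^{\varepsilon}\,dx\right| \le \|u_{\varepsilon}\|_{L^{q}}\,\|v_{\varepsilon}-w^{\delta,\varepsilon}\|_{L^{p}(\Omega)}\,\|\varphi^{\varepsilon}\|_{L^{r'}(\Omega)},
\end{equation*}
where the first factor is uniformly bounded and the third converges to $\|\widehat{\varphi}\|_{L^{r'}(\Omega\times\mathcal{K})}$ by the mean-value property. The main obstacle is therefore the approximation inequality $\limsup_{\varepsilon\to 0}\|v_{\varepsilon}-w^{\delta,\varepsilon}\|_{L^{p}(\Omega)}\le C\delta$. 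In the case $p=2$ it is immediate: expand the square and combine the norm convergence $\|v_{\varepsilon}\|_{L^{2}}\to\|\widehat{v}_{0}\|_{L^{2}(\Omega\times\mathcal{K})}$ built into the strong $\Sigma$-convergence of $v_{\varepsilon}$ with the weak $\Sigma$-limit of $v_{\varepsilon}$ tested against $w^{\delta}$ and the mean-value limit $\|w^{\delta,\varepsilon}\|_{L^{2}}\to\|\widehat{w^{\delta}}\|_{L^{2}(\Omega\times\mathcal{K})}$. For general $1<p<\infty$ I would invoke uniform convexity of $L^{p}$ and Clarkson-type inequalities, or first prove the estimate for smoother approximants $w^{\delta}\in L^{p}(\Omega;\mathcal{D}_{\mathrm{AP}}(\mathbb{R}^{N}))$ by expansion and extend by density. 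Once this approximation lemma is in hand, a $\delta$-then-$\varepsilon$ diagonal limit completes the proof.
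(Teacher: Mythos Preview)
The paper does not actually prove this theorem; it is merely quoted from \cite[Theorem~6]{DPDE} and used as a tool, so there is no in-paper argument to compare against.

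Your outline is the standard density argument and is essentially correct. Two comments. First, the preliminary subsequence extraction via Theorem~\ref{t2.2'} is superfluous (and would not apply when $r=1$): your splitting already establishes the convergence for every admissible test function, hence for the full sequence. Second, the ``main obstacle'' you isolate is indeed the crux, and your suggested Clarkson route works. One clean way to organise it for $p\ge 2$: from weak $\Sigma$-convergence one has the lower semicontinuity $\liminf_{\varepsilon}\|v_{\varepsilon}+w^{\delta,\varepsilon}\|_{L^{p}}\ge \|\widehat{v}_{0}+\widehat{w^{\delta}}\|_{L^{p}(\Omega\times\mathcal{K})}$; combining this with Clarkson's first inequality and the two norm limits $\|v_{\varepsilon}\|_{p}\to\|\widehat{v}_{0}\|_{p}$, $\|w^{\delta,\varepsilon}\|_{p}\to\|\widehat{w^{\delta}}\|_{p}$ yields
\[
\limsup_{\varepsilon}\|v_{\varepsilon}-w^{\delta,\varepsilon}\|_{L^{p}}^{p}\le 2^{p-1}\bigl(\|\widehat{v}_{0}\|_{p}^{p}+\|\widehat{w^{\delta}}\|_{p}^{p}\bigr)-\|\widehat{v}_{0}+\widehat{w^{\delta}}\|_{p}^{p},
\]
and the right-hand side tends to $0$ as $\delta\to 0$. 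The case $1<p<2$ is handled symmetrically with Clarkson's second inequality. With this lemma in hand your $\delta$-then-$\varepsilon$ argument goes through verbatim.
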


As a consequence of the above theorem the following holds.

\begin{corollary}
\label{c2.1'}Let $(u_{\varepsilon })_{\varepsilon \in E}\subset L^{p}(\Omega
)$ and $(v_{\varepsilon })_{\varepsilon \in E}\subset L^{p^{\prime }}(\Omega
)\cap L^{\infty }(\Omega )$ ($1<p<\infty $ and $p^{\prime }=p/(p-1)$) be two
sequences such that:

\begin{itemize}
\item[(i)] $u_{\varepsilon }\rightarrow u_{0}$ in $L^{p}(\Omega )$-weak $%
\Sigma $;

\item[(ii)] $v_{\varepsilon }\rightarrow v_{0}$ in $L^{p^{\prime }}(\Omega )$%
-strong $\Sigma $;

\item[(iii)] $(v_{\varepsilon })_{\varepsilon \in E}$ is bounded in $%
L^{\infty }(\Omega )$.
\end{itemize}

\noindent Then $u_{\varepsilon }v_{\varepsilon }\rightarrow u_{0}v_{0}$ in $%
L^{p}(\Omega )$-weak $\Sigma $.
\end{corollary}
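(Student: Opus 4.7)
The plan is to combine Theorem \ref{t2.4'} with the $L^{\infty}$-boundedness assumption on $(v_{\varepsilon})$ to upgrade the natural $L^{1}$-weak $\Sigma$-convergence of the product to $L^{p}$-weak $\Sigma$-convergence. First I would observe that $(u_{\varepsilon}v_{\varepsilon})_{\varepsilon \in E}$ is bounded in $L^{p}(\Omega)$. Indeed, since $u_{\varepsilon} \to u_{0}$ in $L^{p}(\Omega)$-weak $\Sigma$, the sequence $(u_{\varepsilon})$ is bounded in $L^{p}(\Omega)$, and then $\|u_{\varepsilon}v_{\varepsilon}\|_{L^{p}(\Omega)} \leq \|v_{\varepsilon}\|_{L^{\infty}(\Omega)} \|u_{\varepsilon}\|_{L^{p}(\Omega)}$ gives the uniform bound by hypothesis (iii). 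Applying Theorem \ref{t2.2'}, I can extract a subsequence $E' \subset E$ and some $w_{0} \in L^{p}(\Omega;\mathcal{B}_{\mathrm{AP}}^{p}(\mathbb{R}^{N}))$ with
\begin{equation*}
u_{\varepsilon}v_{\varepsilon} \to w_{0}\ \text{in}\ L^{p}(\Omega)\text{-weak}\ \Sigma \quad \text{as}\ E' \ni \varepsilon \to 0.
\end{equation*}

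Next, I would identify $w_{0}$ with $u_{0}v_{0}$. Applying Theorem \ref{t2.4'} with the exponents $q = p$ and $p' = p/(p-1)$ (which gives $1/r = 1/p + 1/p' = 1$), hypotheses (i) and (ii) imply
\begin{equation*}
u_{\varepsilon}v_{\varepsilon} \to u_{0}v_{0}\ \text{in}\ L^{1}(\Omega)\text{-weak}\ \Sigma \quad \text{as}\ E \ni \varepsilon \to 0.
\end{equation*}
Now I test both limits against the same class of test functions. For $\varphi \in \mathcal{D}(\Omega) \otimes \mathrm{AP}(\mathbb{R}^{N})$ one has $\varphi \in L^{\infty}(\Omega;\mathrm{AP}(\mathbb{R}^{N})) \cap L^{p'}(\Omega;\mathrm{AP}(\mathbb{R}^{N}))$, so both convergences apply and yield
\begin{equation*}
\iint_{\Omega \times \mathcal{K}} \widehat{w}_{0}(x,s)\widehat{\varphi}(x,s)\,dx\,ds = \iint_{\Omega \times \mathcal{K}} \widehat{u}_{0}(x,s)\widehat{v}_{0}(x,s)\widehat{\varphi}(x,s)\,dx\,ds.
\end{equation*}
Since $\mathcal{D}(\Omega) \otimes \mathrm{AP}(\mathbb{R}^{N})$ is dense in $L^{p'}(\Omega;\mathrm{AP}(\mathbb{R}^{N}))$, and $\widehat{w}_{0} - \widehat{u}_{0}\widehat{v}_{0}$ belongs to $L^{p}(\Omega \times \mathcal{K})$ (the product $\widehat{u}_{0}\widehat{v}_{0}$ is in $L^{p}$ because $\widehat{v}_{0} \in L^{\infty}(\Omega \times \mathcal{K})$ by the weak-$*$ limit in $L^{\infty}$ of $(v_{\varepsilon})$), this forces $w_{0} = u_{0}v_{0}$.

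Finally, I would invoke the standard subsequence principle: every subsequence of $(u_{\varepsilon}v_{\varepsilon})_{\varepsilon \in E}$ admits, via Theorem \ref{t2.2'}, a further subsequence that is weakly $\Sigma$-convergent in $L^{p}(\Omega)$, and the argument above identifies the limit uniquely as $u_{0}v_{0}$. Hence the whole sequence $(u_{\varepsilon}v_{\varepsilon})_{\varepsilon \in E}$ converges weakly $\Sigma$ in $L^{p}(\Omega)$ to $u_{0}v_{0}$. The main obstacle is the compatibility of the two test-function classes: $L^{p}(\Omega)$-weak $\Sigma$ requires test functions in $L^{p'}(\Omega;\mathrm{AP}(\mathbb{R}^{N}))$, while the $L^{1}$-weak $\Sigma$ statement from Theorem \ref{t2.4'} only delivers information against $L^{\infty}(\Omega;\mathrm{AP}(\mathbb{R}^{N}))$ test functions. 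The density of $\mathcal{D}(\Omega) \otimes \mathrm{AP}(\mathbb{R}^{N})$ in $L^{p'}(\Omega;\mathrm{AP}(\mathbb{R}^{N}))$ and the $L^{\infty}$ control on $\widehat{v}_{0}$ are precisely what make the two levels of integrability communicate.
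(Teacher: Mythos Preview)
Your argument is correct and is precisely the natural way to flesh out the one-line justification the paper gives (``As a consequence of the above theorem the following holds''): bound the product in $L^{p}$ via (iii), extract a weak $\Sigma$-limit by Theorem~\ref{t2.2'}, identify it with $u_{0}v_{0}$ through the $L^{1}$-weak $\Sigma$ convergence supplied by Theorem~\ref{t2.4'}, and conclude by the subsequence principle. The only point worth tightening is the claim $\widehat{v}_{0}\in L^{\infty}(\Omega\times\mathcal{K})$: rather than invoking a weak-$\ast$ limit in $L^{\infty}$ (which is not quite the right object here), you can simply observe that once the identity $\int\widehat{w}_{0}\widehat{\varphi}=\int\widehat{u}_{0}\widehat{v}_{0}\widehat{\varphi}$ is known for all $\varphi\in\mathcal{C}_{0}^{\infty}(\Omega)\otimes\mathrm{AP}(\mathbb{R}^{N})$, both sides define the same element of $L^{1}_{\mathrm{loc}}(\Omega\times\mathcal{K})$, hence $\widehat{w}_{0}=\widehat{u}_{0}\widehat{v}_{0}$ a.e.; the $L^{p}$ membership of the product then follows \emph{a posteriori} from that of $\widehat{w}_{0}$, and no separate $L^{\infty}$ bound on $\widehat{v}_{0}$ is needed.
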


We are now able to deal with $\Sigma $-convergence of a convolution product
of sequences. Before we can do that, let us first and foremost define the
convolution of functions defined on $\mathcal{K}$. Let $p,q,m\geq 1$ be real
numbers satisfying $\frac{1}{p}+\frac{1}{q}=1+\frac{1}{m}$. For $u\in L^{p}(%
\mathcal{K})$ and $v\in L^{q}(\mathcal{K})$ we define the convolution
product $u\widehat{\ast }v$ as follows: 
\begin{equation*}
(u\widehat{\ast }v)(s)=\int_{\mathcal{K}}u(r)v(s-r)dr\text{, \ a.e. }s\in 
\mathcal{K}.
\end{equation*}%
Then $\widehat{\ast }$ is well-defined and we have that $u\widehat{\ast }%
v\in L^{m}(\mathcal{K})$ with the following Young inequality: 
\begin{equation*}
\left\Vert u\widehat{\ast }v\right\Vert _{L^{m}(\mathcal{K})}\leq \left\Vert
u\right\Vert _{L^{p}(\mathcal{K})}\left\Vert v\right\Vert _{L^{q}(\mathcal{K}%
)}.
\end{equation*}%
Now let $u\in L^{p}(\mathbb{R}^{N};L^{p}(\mathcal{K}))$ and $v\in L^{q}(%
\mathbb{R}^{N};L^{q}(\mathcal{K}))$. We define the \emph{two-scale
convolution} $u\ast \ast v$ as follows: 
\begin{eqnarray*}
(u\ast \ast v)(x,s) &=&\int_{\mathbb{R}^{N}}\left[ \left( u(t,\cdot )%
\widehat{\ast }v(x-t,\cdot )\right) (s)\right] dt \\
&\equiv &\int_{\mathbb{R}^{N}}\int_{\mathcal{K}}u(t,r)\ast v(x-t,s-r)drdt%
\text{, a.e. }(x,s)\in \mathbb{R}^{N}\times \mathcal{K}.
\end{eqnarray*}%
Then $\ast \ast $ is well defined as an element of $L^{m}(\mathbb{R}%
^{N}\times \mathcal{K})$ and satisfies 
\begin{equation*}
\left\Vert u\ast \ast v\right\Vert _{L^{m}(\mathbb{R}^{N}\times \mathcal{K}%
)}\leq \left\Vert u\right\Vert _{L^{p}(\mathbb{R}^{N}\times \mathcal{K}%
)}\left\Vert v\right\Vert _{L^{q}(\mathbb{R}^{N}\times \mathcal{K})}.
\end{equation*}%
It is to be noted that if $u\in L^{p}(\Omega ;L^{p}(\mathcal{K}))$ where $%
\Omega $ is an open subset of $\mathbb{R}^{N}$, and $v\in L^{q}(\mathbb{R}%
^{N};L^{q}(\mathcal{K}))$, we may still define $u\ast \ast v$ by viewing $u$
as defined in the whole of $\mathbb{R}^{N}\times \mathcal{K}$; it suffices
to take the extension by zero of $u$ outside $\Omega \times \mathcal{K}$.

Finally, for $u\in L^{p}(\mathbb{R}^{N};\mathcal{B}_{\mathrm{AP}}^{p}(%
\mathbb{R}^{N}))$ and $v\in L^{q}(\mathbb{R}^{N};\mathcal{B}_{\mathrm{AP}%
}^{q}(\mathbb{R}^{N}))$ we define the two-scale convolution denoted by $\ast
\ast $ as follows: $u\ast \ast v$ is that element of $L^{m}(\mathbb{R}^{N};%
\mathcal{B}_{\mathrm{AP}}^{m}(\mathbb{R}^{N}))$ defined by 
\begin{equation*}
\mathcal{G}_{1}(u\ast \ast v)=\widehat{u}\ast \ast \widehat{v}.
\end{equation*}%
It also satisfies Young inequality.

Now, let $t\in \mathbb{R}^{N}$ be fixed. Then $(\delta _{t/\varepsilon
})_{\varepsilon >0}$ is a net in the compact topological group $\mathcal{K}$%
, hence it possesses a weak$\ast $ cluster point $r\in \mathcal{K}$. In the
sequel we shall consider a subnet still denoted by $(\delta _{t/\varepsilon
})_{\varepsilon >0}$ (if there is no danger of confusion) that converges to $%
r$ in the topology of $\mathcal{K}$, i.e. 
\begin{equation}
\delta _{\frac{t}{\varepsilon }}\rightarrow r\text{ in }\mathcal{K}\text{%
-weak}\ast \text{ as }\varepsilon \rightarrow 0\text{.}  \label{2.3'}
\end{equation}%
Let $(u_{\varepsilon })_{\varepsilon >0}$ be a sequence in $L^{p}(\Omega )$ (%
$1\leq p<\infty $) which is weakly $\Sigma $-convergent to $u_{0}\in
L^{p}(\Omega ;\mathcal{B}_{\mathrm{AP}}^{p}(\mathbb{R}^{N}))$. We will see
that a macro-translation of $u_{\varepsilon }$ induces both micro- and
macro-translations on the limit $u_{0}$. This is the main goal of the
following result whose proof can be found in \cite{SW}.

\begin{theorem}[{\protect\cite[Proposition 3]{SW}}]
\label{t2.5'}Let $(u_{\varepsilon })_{\varepsilon >0}$ be a sequence in $%
L^{p}(\Omega )$ \emph{(}$1\leq p<\infty $\emph{)} and let the sequence $%
(v_{\varepsilon })_{\varepsilon >0}$ be defined by 
\begin{equation*}
v_{\varepsilon }(x)=u_{\varepsilon }(x+t)\ \ \ (x\in \Omega -t)\text{\ }.
\end{equation*}%
Finally, let $u_{0}\in L^{p}(\Omega ;\mathcal{B}_{\mathrm{AP}}^{p}(\mathbb{R}%
^{N}))$ and assume that $u_{\varepsilon }\rightarrow u_{0}$ in $L^{p}(\Omega
)$-weak $\Sigma $. If \emph{(\ref{2.3'})} holds true then 
\begin{equation}
v_{\varepsilon }\rightarrow v_{0}\text{ in }L^{p}(\Omega -t)\text{-weak }%
\Sigma  \label{2.4'}
\end{equation}%
where $v_{0}\in L^{p}(\Omega -t,\mathcal{B}_{\mathrm{AP}}^{p}(\mathbb{R}%
^{N}))$ is defined by $\widehat{v}_{0}(x,s)=\widehat{u}_{0}(x+t,s+r)$ for $%
(x,s)\in (\Omega -t)\times \mathcal{K}$.
\end{theorem}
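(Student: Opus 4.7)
The plan is to unfold the defining duality of weak $\Sigma$-convergence on $\Omega - t$ and reduce it, by the change of variable $y = x + t$, to a statement on $\Omega$ with an $\varepsilon$-dependent test function. Specifically, setting $\phi(y, z) = f(y - t, z) \in L^{p'}(\Omega; \mathrm{AP}(\mathbb{R}^N))$ for an arbitrary $f \in L^{p'}(\Omega - t; \mathrm{AP}(\mathbb{R}^N))$, proving $v_\varepsilon \to v_0$ in the stated sense is equivalent to showing that
\begin{equation*}
\int_\Omega u_\varepsilon(y)\, \phi\!\left(y,\ \tfrac{y}{\varepsilon} - \tfrac{t}{\varepsilon}\right) dy \longrightarrow \int_{\Omega \times \mathcal{K}} \widehat{u}_0(y, s)\, \widehat{\phi}(y, s - r)\, dy\, ds.
\end{equation*}
Reversing the change of variable in $y$ and translating $s$ by $r$ (using Haar invariance of $\beta$ on $\mathcal{K}$) then produces the announced identification $\widehat{v}_0(x, s) = \widehat{u}_0(x + t, s + r)$.

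Next, I would exploit the density in $L^{p'}(\Omega; \mathrm{AP}(\mathbb{R}^N))$ of finite linear combinations of tensors $\psi \otimes b$ with $\psi \in L^{p'}(\Omega)$ and $b \in \mathrm{AP}(\mathbb{R}^N)$. Since the boundedness of $(u_\varepsilon)$ in $L^p(\Omega)$ (automatic from weak $\Sigma$-convergence) makes the left-hand side above continuous in $\phi$ uniformly in $\varepsilon$, a $3\epsilon$ argument reduces the matter to a single tensor $\phi = \psi \otimes b$. For such a tensor, setting $b_\varepsilon(z) := b(z - t/\varepsilon)$ gives $b_\varepsilon \in \mathrm{AP}(\mathbb{R}^N)$, and the transplanting relation \eqref{2.1'} yields $\widehat{b}_\varepsilon(s) = \widehat{b}(s - \delta_{t/\varepsilon})$ on $\mathcal{K}$.

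The crux is then to promote the weak-$\ast$ hypothesis $\delta_{t/\varepsilon} \to r$ to a uniform convergence at the function level. Because $\widehat{b} \in \mathcal{C}(\mathcal{K})$ is automatically uniformly continuous on the compact group $\mathcal{K}$, and because the group operation is continuous, the hypothesis $\delta_{t/\varepsilon} \to r$ forces $r - \delta_{t/\varepsilon} \to 0_{\mathcal{K}}$ and hence $\sup_{s \in \mathcal{K}} |\widehat{b}(s - \delta_{t/\varepsilon}) - \widehat{b}(s - r)| \to 0$. Using surjectivity of $\mathcal{G}: \mathrm{AP}(\mathbb{R}^N) \to \mathcal{C}(\mathcal{K})$, I pick $w \in \mathrm{AP}(\mathbb{R}^N)$ with $\widehat{w} = \widehat{b}(\cdot - r)$; the isometry property of $\mathcal{G}$ then delivers $\|b_\varepsilon - w\|_{L^\infty(\mathbb{R}^N)} \to 0$. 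I expect this transfer from the abstract convergence on $\mathcal{K}$ to a norm convergence of almost periodic functions to be the main obstacle, since it is precisely where the topological group structure of $\mathcal{K}$ is used essentially.

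To conclude, I would decompose
\begin{equation*}
\int_\Omega u_\varepsilon(y)\, \psi(y)\, b_\varepsilon(y/\varepsilon)\, dy = \int_\Omega u_\varepsilon \psi\, w(y/\varepsilon)\, dy + \int_\Omega u_\varepsilon \psi\, \bigl( b_\varepsilon(y/\varepsilon) - w(y/\varepsilon) \bigr) dy.
\end{equation*}
The remainder is bounded by $\|u_\varepsilon\|_{L^p} \|\psi\|_{L^{p'}} \|b_\varepsilon - w\|_\infty$ and thus vanishes, while the leading term converges, by applying the given $u_\varepsilon \to u_0$ in $L^p(\Omega)$-weak $\Sigma$ to the \emph{fixed} test function $(y, z) \mapsto \psi(y)\, w(z)$, to $\int_{\Omega \times \mathcal{K}} \widehat{u}_0(y, s) \psi(y) \widehat{b}(s - r)\, dy\, ds$, which is exactly $\int \widehat{u}_0(y, s)\, \widehat{\phi}(y, s - r)\, dy\, ds$. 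Reassembling via the density step and undoing the initial change of variable completes the plan.
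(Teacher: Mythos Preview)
The paper does not supply its own proof of this theorem; it explicitly refers the reader to \cite[Proposition~3]{SW}. Your argument is correct and is in fact the natural (and standard) route: reduce by density to tensor test functions $\psi\otimes b$, use the transplant relation~\eqref{2.1'} to identify $\widehat{b_\varepsilon}=\widehat{b}(\cdot-\delta_{t/\varepsilon})$, upgrade the weak-$\ast$ convergence $\delta_{t/\varepsilon}\to r$ on the compact group $\mathcal{K}$ to uniform convergence of translates $\widehat{b}(\cdot-\delta_{t/\varepsilon})\to\widehat{b}(\cdot-r)$ via uniform continuity, pull back through the isometric isomorphism $\mathcal{G}$, and then apply the hypothesis $u_\varepsilon\to u_0$ to the fixed test function $\psi\otimes w$. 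This is essentially the argument of \cite{SW}.

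One minor caveat: your parenthetical that boundedness of $(u_\varepsilon)$ in $L^p(\Omega)$ is ``automatic from weak $\Sigma$-convergence'' is safe for $1<p<\infty$ (weak $\Sigma$-convergence implies weak $L^p$ convergence, hence boundedness by Banach--Steinhaus), but for $p=1$ one should simply assume it, as is customary in this framework.
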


We have now in hands all the ingredients to prove the convergence result
involving convolution product of sequences.

Let $p,q,m\geq 1$ be real numbers such that $\frac{1}{p}+\frac{1}{q}=1+\frac{%
1}{m}$. Let $(u_{\varepsilon })_{\varepsilon >0}\subset L^{p}(\Omega )$ and $%
(v_{\varepsilon })_{\varepsilon >0}\subset L^{q}(\mathbb{R}^{N})$ be two
sequences. One may view $u_{\varepsilon }$ as defined in the whole $\mathbb{R%
}^{N}$ by taking its extension by zero outside $\Omega $. Define 
\begin{equation*}
(u_{\varepsilon }\ast v_{\varepsilon })(x)=\int_{\mathbb{R}%
^{N}}u_{\varepsilon }(t)v_{\varepsilon }(x-t)dt\ \ (x\in \mathbb{R}^{N}),
\end{equation*}%
which lies in $L^{m}(\mathbb{R}^{N})$ and satisfies the Young's inequality 
\begin{equation}
\left\Vert u_{\varepsilon }\ast v_{\varepsilon }\right\Vert _{L^{m}(\Omega
)}\leq \left\Vert u_{\varepsilon }\right\Vert _{L^{p}(\Omega )}\left\Vert
v_{\varepsilon }\right\Vert _{L^{q}(\Omega )}.  \label{2.6'}
\end{equation}%
Here is that result.

\begin{theorem}[{\protect\cite[Theorem 2.6]{M2AS}}]
\label{t2.6'}Let $(u_{\varepsilon })_{\varepsilon >0}$ and $(v_{\varepsilon
})_{\varepsilon >0}$ be as above. Assume that, as $\varepsilon \rightarrow 0$%
, $u_{\varepsilon }\rightarrow u_{0}$ in $L^{p}(\Omega )$-weak $\Sigma $ and 
$v_{\varepsilon }\rightarrow v_{0}$ in $L^{q}(\mathbb{R}^{N})$-strong $%
\Sigma $, where $u_{0}$ and $v_{0}$ are in $L^{p}(\Omega ;\mathcal{B}_{%
\mathrm{AP}}^{p}(\mathbb{R}^{N}))$ and $L^{q}(\mathbb{R}^{N};\mathcal{B}_{%
\mathrm{AP}}^{q}(\mathbb{R}^{N}))$ respectively. Then, as $\varepsilon
\rightarrow 0$, 
\begin{equation*}
u_{\varepsilon }\ast v_{\varepsilon }\rightarrow u_{0}\ast \ast v_{0}\text{
in }L^{m}(\Omega )\text{-weak }\Sigma \text{.}
\end{equation*}
\end{theorem}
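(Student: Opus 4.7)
\emph{Strategy.} By Young's inequality \eqref{2.6'} the sequence $(u_\varepsilon\ast v_\varepsilon)$ is bounded in $L^m(\Omega)$, so Theorem \ref{t2.2'} produces a subsequence with weak $\Sigma$-limit $w_0\in L^m(\Omega;\mathcal{B}_{\mathrm{AP}}^m(\mathbb{R}^N))$. It suffices to identify $w_0=u_0\ast\ast v_0$, as the candidate is independent of the extracted subsequence. I would test against functions of the form $\phi(x)\theta(x/\varepsilon)$ with $\phi\in C_c(\Omega)$ and $\theta\in\mathrm{AP}(\mathbb{R}^N)$, since finite sums of such tensor products are dense in $L^{m'}(\Omega;\mathrm{AP}(\mathbb{R}^N))$.

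\emph{Key reduction.} Fubini's theorem followed by the substitution $y=x-t$ gives
\begin{equation*}
\int_\Omega (u_\varepsilon\ast v_\varepsilon)(x)\,\phi(x)\theta(x/\varepsilon)\,dx=\int_\Omega u_\varepsilon(t)F_\varepsilon(t)\,dt,
\end{equation*}
where $F_\varepsilon(t)=\int v_\varepsilon(y)\phi(y+t)\theta((y+t)/\varepsilon)\,dy$. My plan is to show that $F_\varepsilon$ is itself admissible for the weak $\Sigma$-convergence of $u_\varepsilon$, namely $L^{p'}(\Omega)$-close to a function of the form $F_0(t,t/\varepsilon)$ with $F_0\in L^{p'}(\Omega;\mathrm{AP}(\mathbb{R}^N))$. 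Using the strong $\Sigma$-convergence of $v_\varepsilon$, I first approximate $v_\varepsilon$ in $L^q$-norm by a finite sum $\tilde v_\varepsilon(y)=\sum_i a_i(y)\theta_i(y/\varepsilon)$ with $a_i\in C_c(\mathbb{R}^N)$, $\theta_i\in\mathrm{AP}(\mathbb{R}^N)$ (such two-scale admissible functions are dense, and norm-preservation of strong $\Sigma$-convergence upgrades the weak approximation to a strong one). Then using $\theta_i(y/\varepsilon)\theta((y+t)/\varepsilon)=[\theta_i\cdot\tau_{t/\varepsilon}\theta](y/\varepsilon)$, together with the fact that $\mu\mapsto M_y(\theta_i(y)\theta(y+\mu))$ belongs to $\mathrm{AP}(\mathbb{R}^N)$ (a cross-correlation of almost-periodic functions), a standard mean-value argument in $y$ should yield
\begin{equation*}
F_\varepsilon(t)\approx\sum_i g_i(t)h_i(t/\varepsilon)\quad\text{in}\quad L^{p'}(\Omega),
\end{equation*}
with $g_i(t)=\int a_i(y)\phi(y+t)\,dy$ and $h_i(\mu)=M_y(\theta_i(y)\theta(y+\mu))$.

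\emph{Identification.} Setting $F_0(t,s)=\sum_i g_i(t)\widehat{h}_i(s)$, Corollary \ref{c2.1'} then produces
\begin{equation*}
\int u_\varepsilon F_\varepsilon\,dt\longrightarrow \iint_{\Omega\times\mathcal{K}}\widehat u_0(t,r)F_0(t,r)\,dt\,dr.
\end{equation*}
A direct computation using Fubini, the translation invariance of the Haar measure $\beta$ on $\mathcal{K}$, and the definition $(\widehat u_0\ast\ast\widehat v_0)(x,s)=\iint\widehat u_0(t,r)\widehat v_0(x-t,s-r)\,dr\,dt$ should show this integral coincides with $\iint\widehat{(u_0\ast\ast v_0)}(x,s)\phi(x)\widehat\theta(s)\,dx\,ds$, giving $w_0=u_0\ast\ast v_0$.

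\emph{Main obstacle.} The delicate step is the uniform-in-$t$ identification of $F_\varepsilon$ as a strongly $\Sigma$-convergent admissible test function. Theorem \ref{t2.5'} yields the pointwise-in-$t$ convergence of $F_\varepsilon(t)$ along a $t$-dependent subnet $\delta_{t/\varepsilon}\to r_t\in\mathcal{K}$, but this falls short of the $L^{p'}$-norm control and strong $\Sigma$-convergence needed for the weak-strong pairing. It is precisely here that strong (rather than merely weak) $\Sigma$-convergence of $v_\varepsilon$ is indispensable: combined with the uniform continuity inherent to $\mathrm{AP}(\mathbb{R}^N)$ and the compactness of $\mathcal{K}$, it propagates through the convolution to yield the required uniform-in-$t$ approximation, thereby closing the argument.
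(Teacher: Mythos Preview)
The paper does not contain a proof of Theorem \ref{t2.6'}: the result is simply quoted from \cite[Theorem 2.6]{M2AS}, and Remark \ref{r3'} only points to \cite{SW} for the special case $\Omega$ bounded, $p=2$, $q=1$. There is therefore no in-paper argument to compare your attempt against.

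On the merits of your sketch: the overall strategy (reduce to pairing $u_\varepsilon$ against a $t$-dependent test function $F_\varepsilon$, then show $F_\varepsilon$ is $L^{p'}$-close to an admissible oscillating test function) is the right one and is essentially what is carried out in \cite{SW, M2AS}. Your self-diagnosed ``main obstacle'' is genuine but can be closed with two compactness observations you have not quite made explicit. After replacing $v_\varepsilon$ by $\tilde v_\varepsilon=\sum_i a_i(\cdot)\theta_i(\cdot/\varepsilon)$ you need
\[
\int_{\mathbb{R}^N} a_i(y)\,\phi(y+t)\,[\theta_i\cdot\tau_{t/\varepsilon}\theta](y/\varepsilon)\,dy \;-\; g_i(t)\,h_i(t/\varepsilon)\;\longrightarrow\;0
\]
\emph{uniformly in $t$} as $\varepsilon\to 0$. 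This follows because (i) $\theta\in\mathrm{AP}(\mathbb{R}^N)$ forces $\{\theta_i\cdot\tau_\mu\theta:\mu\in\mathbb{R}^N\}$ to be relatively compact in $\mathcal{B}(\mathbb{R}^N)$, and (ii) since $a_i,\phi\in C_c$, the relevant $t$ range is the compact set $\mathrm{supp}\,\phi-\mathrm{supp}\,a_i$ (outside of which $g_i\equiv 0$), over which $\{a_i(\cdot)\phi(\cdot+t)\}_t$ is compact in $L^1(\mathbb{R}^N)$. The weak-$\ast$ convergence $\psi(\cdot/\varepsilon)\rightharpoonup M(\psi)$ is linear in both $\psi$ and the $L^1$ test function, hence uniform over these two compact families; this is precisely where the uniform continuity built into $\mathrm{AP}(\mathbb{R}^N)$ enters. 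The first approximation error (replacing $v_\varepsilon$ by $\tilde v_\varepsilon$) is controlled in $L^{p'}(\Omega)$ via Young's inequality by $\|v_\varepsilon-\tilde v_\varepsilon\|_{L^q}$, which the strong $\Sigma$-convergence of $v_\varepsilon$ makes arbitrarily small. A final $\delta$-then-$\varepsilon$ argument, together with the continuity of $\ast\ast$ from Young's inequality to pass from $\sum_i a_i\otimes\theta_i$ back to $v_0$, finishes the identification.
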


\begin{remark}
\label{r3'}\emph{In the case where the open set }$\Omega $\emph{\ is bounded
and further }$p=2$\emph{\ and }$q=1$\emph{, we have provided a full proof of
this result in \cite{SW} (see especially the proof of Theorem 2 therein).}
\end{remark}

In this work, we shall use the evolving version of $\Sigma $-convergence
involving space and time variable. Before stating it, however we need some
preliminaries.

Let $A_{y}=\mathrm{AP}(\mathbb{R}_{y}^{N})$ and $A_{\tau }=\mathrm{AP}(%
\mathbb{R}_{\tau })$. We know that $A=\mathrm{AP}(\mathbb{R}_{y,\tau
}^{N+1}) $ is the closure in $\mathcal{B}(\mathbb{R}_{y,\tau }^{N+1})$ of
the tensor product $A_{y}\otimes A_{\tau }$. We denote by $\mathcal{K}_{y}$
(resp. $\mathcal{K}_{\tau }$, $\mathcal{K}$) the spectrum of $A_{y}$ (resp. $%
A_{\tau }$, $A$). The same letter $\mathcal{G}$ will denote the Gelfand
transformation on $A_{y}$, $A_{\tau }$ and $A$, as well. Points in $\mathcal{%
K}_{y}$ (resp. $\mathcal{K}_{\tau }$) are denoted by $s$ (resp. $s_{0}$).
The Haar measure on the compact group $\mathcal{K}_{y}$ (resp. $\mathcal{K}%
_{\tau }$) is denoted by $ds$ (resp. $ds_{0}$). We have $\mathcal{K}=%
\mathcal{K}_{y}\times \mathcal{K}_{\tau }$ (Cartesian product) and the Haar
measure on $\mathcal{K}$ is precisely the product measure $\beta =ds\otimes
ds_{0}$; the last equality follows in an obvious way by the density of $%
A_{y}\otimes A_{\tau }$ in $A$ and by the Fubini's theorem.

This being so, a sequence $(u_{\varepsilon })_{\varepsilon >0}\subset
L^{p}(Q)$\ ($1\leq p<\infty $) is said to weakly $\Sigma $-converge\ in $%
L^{p}(Q)$\ to some $u_{0}\in L^{p}(Q;\mathcal{B}_{\mathrm{AP}}^{p}(\mathbb{R}%
_{y,\tau }^{N+1}))$\ if as $\varepsilon \rightarrow 0$, we have 
\begin{equation*}
\int_{Q}u_{\varepsilon }(x,t)f\left( x,t,\frac{x}{\varepsilon },\frac{t}{%
\varepsilon }\right) dxdt\rightarrow \iint_{Q\times \mathcal{K}}\widehat{u}%
_{0}(x,t,s,s_{0})\widehat{f}(x,t,s,s_{0})dxdtdsds_{0}
\end{equation*}%
for every $f\in L^{p^{\prime }}(Q;\mathrm{AP}(\mathbb{R}_{y,\tau }^{N+1}))$\
($1/p^{\prime }=1-1/p$).

\begin{remark}
\label{r2.2'}\emph{The conclusions of Theorems \ref{t2.2'} and \ref{t2.3'}
are still valid mutatis mutandis in the present context (change there }$%
\Omega $\emph{\ into }$Q$\emph{\ in Theorem \ref{t2.2'}, }$W^{1,p}(\Omega )$%
\emph{\ into }$L^{p}(0,T;W^{1,p}(\Omega ))$\emph{\ and }$W^{1,p}(\Omega
)\times L^{p}(\Omega ;\mathcal{B}_{\#\mathrm{AP}}^{1,p}(\mathbb{R}^{N}))$%
\emph{\ into }$L^{p}(0,T;W^{1,p}(\Omega ))\times L^{p}(Q;\mathcal{B}_{%
\mathrm{AP}}^{p}(\mathbb{R}_{\tau };\mathcal{B}_{\#\mathrm{AP}}^{1,p}(%
\mathbb{R}_{y}^{N})))$\emph{).}
\end{remark}

\medskip

Now, let $\tau \in \mathbb{R}$, and let $\left( u_{\varepsilon }\right)
_{\varepsilon >0}\subset L^{p}\left( Q\right) $\ $(1\leq p<\infty )$\ be a
weakly $\Sigma $-convergent sequence in $L^{p}\left( Q\right) $ to $u_{0}\in
L^{p}(Q;\mathcal{B}_{\mathrm{AP}}^{p}(\mathbb{R}^{N+1}))$. Set 
\begin{equation*}
v_{\varepsilon }(x,t)=u_{\varepsilon }(x,t+\tau )\text{ for }(x,t)\in
Q-(0,\tau )\equiv \Omega \times (-\tau ,T-\tau ).
\end{equation*}%
Then $v_{\varepsilon }\rightarrow v_{0}$ in $L^{p}(Q-(0,\tau ))$-weak $%
\Sigma $ where $v_{0}\in L^{p}(Q-(0,\tau );\mathcal{B}_{\mathrm{AP}}^{p}(%
\mathbb{R}^{N+1}))$ is defined by 
\begin{equation*}
\widehat{v}_{0}(x,t,s,s_{0})=\widehat{u}_{0}(x,t+\tau ,s,s_{0}+r_{0})\text{,
\ }(x,t,s,s_{0})\in \lbrack Q-(0,\tau )]\times \mathcal{K},
\end{equation*}%
the micro-translation $r_{0}$ being a weak$\ast $ cluster point of the
sequence $(\delta _{\tau /\varepsilon })_{\varepsilon >0}$. A similar
conclusion holds for Theorem \ref{t2.6'} mutatis mutandis.

Consider a sequence $(v_{\varepsilon })_{\varepsilon >0}\subset L^{p}(Q)$ ($%
1\leq p<\infty $) which is weakly $\Sigma $-convergent in $L^{p}(Q)$ to some 
$v_{0}\in L^{p}(Q;\mathcal{B}_{\mathrm{AP}}^{p}(\mathbb{R}^{N+1}))$. Define 
\begin{equation*}
w_{\varepsilon }(x,t)=\int_{0}^{t}v_{\varepsilon }(x,\tau )d\tau \text{ a.e. 
}(x,t)\in Q.
\end{equation*}%
Then the following important consequence of Theorem \ref{t2.6'} holds.

\begin{corollary}
\label{c4.2}We have 
\begin{equation*}
w_{\varepsilon }\rightarrow w_{0}\text{ in }L^{p}(Q)\text{-weak }\Sigma 
\text{ as }\varepsilon \rightarrow 0
\end{equation*}%
where $w_{0}\in L^{p}(Q;\mathcal{B}_{\mathrm{AP}}^{p}(\mathbb{R}^{N}))$ is
defined by 
\begin{equation*}
\widehat{w}_{0}(x,t,s)=\int_{0}^{t}\int_{\mathcal{K}_{\tau }}\widehat{v}%
_{0}(x,\tau ,s,r_{0})dr_{0}d\tau ,\ (x,t,s)\in Q\times \mathcal{K}_{y}.
\end{equation*}
\end{corollary}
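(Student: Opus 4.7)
The plan is to realize $w_\varepsilon$ as a time-only convolution of $v_\varepsilon$ with a fixed deterministic kernel and then invoke the time-variable ``mutatis mutandis'' version of Theorem~\ref{t2.6'} mentioned in the paragraph above.

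First I would extend $v_\varepsilon$ by zero outside $Q$ and set $H:=\mathbf{1}_{[0,T]}$. A direct verification gives, for $(x,t)\in Q$,
\begin{equation*}
w_\varepsilon(x,t)=\int_{0}^{t}v_\varepsilon(x,\tau)\,d\tau=\int_{\mathbb{R}}v_\varepsilon(x,\tau)H(t-\tau)\,d\tau=(v_\varepsilon\ast_t H)(x,t),
\end{equation*}
where $\ast_t$ denotes convolution in the time variable only (the last equality uses $\mathrm{supp}\,v_\varepsilon(x,\cdot)\subset[0,T]$ and $t\in(0,T)$). Since $H$ is independent of $\varepsilon$ and of any fast-oscillation variable, the constant sequence $(H)_{\varepsilon>0}$ strongly $\Sigma$-converges in $L^{1}(\mathbb{R})$ to $H$ itself, regarded as an element of $L^{1}(\mathbb{R};\mathcal{B}_{\mathrm{AP}}^{1}(\mathbb{R}_\tau))$ that is constant in the $\mathcal{K}_\tau$-variable.

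Next I would invoke the time-variable analog of Theorem~\ref{t2.6'} with Young exponents $(p,1,p)$ (since $1/p+1/1=1+1/p$), applied with the spatial variable $x\in\Omega$ carried as a parameter. This yields
\begin{equation*}
w_\varepsilon=v_\varepsilon\ast_t H\longrightarrow v_0\ast\ast_t H\quad\text{in }L^{p}(Q)\text{-weak }\Sigma,
\end{equation*}
where $\ast\ast_t$ is the two-scale convolution in time. Because $\widehat{H}$ carries no $\mathcal{K}_\tau$-dependence, the defining integral for $\ast\ast_t$ collapses the $\mathcal{K}_\tau$-variable against $\widehat{v}_0$:
\begin{equation*}
\widehat{(v_0\ast\ast_t H)}(x,t,s)=\int_{\mathbb{R}}\int_{\mathcal{K}_\tau}\widehat{v}_0(x,\tau,s,r_0)H(t-\tau)\,dr_0\,d\tau=\int_{0}^{t}\int_{\mathcal{K}_\tau}\widehat{v}_0(x,\tau,s,r_0)\,dr_0\,d\tau,
\end{equation*}
for $(x,t,s)\in Q\times\mathcal{K}_y$, which matches the claimed formula for $\widehat{w}_0$; in particular the limit belongs to $L^{p}(Q;\mathcal{B}_{\mathrm{AP}}^{p}(\mathbb{R}^{N}))$ (independent of $s_0$), as stated.

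The main obstacle I foresee is the second step: carefully transplanting Theorem~\ref{t2.6'} from a spatial convolution on $\mathbb{R}^{N}$ to a time-only convolution in which the spatial variable is carried as a parameter. One has to re-examine the construction in \cite{M2AS} to confirm that, with an $\varepsilon$-independent kernel, the $\mathcal{K}_y$-oscillation of $v_0$ is preserved while the $\mathcal{K}_\tau$-oscillation is integrated out, so that the two-scale convolution collapses to the formula above. An alternative fallback, in case this transplant is less clean than expected, is to test $w_\varepsilon$ against $f\in L^{p'}(Q;\mathrm{AP}(\mathbb{R}^{N+1}))$, apply Fubini to exchange the $\tau$- and $t$-integrations, and identify the limit on tensor-product test functions $f(x,t,y,\tau')=\alpha(x,y)\beta(t,\tau')$ before extending by density.
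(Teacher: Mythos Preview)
Your proposal is correct and follows essentially the same route as the paper: write $w_\varepsilon$ as a time convolution of $v_\varepsilon$ against a fixed kernel, apply the time-variable version of Theorem~\ref{t2.6'}, and observe that the limit is independent of $s_0$. The only cosmetic differences are that the paper uses the constant kernel $1$ (with the convolution understood causally on $[0,t]$) while you use $H=\mathbf{1}_{[0,T]}\in L^1(\mathbb{R})$, and the paper obtains independence of $s_0$ via translation invariance of the Haar measure on $\mathcal{K}_\tau$ rather than from $\widehat{H}$ being constant in $s_0$; both justifications are equivalent here.
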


\begin{proof}
By definition, $w_{\varepsilon }(x,t)=(1\ast v_{\varepsilon })(x,t)$ where
here, $1$ stands for the constant sequence assuming value $1$, and the
convolution is taken with respect to the variable $t$. Theorem \ref{t2.6'}
tells us that $w_{\varepsilon }\rightarrow 1\ast \ast v_{0}$ in $L^{p}(Q)$%
-weak $\Sigma $ as $\varepsilon \rightarrow 0$ where by definition, 
\begin{equation*}
(1\ast \ast \widehat{v}_{0})(x,t,s,s_{0})=\int_{0}^{t}\int_{\mathcal{K}%
_{\tau }}\widehat{v}_{0}(x,\tau ,s,s_{0}-r_{0})dr_{0}d\tau \text{ (}%
(x,t,s,s_{0})\in Q\times \mathcal{K}\text{)}
\end{equation*}%
which does not depend on $s_{0}$ as $\int_{\mathcal{K}_{\tau }}\widehat{v}%
_{0}(x,\tau ,s,s_{0}-r_{0})dr_{0}=\int_{\mathcal{K}_{\tau }}\widehat{v}%
_{0}(x,\tau ,s,r_{0})dr_{0}$; remind that $\mathcal{K}_{\tau }$ is a
topological group. The proof is complete.
\end{proof}

\begin{remark}
\label{r4.2}\emph{It follows immediately that the limit of the sequence }$%
(w_{\varepsilon })_{\varepsilon >0}$\emph{\ defined above does not depend on
the microscopic temporal variable }$\tau $\emph{.}
\end{remark}

\section{Asymptotic behaviour}

Let $\theta $ be the characteristic function of the set $S$ in $\mathbb{Z}%
^{N}$, $S$ being considered in Subsection 1.1. We aim at studying the
asymptotics of the sequence $(\boldsymbol{u}_{\varepsilon },\boldsymbol{v}%
_{\varepsilon },p_{\varepsilon })_{\varepsilon >0}$ defined in Section 2,
under the additional assumption

\begin{itemize}
\item[(A4)] 
\begin{equation}
\theta \text{ is almost periodic (see Subsection 1.1),}  \label{5.1'}
\end{equation}%
\begin{equation}
A_{0},B_{0}\in (B_{\mathrm{AP}}^{2}(\mathbb{R}^{N}))^{N^{2}}\text{, }%
A_{1},B_{1}\in \mathrm{AP}(\mathbb{R}^{N+1})^{N^{2}}\text{ and }\rho _{j}\in 
\mathrm{AP}(\mathbb{R}^{N})\text{ (}j=1,2\text{).}  \label{5.2}
\end{equation}
\end{itemize}

However, for some obvious reasons given in Section 2, we shall rather deal
with the sequence $(\boldsymbol{u}_{\varepsilon },\boldsymbol{w}%
_{\varepsilon },p_{\varepsilon })_{\varepsilon >0}$ satisfying (\ref{1.1}), (%
\ref{1.2'})-(\ref{1.4'}), (\ref{1.5})-(\ref{1.7}).

Throughout this section we do not distinguish between a function $u\in B_{%
\mathrm{AP}}^{p}(\mathbb{R}^{m})$ and its equivalence class $u+\mathcal{N}$
in $\mathcal{B}_{\mathrm{AP}}^{p}(\mathbb{R}^{m})$. This will of course not
lead to any confusion. Here $\Omega $ denotes a bounded open subset of $%
\mathbb{R}^{N}$ with Lipschitz boundary $\partial \Omega $. We still set $%
Q=\Omega \times (0,T)$, and the other notations are as in the preceding
sections.

\subsection{Preliminary results}

To begin with, it is important to recall that $\chi _{j}$ ($j=1,2$) is the
characteristic function of the set $G_{j}$ (defined in Section 1) in $%
\mathbb{R}^{N}$. With this in mind, the following result is the point of
departure of the whole homogenization process.

\begin{lemma}
\label{l5.1'}Under assumption \emph{(\ref{5.1'})} we have 
\begin{equation}
\chi _{j}\in B_{\mathrm{AP}}^{2}(\mathbb{R}^{N})\text{ with }M(\chi _{j})>0%
\text{ for }j=1,2.  \label{5.1}
\end{equation}
\end{lemma}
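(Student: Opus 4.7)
The strategy is to leverage Remark \ref{r1} to reduce everything to the periodic case, and then apply the standard observation that bounded periodic functions lie in $B^p_{\mathrm{AP}}(\mathbb{R}^N)$ for every $p$. First, I would refine the pigeonhole argument of Remark \ref{r1}, applying it along each coordinate axis: since $\{\theta(\cdot + n e_i)\}_{n \in \mathbb{Z}}$ is relatively compact in $\ell^\infty(\mathbb{Z}^N)$, finitely many translates cover it within distance $\tfrac14$, so pigeonhole produces a nontrivial shift $p_i e_i$ with $\|\theta(\cdot + p_i e_i) - \theta\|_{\ell^\infty} < \tfrac12$; the $0$-$1$-valued nature of $\theta$ then forces $\theta(\cdot + p_i e_i) = \theta$. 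This yields positive integers $p_1, \ldots, p_N$ making $\theta$ genuinely $p$-periodic on $\mathbb{Z}^N$, with $p = (p_1,\ldots,p_N)$. Consequently the formula $\chi_1 = \sum_{k \in \mathbb{Z}^N} \theta(k)\, \chi_{k+Y_1}$ forces $\chi_1$ (and $\chi_2 = 1 - \chi_1$ a.e.) to be $p$-periodic on $\mathbb{R}^N$, with period cell $P := [0,p_1] \times \cdots \times [0,p_N]$.

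Second, to place $\chi_1$ in $B^2_{\mathrm{AP}}(\mathbb{R}^N)$, I would approximate $\chi_{Y_1}$ in $L^2(Y)$ by continuous functions $\phi_n \in \mathcal{C}_c(Y)$, which is feasible because $\overline{Y}_1 \subset Y$. Setting
\begin{equation*}
\psi_n(y) = \sum_{k \in \mathbb{Z}^N} \theta(k)\, \phi_n(y - k),
\end{equation*}
the pairwise disjointness of the cells $k + Y$ makes the sum locally finite and $\psi_n$ continuous; moreover $\psi_n$ is $p$-periodic, hence $\psi_n \in \mathrm{AP}(\mathbb{R}^N)$. Since the Besicovitch $L^2$-seminorm of a periodic locally $L^2$ function coincides with the root-mean-square over one period, a cell-by-cell estimate yields
\begin{equation*}
\|\chi_1 - \psi_n\|_2^2 \,=\, \frac{1}{|P|}\int_P |\chi_1 - \psi_n|^2 \,dy \,\leq\, \|\chi_{Y_1} - \phi_n\|_{L^2(Y)}^2 \,\longrightarrow\, 0,
\end{equation*}
proving $\chi_1 \in B^2_{\mathrm{AP}}(\mathbb{R}^N)$; then $\chi_2 = 1 - \chi_1$ belongs to that space as well.

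Third, I would compute the mean via the periodic-function identity $M(f) = |P|^{-1}\int_P f$, which gives
\begin{equation*}
M(\chi_1) \,=\, \frac{|Y_1|}{|P|} \cdot \#\bigl\{k \in \mathbb{Z}^N \cap [0,p_1) \times \cdots \times [0,p_N) : \theta(k) = 1\bigr\}.
\end{equation*}
Positivity of $M(\chi_1)$ follows because $S$ is nonempty and $p$-periodic, so at least one lattice point of $P$ lies in $S$; positivity of $M(\chi_2) = 1 - M(\chi_1) \geq 1 - |Y_1| = |Y_2|$ holds since $|Y_2| > 0$. The only nonroutine step is the first: verifying that Remark \ref{r1}'s one-period-vector argument actually promotes to componentwise periodicity along every axis. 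Everything else is the classical inclusion of bounded periodic functions into $B^2_{\mathrm{AP}}$ together with a straightforward averaging computation.
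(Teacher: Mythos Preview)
Your argument is correct and essentially self-contained, whereas the paper's proof simply invokes external results: it cites \cite[Corollary 3.2]{NG1} and \cite[Proposition 4.1, Corollary 4.2]{ACAP} to get $\chi_1 \in B^2_{\mathrm{AP}}(\mathbb{R}^N)$ with $M(\chi_1)>0$, and then deduces the same for $\chi_2 = 1-\chi_1$ via $M(\chi_1)<1$ from the geometry of the domain. Your route is genuinely different in that you exploit the observation of Remark~\ref{r1} (sharpened coordinate-wise, which is the one point requiring care and which you handle correctly via the pigeonhole argument along each axis) to reduce to the fully periodic case, after which membership in $B^2_{\mathrm{AP}}$ and the mean-value computation become elementary periodic-function facts. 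What your approach buys is transparency and independence from the cited literature; what the paper's citation-based approach buys is brevity and, presumably, a framework (in \cite{NG1, ACAP}) that covers distribution functions $\theta$ beyond characteristic functions, where the reduction to periodicity is unavailable. Your inequality $M(\chi_2) \geq |Y_2|$ is also a slightly sharper statement than the paper's bare $M(\chi_2)>0$.
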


\begin{proof}
By virtue of \cite[Corollary 3.2]{NG1} (see also \cite[Proposition 4.1 and
Corollary 4.2]{ACAP}) we have $\chi _{1}\in B_{\mathrm{AP}}^{2}(\mathbb{R}%
^{N})$ with $M(\chi _{1})>0$. On the other hand, since $\chi _{1}+\chi
_{2}=1 $ a.e. in $\mathbb{R}^{N}$, it follows at once that $\chi _{2}\in B_{%
\mathrm{AP}}^{2}(\mathbb{R}^{N})$ with $M(\chi _{2})\geq 0$. But by the
definition of the geometry of the domain, we have $M(\chi _{1})<1$, and from
the equality $M(\chi _{1})+M(\chi _{2})=1$, it readily follows that $M(\chi
_{2})>0$.
\end{proof}

The next result relies on the previous one and its easy proof can be found
in \cite{NG1} (see also \cite{ACAP}).

\begin{lemma}
\label{l5.1}Let $(u_{\varepsilon })_{\varepsilon >0}$ be a sequence in $%
L^{p}(Q)$ ($1\leq p<\infty $) which weakly $\Sigma $-converges in $L^{p}(Q)$
towards $u_{0}\in L^{p}(Q;\mathcal{B}_{\mathrm{AP}}^{2}(\mathbb{R}^{N+1}))$.
For $j=1,2,$ let $G_{j}$ be the set defined in Section \emph{1}. Then, as $%
\varepsilon \rightarrow 0$, 
\begin{equation*}
u_{\varepsilon }\chi _{j}^{\varepsilon }\rightarrow u_{0}\chi _{j}\text{ in }%
L^{p}(Q)\text{-weak }\Sigma \text{.}
\end{equation*}
\end{lemma}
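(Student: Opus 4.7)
The plan is to apply Corollary~\ref{c2.1'} (in the space-time version mentioned in Remark~\ref{r2.2'}) with $v_{\varepsilon}=\chi_{j}^{\varepsilon}$ and $v_{0}=\chi_{j}$ (viewed as an element of $L^{p'}(Q;\mathcal{B}_{\mathrm{AP}}^{p'}(\mathbb{R}^{N+1}))$ which is independent of the time variable). To do so I must check the three hypotheses of that corollary: the weak $\Sigma$-convergence of $(u_{\varepsilon})$ is given; the $L^{\infty}$-boundedness of $(\chi_{j}^{\varepsilon})$ is immediate since $\chi_{j}$ is $\{0,1\}$-valued; so the core task is to prove that
\begin{equation*}
\chi_{j}^{\varepsilon}\rightarrow \chi_{j}\ \text{in}\ L^{p'}(Q)\text{-strong }\Sigma\text{ as }\varepsilon\to 0.
\end{equation*}

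First I would upgrade Lemma~\ref{l5.1'} by showing $\chi_{j}\in B_{\mathrm{AP}}^{q}(\mathbb{R}^{N})$ for every $1\leq q<\infty$. For $q\leq 2$ this is contained in the inclusion $B_{\mathrm{AP}}^{2}(\mathbb{R}^{N})\subset B_{\mathrm{AP}}^{q}(\mathbb{R}^{N})$ recalled in Section~3. For $q>2$, I would pick $\phi_{n}\in \mathrm{AP}(\mathbb{R}^{N})$ with $\|\chi_{j}-\phi_{n}\|_{2}\to 0$ and replace them by the truncations $\widetilde{\phi}_{n}:=\max(0,\min(1,\phi_{n}))$, which remain in $\mathrm{AP}(\mathbb{R}^{N})$ because almost periodicity is stable under continuous lattice operations. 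Since $0\leq \chi_{j},\widetilde{\phi}_{n}\leq 1$, the elementary estimate
\begin{equation*}
|\chi_{j}-\widetilde{\phi}_{n}|^{q}\leq |\chi_{j}-\widetilde{\phi}_{n}|^{2}\leq |\chi_{j}-\phi_{n}|^{2}
\end{equation*}
together with monotonicity of the mean value gives $\widetilde{\phi}_{n}\to \chi_{j}$ in $B_{\mathrm{AP}}^{q}$-norm.

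Next I would establish the weak $\Sigma$-convergence $\chi_{j}^{\varepsilon}\to \chi_{j}$ in $L^{p'}(Q)$. Testing against an arbitrary $f\in L^{p}(Q;\mathrm{AP}(\mathbb{R}^{N+1}))$, I would first handle the dense class $f=\varphi\otimes g$ with $\varphi\in \mathcal{C}_{c}(Q)$ and $g\in \mathrm{AP}(\mathbb{R}^{N+1})$, by approximating $\chi_{j}$ in $B_{\mathrm{AP}}^{p'}$-norm by $\widetilde{\phi}_{n}\in \mathrm{AP}(\mathbb{R}^{N})$ as above: for each fixed $n$ the product $\widetilde{\phi}_{n}g$ lies in $\mathrm{AP}(\mathbb{R}^{N+1})$, so the mean value property (P)$_{2}$ yields the desired convergence, and one passes to the limit in $n$ using Hölder's inequality and the uniform $L^{p'}$-bound $\|(\chi_{j}-\widetilde{\phi}_{n})(\cdot/\varepsilon)\|_{L^{p'}(\Omega)}^{p'}\to |\Omega|\,M(|\chi_{j}-\widetilde{\phi}_{n}|^{p'})$, which is small uniformly in $\varepsilon$. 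To upgrade this to \emph{strong} $\Sigma$-convergence, it remains only to check the norm-convergence
\begin{equation*}
\|\chi_{j}^{\varepsilon}\|_{L^{p'}(Q)}^{p'}=T\!\!\int_{\Omega}\chi_{j}(x/\varepsilon)\,dx\longrightarrow T|\Omega|\,M(\chi_{j})=\|\chi_{j}\|_{L^{p'}(Q\times \mathcal{K})}^{p'},
\end{equation*}
where I used $|\chi_{j}|^{p'}=\chi_{j}$ (because $\chi_{j}\in\{0,1\}$) and property (P)$_{2}$ applied to $\chi_{j}\in B_{\mathrm{AP}}^{1}(\mathbb{R}^{N})$.

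The main (and essentially only) obstacle is the first step: verifying $\chi_{j}\in B_{\mathrm{AP}}^{q}(\mathbb{R}^{N})$ for all $q$, because Lemma~\ref{l5.1'} is stated only for $q=2$. Once this is secured, the truncation trick based on $\chi_{j}\in\{0,1\}$ makes the strong $\Sigma$-convergence transparent, and Corollary~\ref{c2.1'} then delivers the conclusion $u_{\varepsilon}\chi_{j}^{\varepsilon}\to u_{0}\chi_{j}$ in $L^{p}(Q)$-weak $\Sigma$ at once.
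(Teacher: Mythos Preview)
Your approach is correct and is essentially the route that the paper's cited references \cite{NG1} and \cite{ACAP} take (the paper itself gives no argument, only these citations): one shows that $\chi_{j}\in B_{\mathrm{AP}}^{p'}(\mathbb{R}^{N})\cap L^{\infty}(\mathbb{R}^{N})$, deduces that $\chi_{j}^{\varepsilon}\to\chi_{j}$ in $L^{p'}(Q)$-strong $\Sigma$, and then invokes the product rule of Corollary~\ref{c2.1'} (in its obvious space--time version). Your truncation argument for $\chi_{j}\in B_{\mathrm{AP}}^{q}$ is clean and is exactly the kind of device used in those references; the fact that $\chi_{j}$ is $\{0,1\}$-valued is what makes everything work.

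Two small points worth tightening. First, Corollary~\ref{c2.1'} is stated for $1<p<\infty$, so the endpoint $p=1$ of the lemma is not covered by your argument as written; this case needs a separate (elementary) treatment, e.g.\ by testing directly against $f\in L^{\infty}(Q;\mathrm{AP}(\mathbb{R}^{N+1}))$ and using the same approximation $\widetilde{\phi}_{n}$. Second, your sentence ``$\|(\chi_{j}-\widetilde{\phi}_{n})(\cdot/\varepsilon)\|_{L^{p'}(\Omega)}^{p'}\to |\Omega|\,M(|\chi_{j}-\widetilde{\phi}_{n}|^{p'})$, which is small uniformly in $\varepsilon$'' is slightly imprecise: what you actually need is only the one-sided bound $\limsup_{\varepsilon\to 0}\|(\chi_{j}-\widetilde{\phi}_{n})(\cdot/\varepsilon)\|_{L^{p'}(\Omega)}\to 0$ as $n\to\infty$, and this follows readily from your pointwise inequality $|\chi_{j}-\widetilde{\phi}_{n}|^{p'}\leq |\chi_{j}-\phi_{n}|^{2}$ (for $p'\geq 2$; the case $p'<2$ is handled by the inclusion $B_{\mathrm{AP}}^{2}\subset B_{\mathrm{AP}}^{p'}$) together with the elementary estimate $\limsup_{\varepsilon}\int_{\Omega}|h(x/\varepsilon)|^{2}dx\leq |B_{R}|\,\|h\|_{2}^{2}$ valid for any $h\in B_{\mathrm{AP}}^{2}(\mathbb{R}^{N})$ and any ball $B_{R}\supset\Omega$.
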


Let $E$ be an ordinary sequence defined as at the beginning of Section 4. As
in Section 2, let 
\begin{equation}
\boldsymbol{u}^{\varepsilon }=\chi _{1}^{\varepsilon }\boldsymbol{u}%
_{\varepsilon }+\chi _{2}^{\varepsilon }\boldsymbol{w}_{\varepsilon },
\label{5.3}
\end{equation}%
$\boldsymbol{u}_{\varepsilon }$ and $\boldsymbol{w}_{\varepsilon }$ be as in
Section 2. Then $\boldsymbol{u}^{\varepsilon }$ is the displacement of the $%
\varepsilon $-fluid-solid structure. We know from Section 2 that $%
\boldsymbol{u}^{\varepsilon }\in H^{1}(0,T;H_{0}^{1}(\Omega )^{N})$ with $%
\partial ^{2}\boldsymbol{u}^{\varepsilon }/\partial t^{2}\in
L^{2}(0,T;L^{2}(\Omega )^{N})\equiv L^{2}(Q)^{N}$. Also, as in Section 2, we
set $\rho ^{\varepsilon }=\chi _{1}^{\varepsilon }\rho _{1}^{\varepsilon
}+\chi _{2}^{\varepsilon }\rho _{2}^{\varepsilon }$ (the density of the $%
\varepsilon $-fluid-solid structure). By virtue of Proposition \ref{p2.1},
we derive the existence of a subsequence $E^{\prime }$ of $E$ and of a
function $\boldsymbol{u}_{0}\in H^{1}(0,T;L^{2}(\Omega )^{N})$ such that 
\begin{equation}
\boldsymbol{u}^{\varepsilon }\rightarrow \boldsymbol{u}_{0}\text{ in }%
H^{1}(0,T;L^{2}(\Omega )^{N})\text{ strongly as }E^{\prime }\ni \varepsilon
\rightarrow 0.  \label{5.4}
\end{equation}%
Invoking the diagonal process, we extract from $(\boldsymbol{u}^{\varepsilon
})_{\varepsilon \in E^{\prime }}$ another subsequence (still denoted by $(%
\boldsymbol{u}^{\varepsilon })_{\varepsilon \in E^{\prime }}$) such that 
\begin{equation}
\boldsymbol{u}^{\varepsilon }\rightarrow \boldsymbol{u}_{0}\text{ in }%
H^{1}(0,T;H_{0}^{1}(\Omega )^{N})\text{-weak and }\frac{\partial ^{2}%
\boldsymbol{u}^{\varepsilon }}{\partial t^{2}}\rightarrow \frac{\partial ^{2}%
\boldsymbol{u}_{0}}{\partial t^{2}}\text{ in }L^{2}(Q)^{N}\text{-weak.}
\label{5.5}
\end{equation}%
It follows that $\boldsymbol{u}_{0}\in H^{1}(0,T;H_{0}^{1}(\Omega )^{N})$
with $\partial ^{2}\boldsymbol{u}_{0}/\partial t^{2}\in L^{2}(Q)^{N}$. We
infer from Theorem \ref{t2.3'} and Remark \ref{r2.2'} that (up to a
subsequence of $E^{\prime }$ not relabeled) 
\begin{equation}
\nabla \boldsymbol{u}^{\varepsilon }\rightarrow \nabla \boldsymbol{u}_{0}+%
\overline{\nabla }_{y}\boldsymbol{u}_{1}\text{ in }L^{2}(Q)^{N^{2}}\text{%
-weak }\Sigma  \label{5.5'}
\end{equation}%
where $\boldsymbol{u}_{1}\in L^{2}(Q;\mathcal{B}_{\mathrm{AP}}^{2}(\mathbb{R}%
_{\tau };\mathcal{B}_{\#\mathrm{AP}}^{1,2}(\mathbb{R}_{y}^{N}))^{N})$.
Applying Lemma \ref{l5.1}, we readily see that, as $E^{\prime }\ni
\varepsilon \rightarrow 0$, 
\begin{equation}
\chi _{1}^{\varepsilon }\nabla \boldsymbol{u}_{\varepsilon }\equiv \chi
_{1}^{\varepsilon }\nabla \boldsymbol{u}^{\varepsilon }\rightarrow \chi
_{1}(\nabla \boldsymbol{u}_{0}+\overline{\nabla }_{y}\boldsymbol{u}_{1})%
\text{ in }L^{2}(Q)^{N^{2}}\text{-weak }\Sigma ,  \label{5.6}
\end{equation}%
and 
\begin{equation}
\chi _{2}^{\varepsilon }\nabla \boldsymbol{w}_{\varepsilon }\equiv \chi
_{2}^{\varepsilon }\nabla \boldsymbol{u}^{\varepsilon }\rightarrow \chi
_{2}(\nabla \boldsymbol{u}_{0}+\overline{\nabla }_{y}\boldsymbol{u}_{1})%
\text{ in }L^{2}(Q)^{N^{2}}\text{-weak }\Sigma .  \label{5.7}
\end{equation}%
We also observe that 
\begin{equation}
\chi _{2}^{\varepsilon }\nabla \frac{\partial \boldsymbol{w}_{\varepsilon }}{%
\partial t}\equiv \chi _{2}^{\varepsilon }\nabla \frac{\partial \boldsymbol{u%
}^{\varepsilon }}{\partial t}\rightarrow \chi _{2}(\nabla \frac{\partial 
\boldsymbol{u}_{0}}{\partial t}+\overline{\nabla }_{y}\boldsymbol{v}_{1})%
\text{ in }L^{2}(Q)^{N^{2}}\text{-weak }\Sigma  \label{5.8}
\end{equation}%
when $E^{\prime }\ni \varepsilon \rightarrow 0$, where $M_{\tau }(%
\boldsymbol{v}_{1})=\partial \boldsymbol{u}_{1}/\partial t$ in $G_{2}$.
Indeed for the last convergence result above, we have, in view of (\ref{5.5}%
), 
\begin{equation*}
\chi _{2}^{\varepsilon }\frac{\partial \boldsymbol{w}_{\varepsilon }}{%
\partial t}\rightarrow \chi _{2}\frac{\partial \boldsymbol{u}_{0}}{\partial t%
}\text{ in }L^{2}(Q)^{N^{2}}\text{-weak }\Sigma .
\end{equation*}%
Also, there exists $\boldsymbol{v}_{1}\in L^{2}(Q;\mathcal{B}_{\mathrm{AP}%
}^{2}(\mathbb{R}_{\tau };\mathcal{B}_{\#\mathrm{AP}}^{1,2}(\mathbb{R}%
_{y}^{N}))^{N})$ such that 
\begin{equation*}
\chi _{2}^{\varepsilon }\nabla \frac{\partial \boldsymbol{w}_{\varepsilon }}{%
\partial t}\rightarrow \chi _{2}(\nabla \frac{\partial \boldsymbol{u}_{0}}{%
\partial t}+\overline{\nabla }_{y}\boldsymbol{v}_{1})\text{ in }%
L^{2}(Q)^{N^{2}}\text{-weak }\Sigma .
\end{equation*}%
We seek the relationship between $\boldsymbol{u}_{1}$ and $\boldsymbol{v}%
_{1} $ in $G_{2}$. First, invoking Corollary \ref{c4.2}, 
\begin{equation*}
\chi _{2}^{\varepsilon }\nabla \boldsymbol{w}_{\varepsilon }=\chi
_{2}^{\varepsilon }\int_{0}^{t}\nabla \frac{\partial \boldsymbol{w}%
_{\varepsilon }}{\partial \eta }(\eta )d\eta \rightarrow \chi
_{2}\int_{0}^{t}\left( \nabla \frac{\partial \boldsymbol{u}_{0}}{\partial
\eta }+M_{\tau }(\overline{\nabla }_{y}\boldsymbol{v}_{1})\right) (\eta
)d\eta \text{ in }L^{2}(Q)^{N^{2}}\text{-weak }\Sigma .
\end{equation*}%
The uniqueness of the limit implies 
\begin{equation*}
\chi _{2}(\nabla \boldsymbol{u}_{0}+\overline{\nabla }_{y}\boldsymbol{u}%
_{1})=\chi _{2}\int_{0}^{t}\left( \nabla \frac{\partial \boldsymbol{u}_{0}}{%
\partial \eta }+M_{\tau }(\overline{\nabla }_{y}\boldsymbol{v}_{1})\right)
(\eta )d\eta .
\end{equation*}%
Taking the weak derivative with respect to $t$ in the above equality, 
\begin{equation*}
\chi _{2}\overline{\nabla }_{y}\frac{\partial \boldsymbol{u}_{1}}{\partial t}%
=\chi _{2}M_{\tau }(\overline{\nabla }_{y}\boldsymbol{v}_{1})\text{, or }%
\chi _{2}\overline{\nabla }_{y}\left( \frac{\partial \boldsymbol{u}_{1}}{%
\partial t}-M_{\tau }(\boldsymbol{v}_{1})\right) =0.
\end{equation*}%
The algebra of continuous almost periodic functions being ergodic, it
follows from \cite[Lemma 2.2]{Wright} that $\frac{\partial \boldsymbol{u}_{1}%
}{\partial t}-M_{\tau }(\boldsymbol{v}_{1})$ agrees in $G_{2}$ with a
constant in $\mathbb{R}^{N}$, hence 
\begin{equation}
\frac{\partial \boldsymbol{u}_{1}}{\partial t}-M_{\tau }(\boldsymbol{v}%
_{1})=0\text{\ in }G_{2}  \label{5.8'}
\end{equation}%
since the sole function $\psi $ in $\mathcal{B}_{\mathrm{AP}}^{2}(\mathbb{R}%
_{\tau };\mathcal{B}_{\#\mathrm{AP}}^{1,2}(\mathbb{R}_{y}^{N}))^{N}$
satisfying $\overline{\nabla }_{y}\psi =0$ is zero. It comes that $%
\boldsymbol{u}_{1}\in H^{1}(0,T;L^{2}(\Omega ;\mathcal{B}_{\mathrm{AP}}^{2}(%
\mathbb{R}_{\tau };\mathcal{B}_{\#\mathrm{AP}}^{1,2}(\mathbb{R}%
_{y}^{N}))^{N}))$ does not depend on $\tau $ in $G_{2}$. This will be of
great interest in the next subsection. Indeed, this will allow us to select
test functions not depending on $\tau $ in $G_{2}$.

\subsection{Passage to the limit}

In this section we pass to the limit in the variational formulation of (\ref%
{1.1}), (\ref{1.2'})-(\ref{1.4'}), (\ref{1.5})-(\ref{1.7}) (which is
equivalent to the one of (\ref{1.1})-(\ref{1.7})). For the sake of
simplicity, we shall omit throughout this subsection to precise that $%
E^{\prime }\ni \varepsilon \rightarrow 0$ when dealing with a convergence
result. Bearing this in mind, let $\boldsymbol{\Psi }=(\boldsymbol{\psi }%
_{0},\boldsymbol{\psi }_{1})$ be such that $\boldsymbol{\psi }_{0}=(\psi
_{0}^{k})_{1\leq k\leq N}\in \mathcal{C}_{0}^{\infty }(Q)^{N}$ and $%
\boldsymbol{\psi }_{1}=(\psi _{1,k})_{1\leq k\leq N}\in (\mathcal{C}%
_{0}^{\infty }(Q)\otimes \mathrm{AP}^{\infty }(\mathbb{R}^{N+1}))^{N}$ with $%
\partial \boldsymbol{\psi }_{1}/\partial \tau =0$ in $G_{2}$ (that is, $%
\boldsymbol{\psi }_{1}$ does not depend on $\tau $ in $G_{2}$). We define $%
\boldsymbol{\Psi }_{\varepsilon }=\boldsymbol{\psi }_{0}+\varepsilon 
\boldsymbol{\psi }_{1}^{\varepsilon }$ by 
\begin{equation*}
\boldsymbol{\Psi }_{\varepsilon }(x,t)=\boldsymbol{\psi }_{0}(x,t)+%
\varepsilon \boldsymbol{\psi }_{1}\left( x,t,\frac{x}{\varepsilon },\frac{t}{%
\varepsilon }\right) \text{ for }(x,t)\in Q.
\end{equation*}%
Then we have $\boldsymbol{\Psi }_{\varepsilon }\in \mathcal{C}_{0}^{\infty
}(Q)^{N}$ and, taking $\boldsymbol{\Psi }_{\varepsilon }$ as a test function
in the variational formulation of (\ref{1.1}), (\ref{1.2'})-(\ref{1.4'}), (%
\ref{1.5})-(\ref{1.7}), we get (after using the interface conditions (\ref%
{1.5}) and (\ref{1.4'})) 
\begin{equation}
\begin{array}{l}
-\int_{Q}\rho ^{\varepsilon }\frac{\partial ^{2}\boldsymbol{u}^{\varepsilon }%
}{\partial t^{2}}\cdot \boldsymbol{\Psi }_{\varepsilon }dxdt+\int_{Q}\chi
_{1}^{\varepsilon }\left( A_{0}^{\varepsilon }\nabla \boldsymbol{u}%
_{\varepsilon }+A_{1}^{\varepsilon }\ast \nabla \boldsymbol{u}_{\varepsilon
}\right) \cdot \nabla \boldsymbol{\Psi }_{\varepsilon }dxdt \\ 
\\ 
\ \ \ +\int_{Q}\chi _{2}^{\varepsilon }\left( B_{0}^{\varepsilon }\nabla 
\frac{\partial \boldsymbol{w}_{\varepsilon }}{\partial t}+B_{1}^{\varepsilon
}\ast \nabla \frac{\partial \boldsymbol{w}_{\varepsilon }}{\partial t}%
\right) \cdot \nabla \boldsymbol{\Psi }_{\varepsilon }dxdt \\ 
\\ 
\ \ \ \ -\int_{Q}\chi _{2}^{\varepsilon }p_{\varepsilon }\Div\boldsymbol{%
\Psi }_{\varepsilon }dxdt=\int_{Q}(\chi _{1}^{\varepsilon }\rho
_{1}^{\varepsilon }\boldsymbol{f}+\chi _{2}^{\varepsilon }\rho
_{2}^{\varepsilon }\boldsymbol{g}_{\varepsilon })\cdot \boldsymbol{\Psi }%
_{\varepsilon }dxdt.%
\end{array}
\label{5.9}
\end{equation}%
Our main objective is to pass to the limit in (\ref{5.9}). Let us consider
the terms in (\ref{5.9}) separately. We first deal with the term $%
\int_{Q}\rho ^{\varepsilon }\frac{\partial ^{2}\boldsymbol{u}^{\varepsilon }%
}{\partial t^{2}}\cdot \boldsymbol{\Psi }_{\varepsilon }dxdt$. We have 
\begin{eqnarray*}
\int_{Q}\rho ^{\varepsilon }\frac{\partial ^{2}\boldsymbol{u}^{\varepsilon }%
}{\partial t^{2}}\cdot \boldsymbol{\Psi }_{\varepsilon }dxdt
&=&-\int_{Q}\rho ^{\varepsilon }\frac{\partial \boldsymbol{u}^{\varepsilon }%
}{\partial t}\cdot \frac{\partial \boldsymbol{\Psi }_{\varepsilon }}{%
\partial t}dxdt \\
&=&-\int_{Q}\rho ^{\varepsilon }\frac{\partial \boldsymbol{u}^{\varepsilon }%
}{\partial t}\cdot \left( \frac{\partial \boldsymbol{\psi }_{0}}{\partial t}%
+\left( \frac{\partial \boldsymbol{\psi }_{1}}{\partial \tau }\right)
^{\varepsilon }+\varepsilon \left( \frac{\partial \boldsymbol{\psi }_{1}}{%
\partial t}\right) ^{\varepsilon }\right) dxdt
\end{eqnarray*}%
and 
\begin{equation*}
\frac{\partial \boldsymbol{\Psi }_{\varepsilon }}{\partial t}\rightarrow 
\frac{\partial \boldsymbol{\psi }_{0}}{\partial t}+\frac{\partial 
\boldsymbol{\psi }_{1}}{\partial \tau }\text{ in }L^{2}(Q)^{N}\text{-strong }%
\Sigma .
\end{equation*}%
Hence 
\begin{equation*}
\int_{Q}\rho ^{\varepsilon }\frac{\partial ^{2}\boldsymbol{u}^{\varepsilon }%
}{\partial t^{2}}\cdot \boldsymbol{\Psi }_{\varepsilon }dxdt\rightarrow
\iint_{Q\times \mathcal{K}}\widehat{\rho }\frac{\partial \boldsymbol{u}_{0}}{%
\partial t}\cdot \left( \frac{\partial \boldsymbol{\psi }_{0}}{\partial t}+%
\widehat{\frac{\partial \boldsymbol{\psi }_{1}}{\partial \tau }}\right)
dxdtd\beta
\end{equation*}%
where $\widehat{\rho }=\widehat{\chi }_{1}\widehat{\rho }_{1}+\widehat{\chi }%
_{2}\widehat{\rho }_{2}$. But $\int_{\mathcal{K}}\widehat{\rho }\frac{%
\partial \boldsymbol{u}_{0}}{\partial t}\cdot \widehat{\frac{\partial 
\boldsymbol{\psi }_{1}}{\partial \tau }}d\beta =0$ (as $\boldsymbol{u}_{0}$
does not depend on $\tau $, the microscopic temporal variable) and $%
\iint_{Q\times \mathcal{K}}\widehat{\rho }\frac{\partial \boldsymbol{u}_{0}}{%
\partial t}\cdot \frac{\partial \boldsymbol{\psi }_{0}}{\partial t}%
dxdtd\beta =\left( \int_{\mathcal{K}_{y}}\widehat{\rho }ds\right) \int_{Q}%
\frac{\partial \boldsymbol{u}_{0}}{\partial t}\cdot \frac{\partial 
\boldsymbol{\psi }_{0}}{\partial t}dxdt=-\left( \int_{\mathcal{K}_{y}}%
\widehat{\rho }ds\right) \int_{Q}\frac{\partial ^{2}\boldsymbol{u}_{0}}{%
\partial t^{2}}\cdot \boldsymbol{\psi }_{0}dxdt$. Hence, on letting $\rho
_{0}=\int_{\mathcal{K}_{y}}\widehat{\rho }ds$, we have, 
\begin{equation*}
\int_{Q}\rho ^{\varepsilon }\frac{\partial ^{2}\boldsymbol{u}^{\varepsilon }%
}{\partial t^{2}}\cdot \boldsymbol{\Psi }_{\varepsilon }dxdt\rightarrow \rho
_{0}\int_{Q}\frac{\partial ^{2}\boldsymbol{u}_{0}}{\partial t^{2}}\cdot 
\boldsymbol{\psi }_{0}dxdt.
\end{equation*}%
Now, dealing with the second term of the left-hand side of (\ref{5.9}), one
easily shows that 
\begin{equation}
\nabla \boldsymbol{\Psi }_{\varepsilon }\rightarrow \nabla \boldsymbol{\psi }%
_{0}+\nabla _{y}\boldsymbol{\psi }_{1}\text{ in }L^{2}(Q)^{N^{2}}\text{%
-strong }\Sigma .  \label{5.12}
\end{equation}%
Putting (\ref{5.12}) and (\ref{5.6}) together, we get by Corollary \ref%
{c2.1'} that 
\begin{equation*}
\chi _{1}^{\varepsilon }\nabla \boldsymbol{u}_{\varepsilon }\cdot \nabla 
\boldsymbol{\Psi }_{\varepsilon }\rightarrow \chi _{1}\left( \nabla 
\boldsymbol{u}_{0}+\nabla _{y}\boldsymbol{u}_{1}\right) \cdot \left( \nabla 
\boldsymbol{\psi }_{0}+\nabla _{y}\boldsymbol{\psi }_{1}\right) \text{ in }%
L^{2}(Q)\text{-weak }\Sigma \text{.}
\end{equation*}%
Hence, using $A_{0}$ as test function (recall that $A_{0}\in ((B_{\mathrm{AP}%
}^{2}(\mathbb{R}^{N})\cap L^{\infty }(\mathbb{R}^{N}))^{N^{2}}$ so that by 
\cite[Proposition 8]{DPDE}, it is an admissible test function in the sense
of \cite[Definition 5]{DPDE}) leads to 
\begin{equation*}
\int_{Q}\chi _{1}^{\varepsilon }A_{0}^{\varepsilon }\nabla \boldsymbol{u}%
_{\varepsilon }\cdot \nabla \boldsymbol{\Psi }_{\varepsilon }dxdt\rightarrow
\iint_{Q\times \mathcal{K}}\widehat{A}_{0}\mathbb{D}\boldsymbol{u}\cdot 
\mathbb{D}\boldsymbol{\Psi }dxdtd\beta
\end{equation*}%
where, setting $\boldsymbol{u}=(\boldsymbol{u}_{0},\boldsymbol{u}_{1})$, we
define $\mathbb{D}\boldsymbol{u}=(\mathbb{D}_{j}\boldsymbol{u})_{1\leq j\leq
N}$ with $\mathbb{D}_{j}\boldsymbol{u}=(\mathbb{D}_{j}\boldsymbol{u}%
^{k})_{1\leq k\leq N}$ and $\mathbb{D}_{j}\boldsymbol{u}^{k}=\frac{\partial
u_{0}^{k}}{\partial x_{j}}+\partial _{j}\widehat{u}_{1}^{k}$ ($\partial _{j}%
\widehat{u}_{1}^{k}=\mathcal{G}_{1}\left( \overline{\partial }%
u_{1}^{k}/\partial y_{j}\right) $) with $\boldsymbol{u}_{0}=(u_{0}^{k})_{1%
\leq k\leq N}$ (and the same definition for $\boldsymbol{u}_{1}$). Still for
the same term, let us now deal with the part involving convolution. First we
know that $A_{1}^{\varepsilon }\rightarrow A_{1}$ in $L^{p}(Q)^{N^{2}}$%
-strong $\Sigma $ ($1\leq p<\infty $). Hence by virtue of Theorem \ref{t2.6'}%
, we obtain 
\begin{equation*}
A_{1}^{\varepsilon }\ast \chi _{1}^{\varepsilon }\nabla \boldsymbol{u}%
_{\varepsilon }\rightarrow A_{1}\ast \ast \chi _{1}\left( \nabla \boldsymbol{%
u}_{0}+\nabla _{y}\boldsymbol{u}_{1}\right) \text{ in }L^{2}(Q)^{N^{2}}\text{%
-weak }\Sigma
\end{equation*}%
(just take $p=1$ above). Therefore by repeating the reasoning above, we
quickly arrive at 
\begin{equation*}
\int_{Q}\chi _{1}^{\varepsilon }(A_{1}^{\varepsilon }\ast \nabla \boldsymbol{%
u}_{\varepsilon })\cdot \nabla \boldsymbol{\Psi }_{\varepsilon
}dxdt\rightarrow \iint_{Q\times \mathcal{K}}\widehat{\chi }_{1}(\widehat{A}%
_{1}\ast \ast \mathbb{D}\boldsymbol{u})\cdot \mathbb{D}\boldsymbol{\Psi }%
dxdtd\beta .
\end{equation*}%
Thus 
\begin{equation*}
\int_{Q}\chi _{1}^{\varepsilon }\left( A_{0}^{\varepsilon }\nabla 
\boldsymbol{u}_{\varepsilon }+A_{1}^{\varepsilon }\ast \nabla \boldsymbol{u}%
_{\varepsilon }\right) \cdot \nabla \boldsymbol{\Psi }_{\varepsilon
}dxdt\rightarrow \iint_{Q\times \mathcal{K}}\widehat{\chi }_{1}(\widehat{A}%
_{0}\mathbb{D}\boldsymbol{u}+\widehat{A}_{1}\ast \ast \mathbb{D}\boldsymbol{%
u)}\cdot \mathbb{D}\boldsymbol{\Psi }dxdtd\beta .
\end{equation*}%
Arguing as for the preceding term and accounting of (\ref{5.8'}) and of the
fact that $\boldsymbol{\psi }_{1}$ does not depend on $\tau $ in $G_{2}$, we
get 
\begin{equation*}
\int_{Q}\chi _{2}^{\varepsilon }\left( B_{0}^{\varepsilon }\nabla \frac{%
\partial \boldsymbol{w}_{\varepsilon }}{\partial t}+B_{1}^{\varepsilon }\ast
\nabla \frac{\partial \boldsymbol{w}_{\varepsilon }}{\partial t}\right)
\cdot \nabla \boldsymbol{\Psi }_{\varepsilon }dxdt\rightarrow \iint_{Q\times 
\mathcal{K}}\widehat{\chi }_{2}(\widehat{B}_{0}\frac{\partial }{\partial t}%
\mathbb{D}\boldsymbol{u}+\widehat{B}_{1}\ast \ast \frac{\partial }{\partial t%
}\mathbb{D}\boldsymbol{u)}\cdot \mathbb{D}\boldsymbol{\Psi }dxdtd\beta .
\end{equation*}

As regards the terms with the pressure, one has 
\begin{eqnarray}
\int_{Q}\chi _{2}^{\varepsilon }p_{\varepsilon }\Div\boldsymbol{\Psi }%
_{\varepsilon }dxdt &=&\int_{Q}\chi _{2}^{\varepsilon }p_{\varepsilon }\Div%
\boldsymbol{\psi }_{0}dxdt+\int_{Q}\chi _{2}^{\varepsilon }p_{\varepsilon }(%
\Div_{y}\boldsymbol{\psi }_{1})^{\varepsilon }dxdt  \label{5.12'} \\
&&\ \ +\varepsilon \int_{Q}\chi _{2}^{\varepsilon }p_{\varepsilon }(\Div%
\boldsymbol{\psi }_{1})^{\varepsilon }dxdt.  \notag
\end{eqnarray}%
Owing to (\ref{2.3}), we consider $p\in L^{2}(Q;\mathcal{B}_{\mathrm{AP}%
}^{2}(\mathbb{R}^{N+1}))$ such that 
\begin{equation*}
\chi _{2}^{\varepsilon }p_{\varepsilon }\rightarrow \chi _{2}p\text{ in }%
L^{2}(Q)\text{-weak }\Sigma
\end{equation*}%
up to a subsequence of $E^{\prime }$ not relabeled. Set $p_{0}(x,t)=\int_{%
\mathcal{K}}\widehat{\chi }_{2}\widehat{p}(x,t,s,s_{0})d\beta $ for a.e. $%
(x,t)\in Q$. Then passing to the limit in (\ref{5.12'}) yields 
\begin{equation*}
\int_{Q}\chi _{2}^{\varepsilon }p_{\varepsilon }\Div\boldsymbol{\Psi }%
_{\varepsilon }dxdt\rightarrow \int_{Q}p_{0}\Div\boldsymbol{\psi }%
_{0}dxdt+\iint_{Q\times \mathcal{K}}\widehat{\chi }_{2}\widehat{p}\widehat{%
\Div}\widehat{\boldsymbol{\phi }}_{1}dxdtd\beta .
\end{equation*}%
It also holds that 
\begin{equation*}
\int_{Q}(\chi _{1}^{\varepsilon }\rho _{1}^{\varepsilon }\boldsymbol{f}+\chi
_{2}^{\varepsilon }\rho _{2}^{\varepsilon }\boldsymbol{g})\cdot \boldsymbol{%
\Psi }_{\varepsilon }dxdt\rightarrow \iint_{Q\times \mathcal{K}}(\widehat{%
\chi }_{1}\widehat{\rho }_{1}\boldsymbol{f}+\widehat{\chi }_{2}\widehat{\rho 
}_{2}\boldsymbol{g})\cdot \boldsymbol{\psi }_{0}dxdtd\beta .
\end{equation*}%
Finally, from the equation $\Div\frac{\partial \boldsymbol{w}_{\varepsilon }%
}{\partial t}=0$ in $\Omega _{2}^{\varepsilon }\times (0,T)$ which is
equivalent to $\chi _{2}^{\varepsilon }\Div\frac{\partial \boldsymbol{w}%
_{\varepsilon }}{\partial t}=0$ in $Q$, we get 
\begin{equation}
\iint_{Q\times \mathcal{K}_{y}}\widehat{\chi }_{2}\left( \Div\frac{\partial 
\boldsymbol{u}_{0}}{\partial t}+\widehat{\Div}\widehat{\frac{\partial 
\boldsymbol{u}_{1}}{\partial t}}\right) \widehat{\psi }dxdtds=0.
\label{5.13'}
\end{equation}%
Indeed, since $\boldsymbol{u}_{1}$ does not depend on $\tau $ in $G_{2}$, we
choose a test function $\psi \in \mathcal{C}_{0}^{\infty }(Q)\otimes \mathrm{%
AP}^{\infty }(\mathbb{R}_{y}^{N})$, and we have 
\begin{equation*}
\int_{Q}\chi _{2}^{\varepsilon }\psi ^{\varepsilon }\Div\frac{\partial 
\boldsymbol{w}_{\varepsilon }}{\partial t}dxdt=0.
\end{equation*}%
Passing to the limit above and using (\ref{5.8'}), we are led to (\ref{5.13'}%
).

We have just verified that the triplet $(\boldsymbol{u}_{0},\boldsymbol{u}%
_{1},p)$ determined earlier solves the variational problem 
\begin{equation}
\left\{ 
\begin{array}{l}
\rho _{0}\int_{Q}\frac{\partial ^{2}\boldsymbol{u}_{0}}{\partial t^{2}}\cdot 
\boldsymbol{\psi }_{0}dxdt+\iint_{Q\times \mathcal{K}}\widehat{\chi }_{1}(%
\widehat{A}_{0}\mathbb{D}\boldsymbol{u}+\widehat{A}_{1}\ast \ast \mathbb{D}%
\boldsymbol{u})\cdot \mathbb{D}\boldsymbol{\Psi }dxdtd\beta \\ 
\ \ +\iint_{Q\times \mathcal{K}}\widehat{\chi }_{2}(\widehat{B}_{0}\frac{%
\partial }{\partial t}\mathbb{D}\boldsymbol{u}+\widehat{B}_{1}\ast \ast 
\frac{\partial }{\partial t}\mathbb{D}\boldsymbol{u})\cdot \mathbb{D}%
\boldsymbol{\Psi }dxdtd\beta \\ 
\ \ \ -\int_{Q}p_{0}\Div\boldsymbol{\psi }_{0}dxdt-\iint_{Q\times \mathcal{K}%
}\widehat{\chi }_{2}\widehat{p}\widehat{\Div}\widehat{\boldsymbol{\psi }}%
_{1}dxdtd\beta =\int_{Q}\boldsymbol{F}\cdot \boldsymbol{\psi }_{0}dxdt, \\ 
\iint_{Q\times \mathcal{K}_{y}}\widehat{\chi }_{2}\left( \Div\frac{\partial 
\boldsymbol{u}_{0}}{\partial t}+\widehat{\Div}\widehat{\frac{\partial 
\boldsymbol{u}_{1}}{\partial t}}\right) \widehat{\psi }dxdtds=0 \\ 
\text{for all }(\boldsymbol{\psi }_{0},\boldsymbol{\psi }_{1})\in \mathcal{C}%
_{0}^{\infty }(Q)^{N}\times \lbrack \mathcal{C}_{0}^{\infty }(Q)\otimes 
\mathrm{AP}^{\infty }(\mathbb{R}^{N+1})]^{N}\text{ and }\psi \in \\ 
\mathcal{C}_{0}^{\infty }(Q)\otimes \mathrm{AP}^{\infty }(\mathbb{R}_{y}^{N})%
\end{array}%
\right.  \label{5.13}
\end{equation}%
where 
\begin{equation*}
\rho _{0}=\int_{\mathcal{K}_{y}}(\widehat{\chi }_{1}\widehat{\rho }_{1}+%
\widehat{\chi }_{2}\widehat{\rho }_{2})ds\text{ and }\boldsymbol{F}=\int_{%
\mathcal{K}_{y}}(\widehat{\chi }_{1}\widehat{\rho }_{1}\boldsymbol{f}+%
\widehat{\chi }_{2}\widehat{\rho }_{2}\boldsymbol{g})ds.
\end{equation*}

Taking successively $(\boldsymbol{\psi }_{0},\boldsymbol{\psi }_{1})=(%
\boldsymbol{\psi }_{0},0)$ and $(\boldsymbol{\psi }_{0},\boldsymbol{\psi }%
_{1})=(0,\boldsymbol{\psi }_{1})$ in (\ref{5.13}), we get the following
equivalent system: 
\begin{equation}
\rho _{0}\frac{\partial ^{2}\boldsymbol{u}_{0}}{\partial t^{2}}-\Div\left(
\chi _{1}(A_{0}\overline{\mathbb{D}}_{y}\boldsymbol{u}+A_{1}\ast \ast 
\overline{\mathbb{D}}_{y}\boldsymbol{u})+\chi _{2}\left( B_{0}\frac{\partial 
}{\partial t}\overline{\mathbb{D}}_{y}\boldsymbol{u}+B_{1}\ast \ast \frac{%
\partial }{\partial t}\overline{\mathbb{D}}_{y}\boldsymbol{u}\right) \right)
+\nabla p_{0}=\boldsymbol{F},  \label{5.14'}
\end{equation}%
\begin{equation}
-\overline{\Div}_{y}\left( \chi _{1}(A_{0}\overline{\mathbb{D}}_{y}%
\boldsymbol{u}+A_{1}\ast \ast \overline{\mathbb{D}}_{y}\boldsymbol{u})+\chi
_{2}\left( B_{0}\frac{\partial }{\partial t}\overline{\mathbb{D}}_{y}%
\boldsymbol{u}+B_{1}\ast \ast \frac{\partial }{\partial t}\overline{\mathbb{D%
}}_{y}\boldsymbol{u}\right) \right) +\overline{\nabla }_{y}(\chi _{2}p)=0,
\label{5.15'}
\end{equation}%
\begin{equation}
\chi _{2}\left( \Div\frac{\partial \boldsymbol{u}_{0}}{\partial t}+\overline{%
\Div}_{y}\frac{\partial \boldsymbol{u}_{1}}{\partial t}\right) =0,
\label{5.16'}
\end{equation}%
\begin{equation}
\boldsymbol{u}_{0}(x,0)=\frac{\partial \boldsymbol{u}_{0}}{\partial t}(x,0)=0
\label{5.17'}
\end{equation}%
where here, $\overline{\mathbb{D}}_{y}\boldsymbol{u}=\mathcal{G}_{1}^{-1}(%
\mathbb{D}\boldsymbol{u})=\nabla \boldsymbol{u}_{0}+\overline{\nabla }_{y}%
\boldsymbol{u}_{1}$.

\begin{lemma}
\label{l5.2}The system \emph{(\ref{5.14'})-(\ref{5.17'})} has a unique
solution.
\end{lemma}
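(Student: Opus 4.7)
The plan is to adapt to the homogenized system \eqref{5.14'}--\eqref{5.17'} the very same existence/uniqueness strategy that was used for Theorem \ref{t2.1}, now formulated on the product domain $Q\times \mathcal{K}$. First I would introduce the Hilbert space
\begin{equation*}
\mathbb{H}=H_{0}^{1}(\Omega)^{N}\times L^{2}(Q;\mathcal{B}_{\#\mathrm{AP}}^{1,2}(\mathbb{R}_{y}^{N})^{N})
\end{equation*}
together with its closed subspace
\begin{equation*}
\mathbb{W}=\bigl\{(\boldsymbol{u}_{0},\boldsymbol{u}_{1})\in \mathbb{H}:\widehat{\chi}_{2}\bigl(\Div \boldsymbol{u}_{0}+\widehat{\Div}\,\widehat{\boldsymbol{u}}_{1}\bigr)=0\text{ in }Q\times \mathcal{K}_{y}\bigr\},
\end{equation*}
which encodes the incompressibility condition \eqref{5.16'}. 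On $\mathbb{W}$ I would define the operators $\mathcal{A}(t)$ and $\mathcal{B}(t-\tau)$ corresponding respectively to the instantaneous elliptic part and the memory kernel appearing on the left-hand side of \eqref{5.13}. Coercivity of $\mathcal{A}$ follows from assumption (A1) together with the strict positivity $M(\widehat{\chi}_{j})>0$ provided by Lemma \ref{l5.1'}, while boundedness of $\mathcal{B}$ is immediate from $A_{1},B_{1}\in L^{\infty}$.

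With this framework in place, the system \eqref{5.14'}--\eqref{5.17'} can be recast as an abstract second-order integro-differential evolution in $\mathbb{W}$ of the form
\begin{equation*}
\rho_{0}\frac{d^{2}\boldsymbol{u}_{0}}{dt^{2}}+\mathcal{A}(t)(\boldsymbol{u}_{0},\boldsymbol{u}_{1})+\int_{0}^{t}\mathcal{B}(t-\tau)(\boldsymbol{u}_{0},\boldsymbol{u}_{1})(\tau)\,d\tau=\boldsymbol{F}(t),
\end{equation*}
with zero initial data, to which I would apply \cite[Theorem 3.4]{DGHL2} for the non-memory part and then \cite[Theorem 3.2]{Orlik} to absorb the bounded convolution perturbation, just as in the proof of Theorem \ref{t2.1}. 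This yields a unique pair $(\boldsymbol{u}_{0},\boldsymbol{u}_{1})\in L^{2}(0,T;\mathbb{W})$ whose second time-derivative lies in $L^{2}(0,T;\mathbb{W}')$. The pressure $p$ is then recovered as the Lagrange multiplier associated with the constraint defining $\mathbb{W}$, by a De Rham type argument adapted to $\mathcal{K}$-valued functions, following \cite[Section 3.2]{DGHL2}.

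For the uniqueness part alone, it suffices to work directly with the variational identity \eqref{5.13}: take the difference of two solutions, test against $(\partial_{t}\boldsymbol{u}_{0},\partial_{t}\boldsymbol{u}_{1})$, and reproduce the energy computation \eqref{2.5}--\eqref{2.10}. The convolution terms $\widehat{A}_{1}\ast\ast \mathbb{D}\boldsymbol{u}$ and $\widehat{B}_{1}\ast\ast \partial_{t}\mathbb{D}\boldsymbol{u}$ are controlled via Young's inequality exactly as in \eqref{2.8}--\eqref{2.9}, after which Gronwall's lemma forces the difference to vanish.

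The main obstacle will be the careful treatment of the pressure and the incompressibility constraint on the compound space $Q\times \mathcal{K}$: one must check that $\mathbb{W}$ has the density and closed-range properties required for the dual of the divergence operator to genuinely produce $p\in L^{2}(Q;\mathcal{B}_{\mathrm{AP}}^{2}(\mathbb{R}^{N+1}))$, unique modulo an additive constant on $\mathcal{K}$. Here the ergodicity of $\mathrm{AP}(\mathbb{R}^{N})$, already invoked after \eqref{5.8'} via \cite[Lemma 2.2]{Wright}, guarantees that the kernel of the combined gradient $(\nabla_{x},\overline{\nabla}_{y})$ on $\mathbb{W}$ consists only of constants, which is precisely what is needed to close the De Rham step and establish full well-posedness of the homogenized system.
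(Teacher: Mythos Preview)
Your proposal is correct and, in spirit, matches what the paper intends: the paper's own proof is a single line deferring to \cite[Lemma 5]{GM}, whose argument is precisely the abstract integro-differential well-posedness scheme (coercive instantaneous part plus bounded Volterra perturbation, then pressure via De Rham) that you have spelled out by transposing the proof of Theorem \ref{t2.1} to the product setting $Q\times\mathcal{K}$. One small slip: your space $\mathbb{H}$ should carry $L^{2}(\Omega;\cdot)$ rather than $L^{2}(Q;\cdot)$ in its second factor, since it is meant to be the instantaneous (spatial) configuration space before the time evolution is set up.
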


\begin{proof}
The proof follows the same lines of reasoning as the proof of \cite[Lemma 5]%
{GM}.
\end{proof}

Our aim now is to derive the effective equations for $\boldsymbol{u}_{0}$.
In order to do so, we need to consider firstly (\ref{5.15'}) and (\ref{5.16'}%
) which will help us to express $\boldsymbol{u}_{1}$ and $p$ in terms of $%
\boldsymbol{u}_{0}$. We shall seek $(\boldsymbol{u}_{1},p)$ under the form $%
\boldsymbol{u}_{1}=\boldsymbol{u}_{11}+\boldsymbol{u}_{12}$ where $%
\boldsymbol{u}_{11}$\ and $(\boldsymbol{u}_{12},p)$ solve respectively Eq. (%
\ref{5.18'}) and (\ref{5.19'}) below: 
\begin{equation}
-\overline{\Div}_{y}\left( \chi _{1}(A_{0}\overline{\mathbb{D}}_{y}%
\boldsymbol{u}+A_{1}\ast \ast \overline{\mathbb{D}}_{y}\boldsymbol{u}%
)\right) =0  \label{5.18'}
\end{equation}%
\begin{equation}
\left\{ 
\begin{array}{l}
-\overline{\Div}_{y}\left( \chi _{2}(B_{0}\frac{\partial }{\partial t}%
\overline{\mathbb{D}}_{y}\boldsymbol{u}+B_{1}\ast \ast \frac{\partial }{%
\partial t}\overline{\mathbb{D}}_{y}\boldsymbol{u})\right) +\overline{\nabla 
}_{y}(\chi _{2}p)=0 \\ 
\chi _{2}\left( \Div\frac{\partial \boldsymbol{u}_{0}}{\partial t}+\overline{%
\Div}_{y}\frac{\partial \boldsymbol{u}_{1}}{\partial t}\right) =0.%
\end{array}%
\right.  \label{5.19'}
\end{equation}

Eq. (\ref{5.18'}) allows us to find $\boldsymbol{u}_{1}$ in $G_{1}$ while (%
\ref{5.19'}) permits us to determine $\boldsymbol{u}_{1}$ in $G_{2}$ and $p$
in $G_{2}$, so that $\boldsymbol{u}_{11}=\chi _{1}\boldsymbol{u}_{1}$ and $%
\boldsymbol{u}_{12}=\chi _{2}\boldsymbol{u}_{1}$. Let us first deal with (%
\ref{5.18'}). For $\xi =(\xi _{ij})_{1\leq i,j\leq N}\in \mathbb{R}^{N^{2}}$
be fixed, consider the cell equation 
\begin{equation}
\left\{ 
\begin{array}{l}
\text{Find }u(\xi )\in \mathcal{B}_{\mathrm{AP}}^{2}(\mathbb{R}_{\tau };%
\mathcal{B}_{\#\mathrm{AP}}^{1,2}(\mathbb{R}_{y}^{N}))^{N}\text{ such that}
\\ 
-\overline{\Div}_{y}(A_{0}(\xi +\overline{\nabla }_{y}u(\xi ))+A_{1}\ast
\ast (\xi +\overline{\nabla }_{y}u(\xi )))=0\text{ in }G_{1} \\ 
u(\xi )=0\text{ in }G_{2}.%
\end{array}%
\right.  \label{5.20'}
\end{equation}%
In view of the assumptions on $A_{0}$ and $A_{1}$, we can verify, using \cite%
[Theorem 6.4]{Orlik}, that (\ref{5.20'}) possesses a unique solution. Taking
in (\ref{5.20'}) the special $\xi =\nabla \boldsymbol{u}_{0}(x,t)$ ($%
(x,t)\in Q$) and comparing the resulting equation with (\ref{5.18'}), we get
by the uniqueness of $u(\xi )$ that $\boldsymbol{u}_{1}(x,t)=u(\nabla 
\boldsymbol{u}_{0}(x,t))$ in $G_{1}$, which define a mapping $u(\nabla 
\boldsymbol{u}_{0}):(x,t)\mapsto u(\nabla \boldsymbol{u}_{0}(x,t))$ from $Q$
into $\mathcal{B}_{\mathrm{AP}}^{2}(\mathbb{R}_{\tau };\mathcal{B}_{\#%
\mathrm{AP}}^{1,2}(\mathbb{R}_{y}^{N}))^{N}$.

Now, as for (\ref{5.19'}), fix again $\xi =(\xi _{ij})_{1\leq i,j\leq N}$ in 
$\mathbb{R}^{N^{2}}$ and consider this time the following equation 
\begin{equation}
\left\{ 
\begin{array}{l}
-\overline{\Div}_{y}\left( B_{0}(\xi +\overline{\nabla }_{y}v(\xi
))+B_{1}\ast \ast (\xi +\overline{\nabla }_{y}v(\xi ))\right) +\overline{%
\nabla }_{y}\pi (\xi )=0\text{ in }G_{2} \\ 
\mathrm{tr}(\xi )+\overline{\Div}_{y}v(\xi )=0\text{ in }G_{2}\text{ and }%
v(\xi )=0\text{ in }G_{1}%
\end{array}%
\right.  \label{5.21'}
\end{equation}%
where $\mathrm{tr}(\xi )=\sum_{i=1}^{N}\xi _{ii}$ is the trace of $\xi $.
Let $\mathbb{B}_{\mathrm{AP}}^{1,2}(G_{2})$ denote the strong closure in $%
\mathcal{B}_{\#\mathrm{AP}}^{1,2}(\mathbb{R}^{N})^{N}$ of the set $\mathcal{V%
}_{2}=\{\psi \in (\mathcal{D}_{A}(\mathbb{R}^{N}))^{N}:M_{y}(\psi )=0$ and $%
\psi =0$ in $G_{1}\}$. Then following the standard methods for solving
compressible Stokes equations \cite[Chap. 1]{Temam}, it also follows from 
\cite[Theorem 6.4]{Orlik} that (\ref{5.21'}) possesses a unique solution $%
(v(\xi ),\pi (\xi ))\in \mathcal{B}_{\mathrm{AP}}^{2}(\mathbb{R}_{\tau };%
\mathbb{B}_{\mathrm{AP}}^{1,2}(G_{2}))\times \mathcal{B}_{\mathrm{AP}}^{2}(%
\mathbb{R}_{\tau };\mathcal{B}_{\mathrm{AP}}^{2}(\mathbb{R}^{N})/\mathbb{R})$%
. This means that $v(\xi )$ is unique up to an additive function $\zeta \in 
\mathcal{B}_{\mathrm{AP}}^{2}(\mathbb{R}_{\tau };\mathbb{B}_{\mathrm{AP}%
}^{1,2}(G_{2}))$ such that $\overline{\nabla }_{y}\zeta =0$ in $G_{2}$ and $%
\pi (\xi )$ is unique up to an additive function which is constant in $G_{2}$%
. Now, fixing $\xi =\frac{\partial }{\partial t}\nabla \boldsymbol{u}%
_{0}(x,t)$ ($(x,t)\in Q$) in (\ref{5.21'}) and repeating the same arguments
as above, we end up with 
\begin{equation*}
\boldsymbol{u}_{1}=v\left( \frac{\partial }{\partial t}\nabla \boldsymbol{u}%
_{0}\right) \text{ in }G_{2}\text{ and }p=\pi \left( \frac{\partial }{%
\partial t}\nabla \boldsymbol{u}_{0}\right) \text{ in }G_{2}
\end{equation*}%
where the functions $v\left( \frac{\partial }{\partial t}\nabla \boldsymbol{u%
}_{0}\right) $ and $\pi \left( \frac{\partial }{\partial t}\nabla 
\boldsymbol{u}_{0}\right) $ are defined in the same manner as $u\left(
\nabla \boldsymbol{u}_{0}\right) $. This being so the functions $\boldsymbol{%
u}_{1}$ and $p$ are therefore defined by 
\begin{equation*}
\boldsymbol{u}_{1}=\chi _{1}u(\nabla \boldsymbol{u}_{0})+\chi _{2}v\left( 
\frac{\partial }{\partial t}\nabla \boldsymbol{u}_{0}\right) \text{ and }%
p=\chi _{2}\pi \left( \frac{\partial }{\partial t}\nabla \boldsymbol{u}%
_{0}\right) .
\end{equation*}

We are now able to derive the effective equations for the equivalent medium.
This is the goal assigned to the next subsection.

\subsection{Homogenization result}

We begin by defining the effective coefficients. Let the matrices $%
C_{0}=(c_{0}^{ij})_{1\leq i,j\leq N}$ and $C_{1}=(c_{1}^{ij})_{1\leq i,j\leq
N}$ be defined as follows: for any $\xi =(\xi _{ij})_{1\leq i,j\leq N}$, 
\begin{eqnarray*}
C_{0}\xi &=&\int_{\mathcal{K}}\widehat{\chi }_{1}(\widehat{A}_{0}(\xi
+\partial \widehat{u(\xi )})+\widehat{A}_{1}\ast \ast (\xi +\partial 
\widehat{u(\xi )}))d\beta \\
C_{1}\xi &=&\int_{\mathcal{K}}\widehat{\chi }_{2}(\widehat{B}_{0}(\xi
+\partial \widehat{v(\xi )})+\widehat{B}_{1}\ast \ast (\xi +\partial 
\widehat{v(\xi )}))d\beta ,
\end{eqnarray*}%
and the function 
\begin{equation*}
h(\xi )=\int_{\mathcal{K}}\widehat{\chi }_{2}\widehat{\pi }(\xi )d\beta .
\end{equation*}%
Then by virtue of the uniqueness of $u(\xi )$, $v(\xi )$ and $\pi (\xi )$
(for a given $\xi $), $C_{0}$, $C_{1}$ and $h$ are well defined. Now,
substituting $\boldsymbol{u}_{1}$ by $\chi _{1}u(\nabla \boldsymbol{u}%
_{0})+\chi _{2}v\left( \frac{\partial }{\partial t}\nabla \boldsymbol{u}%
_{0}\right) $ and $p$ by $\chi _{2}\pi \left( \frac{\partial }{\partial t}%
\nabla \boldsymbol{u}_{0}\right) $ in (\ref{5.14'}) and taking into account (%
\ref{5.17'}), we get the model equation for our equivalent medium, viz. 
\begin{equation}
\left\{ 
\begin{array}{l}
\rho _{0}\frac{\partial ^{2}\boldsymbol{u}_{0}}{\partial t^{2}}-\Div\left(
C_{0}\nabla \boldsymbol{u}_{0}+C_{1}\nabla \frac{\partial \boldsymbol{u}_{0}%
}{\partial t}\right) +\nabla h\left( \nabla \frac{\partial \boldsymbol{u}_{0}%
}{\partial t}\right) =\boldsymbol{F}\text{ in }Q \\ 
\boldsymbol{u}_{0}=0\text{ on }\partial \Omega \times (0,T) \\ 
\frac{\partial \boldsymbol{u}_{0}}{\partial t}(x,0)=\boldsymbol{u}_{0}(x,0)=0%
\text{ in }\Omega .%
\end{array}%
\right.  \label{5.21}
\end{equation}%
The fact that (\ref{5.21}) possesses a unique solution is an easy
consequence of Lemma \ref{l5.2}.

We are therefore led to the homogenization result.

\begin{theorem}
\label{t5.1}Assume that \emph{(A1)-(A4)} hold. For each $\varepsilon >0$,
let $(\boldsymbol{u}_{\varepsilon },\boldsymbol{v}_{\varepsilon
},p_{\varepsilon })$ be the unique solution to \emph{(\ref{1.1})-(\ref{1.7})}%
. Let $\boldsymbol{u}^{\varepsilon }$ denote the global displacement field
defined by \emph{(\ref{5.3})}. Then the sequence $(\boldsymbol{u}%
^{\varepsilon })_{\varepsilon >0}$ strongly converges in $%
H^{1}(0,T;L^{2}(\Omega )^{N})$ to $\boldsymbol{u}_{0}$ where $\boldsymbol{u}%
_{0}$ is the unique solution to \emph{(\ref{5.21})}.
\end{theorem}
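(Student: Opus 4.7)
My plan is to collect and exploit the analysis already carried out in Subsections 5.1 and 5.2. Proposition \ref{p2.1} provides that the sequence $(\boldsymbol{u}^{\varepsilon})_{\varepsilon>0}$ is relatively compact in $H^{1}(0,T;L^{2}(\Omega)^{N})$. Extracting from an arbitrary sequence $E$ a subsequence $E'$, I secure simultaneously the strong $H^{1}(0,T;L^{2})$-convergence and the weak $H^{1}(0,T;H_{0}^{1})$-convergence of (\ref{5.4})--(\ref{5.5}), the weak $\Sigma$-convergence of $\nabla\boldsymbol{u}^{\varepsilon}$ in (\ref{5.5'}), and, by the uniform bound (\ref{2.3}) together with Theorem \ref{t2.2'}, a weak $\Sigma$-limit $p$ for $\chi_{2}^{\varepsilon}p_{\varepsilon}$. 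A crucial structural input will be the compatibility relation (\ref{5.8'}), which forces $\boldsymbol{u}_{1}$ to be independent of the microscopic time in $G_{2}$ and hence legitimizes the test-function class that allows the limit problem to be decoupled.

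Next I would insert the oscillating test function $\boldsymbol{\Psi}_{\varepsilon}=\boldsymbol{\psi}_{0}+\varepsilon\boldsymbol{\psi}_{1}^{\varepsilon}$ into the weak formulation (\ref{5.9}) and invoke the $\Sigma$-convergence toolkit developed in Section 4: Theorem \ref{t2.4'} and Corollary \ref{c2.1'} process the products involving the oscillating coefficients $A_{0}^{\varepsilon},B_{0}^{\varepsilon}$; Theorem \ref{t2.6'} and Corollary \ref{c4.2} process, respectively, the memory convolutions and the integration-in-time; Lemma \ref{l5.1} transfers each $\chi_{j}^{\varepsilon}$ into $\chi_{j}$ under the two-scale limit. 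Passing to the limit term by term produces the two-scale system (\ref{5.13}), which by choosing test functions of the form $(\boldsymbol{\psi}_{0},0)$ and $(0,\boldsymbol{\psi}_{1})$ separately specializes to the coupled strong formulation (\ref{5.14'})--(\ref{5.17'}).

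To conclude, I would use the cell problems (\ref{5.20'}) and (\ref{5.21'}) to write $\boldsymbol{u}_{1}=\chi_{1}u(\nabla\boldsymbol{u}_{0})+\chi_{2}v(\partial_{t}\nabla\boldsymbol{u}_{0})$ and $p=\chi_{2}\pi(\partial_{t}\nabla\boldsymbol{u}_{0})$; substituting these expressions back into (\ref{5.14'}) and using the definitions of $C_{0}$, $C_{1}$ and $h$ yields the macroscopic equation (\ref{5.21}). Uniqueness for (\ref{5.21}), itself an easy consequence of Lemma \ref{l5.2} applied to the difference of two putative solutions with zero data, shows that the limit $\boldsymbol{u}_{0}$ does not depend on the particular subsequence $E'$ extracted. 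Combining this with the relative compactness supplied by Proposition \ref{p2.1}, a standard Urysohn-type subsequence argument upgrades the subsequential convergence to convergence of the entire family $\boldsymbol{u}^{\varepsilon}\to\boldsymbol{u}_{0}$ in $H^{1}(0,T;L^{2}(\Omega)^{N})$, automatically strong thanks to the compactness.

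In my view the main obstacle is not the passage to the limit itself but ensuring the decoupling of (\ref{5.13}) into (\ref{5.14'})--(\ref{5.17'}). This hinges on the sharp compatibility (\ref{5.8'}), obtained via Corollary \ref{c4.2} and the ergodicity of $\mathrm{AP}(\mathbb{R}^{N})$ through \cite[Lemma 2.2]{Wright}; without it the class of admissible test functions $\boldsymbol{\psi}_{1}$ that are $\tau$-independent in $G_{2}$ would be too restrictive, the cell problem (\ref{5.21'}) for the fluid part could not be set up cleanly, and the identification of $p$ with $\chi_{2}\pi(\partial_{t}\nabla\boldsymbol{u}_{0})$ would break down.
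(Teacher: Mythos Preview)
Your proposal is correct and follows exactly the approach of the paper: the bulk of the work (compactness via Proposition~\ref{p2.1}, extraction of $\Sigma$-convergent subsequences, the compatibility relation (\ref{5.8'}), passage to the limit in (\ref{5.9}) to obtain (\ref{5.13}) and then (\ref{5.14'})--(\ref{5.17'}), and the cell-problem representation leading to (\ref{5.21})) is carried out in Subsections~5.1--5.2, and the paper's formal proof of Theorem~\ref{t5.1} is then the single observation you make at the end, namely that uniqueness of (\ref{5.21}) upgrades subsequential convergence to convergence of the whole family.
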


\begin{proof}
We just need to prove the convergence of the whole sequence $(\boldsymbol{u}%
^{\varepsilon })_{\varepsilon >0}$. But this is an obvious consequence of
the uniqueness of the solution to (\ref{5.21}).
\end{proof}

\begin{acknowledgement}
\emph{This work has been partly carried out at the Abdus Salam International
Centre for Theoretical Physics (ICTP) in the framework of the Associate and
Federation Schemes. The support of the ICTP is gratefully acknowledged.}
\end{acknowledgement}

\end{document}